\documentclass[reqno]{amsart}

\usepackage{enumerate,amsmath,amssymb}
\usepackage{graphicx}
\usepackage{lpic}
\usepackage{hyperref}
%\usepackage{soul, cancel}
%\usepackage{refcheck}

%%%%%%%%%%%%%%%%%%%%%%%%%%%%%%%%%%%%%%%%%%%%%%%%%%%%%%%%%%%%%%%%%

\theoremstyle{plain}
 
\newtheorem{thm}{Theorem}[section] 
 
\newtheorem{cor}[thm]{Corollary}
\newtheorem{corollary}[thm]{Corollary} 
\newtheorem{lem}[thm]{Lemma} 
\newtheorem{lemma}[thm]{Lemma}
\newtheorem{prop}[thm]{Proposition}

\newtheorem{conjecture}[thm]{Conjecture}

\theoremstyle{definition} 
\newtheorem{defn}[thm]{Definition}

\renewcommand{\epsilon}{\varepsilon}
\let\phi\varphi

\let\<\langle
\let\>\rangle

\DeclareMathAlphabet{\doba}{U}{msb}{m}{n}

\gdef\mR{\doba{R}}

\gdef\mZ{\doba{Z}}

\def\grad{{\mathop{\rm grad}}}
\def\scal{{\mathop{\rm scal}}}

\def\Spin{{\mathop{\rm Spin}}}
\def\SO{{\mathop{\rm SO}}}
\def\supp{{\mathop{\rm supp}}}
\def\vol{{\mathop{\rm vol}}}

\let\spec\Spec

\newcommand{\definedas}{\mathrel{\raise.095ex\hbox{\rm :}\mkern-5.2mu=}}
%\newcommand\blue[1]{{\color{blue}{#1}}}
%\newcommand\bluest[1]{\renewcommand{\CancelColor}{\color{blue}}\cancel{#1}}
%\newcommand\red[1]{{\bf \color{red}{#1}}}

%\newcommand\m[1]{{\color{blue} #1 \marginpar{ \color{red} \flushleft\sffamily\small !!}}}

% \newcounter{mnotecount}[section]
% \renewcommand{\themnotecount}{\thesection.\arabic{mnotecount}}
% \newcommand{\mnote}[1]{\protect{\stepcounter{mnotecount}}${\raisebox{0.5\baselineskip}[0pt]{\makebox[0pt][c]{\tiny\em{\color{red}$\bullet$\themnotecount}}}}$\marginpar{\raggedright\tiny\em $\!\!\!\!\!\!\,\bullet$\themnotecount: #1}\ignorespaces}

%\newcommand{\mnote}[1]{\marginpar{\tiny\em #1}}

%%%%%%%%%%%%%%%%%%%%%%%%%%%%%%%%%%%%%%%%%%
% Special for invconcordance
\def\Rinv{{\mathcal R}^{\rm inv}} 
\def\Rpsc{{\mathcal R}^{\rm psc}} 

\def\Ospin{\Omega^{\rm spin}} 
\def\Oinv{\Omega^{\rm inv}}

\def\rel{{ \, \rm{rel} \, }} 
\def\conc{{\widetilde{\pi}_0}} 
%%%%%%%%%%%%%%%%%%%%%%%%%%%%%%%%%%%%%%%%%%

%%%%%%%%%%%%%%%%%%%%%%%%%%%%%%%%%%%%%%%%%%%%%%%%%%%%%%%%%%%%%%%%%
\begin{document}
%%%%%%%%%%%%%%%%%%%%%%%%%%%%%%%%%%%%%%%%%%%%%%%%%%%%%%%%%%%%%%%%%

%%%%%%%%%%%%%%%%%%%%%%%%%%%%%%%%%%%%%%%%%%%%%%%%%%%%%%%%%%%%%%%%%
%\begin{center}
%\framebox{\framebox{
%\vbox{This is project {\color{red} \Project}\\
%Current version {\color{blue}\Version}, from
%{\color{blue}\Datum}, most recent changes by {\color{blue}\Person}.}
%}}
%\end{center}
%%%%%%%%%%%%%%%%%%%%%%%%%%%%%%%%%%%%%%%%%%%%%%%%%%%%%%%%%%%%%%%%%

\title 
[Invertible Dirac operators and handle attachments]
{Invertible Dirac operators and handle attachments on manifolds with
 boundary}

\author{Mattias Dahl} 
\address{Mattias Dahl \\ 
Institutionen f\"or Matematik \\
Kungliga Tekniska H\"ogskolan \\
100 44 Stockholm \\
Sweden}
\email{dahl@math.kth.se}

\author{Nadine Gro\ss e} 
\address{Nadine Gro\ss e \\ 
Mathematisches Institut \\ 
Universt\"at Leipzig \\
04009 Leipzig \\
Germany}
\email{grosse@math.uni-leipzig.de}

\subjclass[2000]{53C27, 57R65, 58J05, 58J50}
% 53C27 Spin and Spinc geometry 
% 57R65 Surgery and handlebodies 
% 58J05 Elliptic equations on manifolds, general theory
% 58J50 Spectral problems; spectral geometry; scattering theory

%\date{\today}

\keywords{spectrum of the Dirac operator, manifold with boundary, 
handle attachment, concordance of Riemannian metrics}

\begin{abstract}
For spin manifolds with boundary we consider Riemannian metrics which 
are product near the boundary and are such that the corresponding Dirac 
operator is invertible when half-infinite cylinders are attached at the 
boundary. The main result of this paper is that these properties of a 
metric can be preserved when the metric is extended over a handle of 
codimension at least two attached at the boundary. Applications of this 
result include the construction of non-isotopic metrics with invertible 
Dirac operator, and a concordance existence and classification theorem.
\end{abstract}

\maketitle

\tableofcontents

%%%%%%%%%%%%%%%%%%%%%%%%%%%%%%%%%%%%%%%%%%%%%%%%%%%%%%%%%%%%%%%%%
\section{Introduction}
%%%%%%%%%%%%%%%%%%%%%%%%%%%%%%%%%%%%%%%%%%%%%%%%%%%%%%%%%%%%%%%%%

Surgery constructions can be used in differential geometry to show 
that under certain assumptions a geometric structure on a manifold 
can be transported to another manifold by applying certain surgeries.
The most prominent examples are probably the results concerning 
positive scalar curvature, for an overview see \cite{rosenberg_stolz_01}. 
In \cite[Theorem~A]{gromov_lawson_80} Gromov and Lawson showed that if 
a closed manifold $M$ possesses a metric of positive scalar curvature,
then a manifold obtained from $M$ by a surgery of codimension $\geq 3$
also possesses a metric with positive scalar curvature. This was
generalized to handle attachments on manifolds with boundaries by 
Gajer in \cite{gajer_87} and Carr in \cite{carr_88}. 
Those results not only gave existence results for positive scalar
curvature but also allowed conclusions on the topology of the space of
all metrics of positive scalar curvature
$\Rpsc(M) \subset \mathcal{R}(M)$. For example in \cite{carr_88},
\cite[Chapter~4, Theorem~7.7]{lawson_michelsohn_89}, it is shown that 
that $\Rpsc(M)$ has infinitely many connected components if 
$\dim M = 4m - 1$, $m \geq 2$. This result is sharp in the sense that 
$\Rpsc(S^3)$ is connected, see \cite{marques_12}. There are also newer 
works that examine the higher homotopy groups of the moduli space of 
positive scalar curvature, see for example \cite{chernysh_04}, 
\cite{botvinnik_hanke_schick_walsh_10}.

If the manifold is spin, similar results can be obtained for metrics 
with invertible Dirac operator.

\begin{thm} \cite[Theorem~1.2]{ammann_dahl_humbert_09}
\label{am_da_hu_09} 
Let $(M,g)$ be a closed Riemannian spin manifold of dimension $n$ and 
let $\widetilde{M}$ be obtained from $M$ by surgery of dimension $k$
where $0 \leq k \leq n-2$. Then $\widetilde{M}$ carries a metric
$\tilde{g}$ for which $\dim \ker D^{\tilde{g}} \leq \dim \ker D^g$. 
\end{thm}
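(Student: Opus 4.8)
The plan is to reduce the statement to a local model computation by decomposing the surgery into elementary steps and controlling the Dirac spectrum along a suitable family of metrics degenerating near the surgery locus. First I would recall the geometric setup of surgery: the surgery on an embedded sphere $S^k \subset M$ with trivialized normal bundle replaces a tubular neighborhood $S^k \times D^{n-k}$ by $D^{k+1} \times S^{n-k-1}$, and the two pieces are glued along $S^k \times S^{n-k-1}$. The key is to build on $\widetilde{M}$ a metric $\tilde g_\epsilon$ which agrees with $g$ outside an $\epsilon$-neighborhood of the surgery sphere, and which on the neighborhood looks like a long thin torpedo-type metric on $D^{k+1}\times S^{n-k-1}(\epsilon)$, i.e.\ a product of a round sphere of radius $\epsilon$ with a fixed geometry on the disk factor, smoothly interpolated to $g$ near the boundary of the neighborhood.

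The central analytic step is a spectral estimate: one must show that for $\epsilon$ small the dimension of $\ker D^{\tilde g_\epsilon}$ does not exceed $\dim\ker D^g$. The mechanism is that the small sphere factor $S^{n-k-1}(\epsilon)$ of codimension $n-k \geq 2$ contributes eigenvalues of its Dirac operator that blow up like $1/\epsilon$, so on the handle region the Dirac operator of $\tilde g_\epsilon$ has no small eigenvalues except those forced by the disk/cylinder directions. Concretely I would write the Dirac operator on the neck in separated form $D \sim D_{\text{sphere}} \otimes 1 \pm 1 \otimes(\text{operator in remaining directions})$, use the lower bound $\|D_{S^{n-k-1}(\epsilon)}\psi\| \geq \tfrac{c}{\epsilon}\|\psi\|$ on the orthogonal complement of a suitable subspace, and combine this with a cut-off argument: any harmonic spinor for $\tilde g_\epsilon$, restricted and cut off appropriately, produces an almost-harmonic spinor for $g$ supported away from the surgery region, hence a genuine harmonic spinor for $g$ in the limit, giving an injection of the kernels. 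The bookkeeping for the limit $\epsilon \to 0$ is handled by a standard contradiction/compactness argument: if $\dim\ker D^{\tilde g_\epsilon} > \dim\ker D^g$ for a sequence $\epsilon_j \to 0$, take an $L^2$-orthonormal system of harmonic spinors, extract weak limits, and show the limit spinors are harmonic on $(M,g)$ and still linearly independent, contradicting the assumed dimension.

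I expect the main obstacle to be the gluing and cut-off analysis near the junction where the torpedo neck meets the unchanged part of $M$: one has to ensure the interpolating metric remains product-like enough that the separation-of-variables estimate on the neck is not destroyed by the transition region, and that the cut-off functions used to transplant spinors between $(M,g)$ and $(\widetilde M,\tilde g_\epsilon)$ have gradients controlled uniformly in $\epsilon$ (which forces the transition region to have a fixed, $\epsilon$-independent size while the sphere shrinks). A secondary technical point is keeping track of the spin structures: the trivialization of the normal bundle of $S^k$ must be chosen compatibly so that the spin structure extends over the trace of the surgery, and one should check the relevant bordism-theoretic compatibility so that the Dirac operators on the two sides are comparable. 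Once these points are in place, the estimate $\dim\ker D^{\tilde g} \leq \dim\ker D^g$ follows, and $\tilde g = \tilde g_\epsilon$ for $\epsilon$ sufficiently small is the desired metric.
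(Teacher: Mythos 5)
Your high-level strategy---degenerate the metric near the surgery sphere, get a uniform spectral lower bound on the ``neck'' coming from the sphere factor, then extract a limit by compactness and obtain a contradiction with $\ker D^g$---is indeed the strategy of the cited proof in \cite{ammann_dahl_humbert_09}. But two of the key technical mechanisms are missing or substituted by something that does not work as written, and these are precisely where the hypothesis $n-k\ge 2$ enters.

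First, the spectral bound on the neck. You propose to write $D$ in separated form $D_{\mathrm{sphere}}\otimes 1 \pm 1\otimes(\cdots)$ on $S^{n-k-1}(\epsilon)\times(\text{disk})$ and use $\|D_{S^{n-k-1}(\epsilon)}\psi\|\ge c\epsilon^{-1}\|\psi\|$. The metric you construct, however, is \emph{not} a product of a fixed small sphere with a fixed disk geometry: the sphere radius varies (from $\sim\epsilon$ in the handle to $\sim 1$ where it matches $g$) and one has a warped product, so the separation of variables you invoke is not available on the region where you need it. You flag this (``one has to ensure the interpolating metric remains product-like enough'') but propose no mechanism. The actual mechanism in \cite{ammann_dahl_humbert_09} is a conformal rescaling $\widetilde g_\rho = \alpha_\rho^{-2} g'_\rho$ that converts the whole warped neck into an \emph{exact} product with the unit sphere $S^{n-k-1}$, after which the uniform lower bound $\|D\psi\|\ge\frac{n-k-1}{2}\|\psi\|$ (nonzero because $n-k\ge 2$) applies on all of the neck and is independent of $\rho$. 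The key \emph{a priori} estimate ruling out concentration in the neck (the analogue of Lemma~\ref{fundamental_estimate_first_n} in this paper) lives in the conformally rescaled picture; without it, your contradiction argument cannot exclude that the normalized harmonic spinors concentrate entirely in the shrinking handle.

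Second, the removal of singularities. After extracting the limit $\phi$ of normalized harmonic spinors on $(\widetilde M,\widetilde g_{\epsilon_j})$ via local elliptic estimates and Ascoli's theorem, you only obtain $D^g\phi=0$ on $M\setminus S$, not on $M$: each compact exhaustion of $M\setminus S$ eventually misses the modified region, but $S$ itself is never reached. To upgrade this to $D^g\phi=0$ weakly on all of $M$ requires the removal-of-singularities lemma (Lemma~2.4 of \cite{ammann_dahl_humbert_09}, Lemma~\ref{removal_singularities} here), and it is exactly this step that uses $\operatorname{codim} S = n-k\ge 2$: the cut-off $\eta$ around $S$ at scale $\delta$ has $|\nabla\eta|\sim\delta^{-1}$, while $\operatorname{vol}(\operatorname{supp}\nabla\eta\cap\operatorname{supp}\psi)\sim\delta^{\,n-k}$, so the error term scales like $\delta^{(n-k)/2-1}\to 0$ only when $n-k\ge 2$. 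Your proposal says ``a genuine harmonic spinor for $g$ in the limit'' without supplying this argument; this is the single place in the whole proof where the codimension hypothesis is used, and omitting it leaves the argument incomplete. Relatedly, your suggestion that the transition region must have ``fixed, $\epsilon$-independent size'' to control cut-off gradients is the opposite of what happens: the modified region shrinks with $\epsilon$, and removal of singularities is what handles the resulting codimension-$(n-k)$ singular set in the limit.
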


If the codimension is $\geq 3$ this result is a special case of 
\cite[Theorem~1.2]{baer_dahl_02}. In the spirit of the generalization
of the surgery result for positive scalar curvature to manifolds with
boundary, the first author showed in \cite[Proposition~2.5]{dahl_08} 
that a metric with invertible Dirac operator on a closed spin manifold 
$M$ can be extended to a metric with invertible Dirac operator over 
the trace $W$ of a surgery of codimension $\geq 3$ on $M$.

From the Lichnerowicz formula it follows that every metric with 
positive scalar curvature on a closed spin manifold has invertible 
Dirac operator, that is $\Rpsc(M) \subset \Rinv(M) \subset 
\mathcal{R}(M)$. Thus, obstructions to invertibility of the Dirac 
operator give obstructions to the existence of positive scalar 
curvature. One of the main obstruction is given by the index of the 
Dirac operator which is equal to the $\alpha$-genus of the manifold, 
compare \cite{hitchin_74}.

The aim of this paper is to generalize Theorem~\ref{am_da_hu_09} to
handle attachments for manifolds with boundary. This will also be an
extension of \cite[Proposition~2.5]{dahl_08} to codimension $2$. For
that we have to fix a notion of invertibility for Dirac operators on
complete but non-compact manifolds. We will use the following
conventions. 

On a compact manifold with boundary we always assume that any
Riemannian metric has a product structure near the boundary. When
considering spectral properties of the Dirac operator we attach
half-infinite cylinders (with the natural product metric) at the
boundary and consider the Dirac operator acting on smooth
$L^2$-sections on the resulting complete manifold. 

Our goal is to show that if a metric $g$ with invertible Dirac
operator is given on a manifold with boundary $M$, then there is 
a metric $g''$ with invertible Dirac operator on the manifold $M''$ 
obtained by attaching a handle of codimension $\geq 2$ at the boundary 
of $M$ (see below). Moreover the metrics $g$ and $g''$ coincide outside 
an arbitrarily small neighbourhood of the handle attachment sphere.

For a compact spin manifold $M$ we denote by $\Rinv(M)$ the set of
metrics with invertible Dirac operator and product structure near the
boundary. 

We assume as given a compact Riemannian spin manifold with boundary
$(M,g)$, where $\dim M = n + 1$, and we assume that $D^g$ is
invertible. Let $\partial g \definedas g|_{\partial M}$ be the induced
metric on the boundary so that $g = \partial g + dt^2$ in a
neighbourhood of the boundary. The handle attachment (and the
corresponding surgery on the boundary) is specified by a 
spin-structure preserving embedding 
\[
f : S^k \times B^{n-k} \to \partial M .
\]
For $k \geq 2$ the spin structure on $S^k \times B^{n-k}$ is uniquely 
determined. But for $k=1$ there are two spin structures where we only 
allow the spin structure on $S^1$ which bounds a disks. Otherwise we 
cannot assure that the handle attachment will be compatible with the 
spin structures. From now on, when having a handle attachment for $k=1$ 
we always mean implicitly that $S^1$ is equipped with the bounding spin 
structure.

The image of $S^k \times \{ 0 \}$ is called the handle attachment
sphere and is denoted by $S$. In Figure~\ref{figure_intro_a} the
surgery sphere (a zero dimensional $S^0$) is represented by the
two dots in the boundary of $M$. 

\begin{figure}[h]
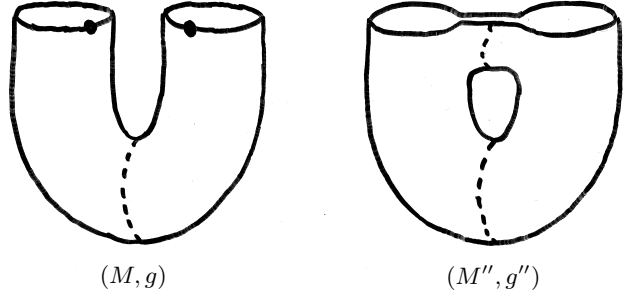

\begin{lpic}[]{FIGS/INTRO_A(0.33)}
\lbl[]{75,10;{\small $(M, g)$}}
\lbl[]{220,10;{\small$(M'', g'')$}}
\end{lpic}
\caption{Handle attachment.}
\label{figure_intro_a}
\end{figure}

More precisely (for pictures see the strategy described in
Section~\ref{strategy}), the manifold $M''_\infty$ will be obtained
from $M_\infty$ in the following way: $S^k \times B_{-}^{n-k+1}$ will
be embedded in $M_\infty$, where $B_{-}^{n-k+1}$ is the
$(n-k+1)$-dimensional half disk 
\[
B_{-}^{n-k+1} \definedas
\left\{ 
(x_1,\ldots, x_{n-k+1}) \mid \sum x_i^2 \leq 1, x_{n-k+1} \leq 0 
\right\}.
\]
The image of this embedding will be removed and replaced by
$B^{k+1} \times S_{-}^{n-k-1}$, where
$S_{-}^{n-k-1} \definedas S^{n-k-1} \cap B_{-}^{n-k} \cong B^{n-k-1}$
denotes the lower hemisphere. Restricting the handle attachment to the 
boundary $\partial M$ the embedded $S^k \times B^{n-k}$ is replaced by
$B^{k+1}\times S^{n-k-1}$. Thus, on the boundary we also have a
$k$-dimensional surgery.

\begin{thm} \label{thm_handle_attachment}
Let $M$ be a manifold with boundary and $g \in \Rinv(M)$. Let
$M''$ be obtained by a handle attachment as described above
where $n-k \geq 2$. Then for any given neighbourhood of the surgery
sphere there is a metric $g'' \in \Rinv(M'')$ such that $g'' = g$
outside that neighbourhood.
\end{thm}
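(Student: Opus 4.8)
The plan is to follow the surgery construction for closed manifolds from \cite{ammann_dahl_humbert_09}, together with its refinement in \cite{dahl_08}, and adapt it to the cylindrical-end setting; the new features to handle are the noncompact product ends and the borderline case of codimension~$2$. Throughout, $M_\infty$ denotes $M$ with half-cylinders attached, and ``$D^g$ invertible'' means $0 \notin \Spec(D^g)$ on $L^2(M_\infty)$. I would first record that this forces the boundary operator $D^{\partial g}$ to be invertible: on the cylindrical end $[0,\infty)\times\partial M$ one has $(D^g)^2 = -\partial_t^2 + (D^{\partial g})^2$, so if $0 \in \Spec(D^{\partial g})$ then $\Spec((D^g)^2) = [0,\infty)$ and $D^g$ is not invertible. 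Consequently $0$ lies at positive distance from $\Spec(D^g)$, with the essential spectrum confined to $\{|\lambda| \geq \min|\Spec(D^{\partial g})|\}$, so $\Rinv(M)$ is open with respect to suitable $C^1$-control of the metric, and small compactly supported deformations preserve invertibility.

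Next I would reduce to a standard local model near $S$. Since $S = f(S^k\times\{0\}) \subset \partial M$ and $g$ is a product near $\partial M$, a neighbourhood $U$ of $S$ in $M_\infty$ is diffeomorphic to $S^k \times B^{n-k} \times (-\delta,\delta)$, with the last factor the normal ($t$-) direction, $S$ sitting at the origin of $B^{n-k}\times\{0\}$. After shrinking $U$ and rescaling, $g|_U$ is $C^1$-close to the flat product $\sigma_k + g_{\mathrm{eucl}}$ for a fixed round metric $\sigma_k$ on $S^k$, so by openness of $\Rinv$ we may deform $g$ — keeping it fixed outside $U$ and keeping $D^g$ invertible on $M_\infty$ — until it equals this product model on a smaller neighbourhood of $S$. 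It is essential here that the whole deformation takes place inside the arbitrarily small prescribed neighbourhood of $S$.

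With the metric standardized near $S$, I would carry out the geometric construction sketched in Section~\ref{strategy}: replace the half-tubular neighbourhood $S^k\times B_-^{n-k+1}$ of $S$ inside $M_\infty$ by a long thin neck $S^k \times S^{n-k-1}(\rho) \times [0,L]$ capped off by the surgered-in piece $B^{k+1}\times S_-^{n-k-1}(\rho)$, producing $M''_\infty$ with a metric $g''_{\rho,L}$ that agrees with $g$ outside $U$. Because $n-k \geq 2$, the sphere $S^{n-k-1}(\rho)$ is a nonempty sphere of dimension $\geq 1$ whose Dirac eigenvalues blow up like $\rho^{-1}$ as $\rho \to 0$ — using the bounding spin structure, which for $n-k=2$ is forced since $S^{n-k-1}=S^1 = \partial B^{n-k}$ inherits the bounding structure from the surgery; it is precisely here that the codimension hypothesis, rather than the stronger codimension~$3$ needed for positive scalar curvature, enters. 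Thus the Dirac operators on the neck and on the surgered piece become uniformly invertible, with spectral gap $\to\infty$, as $\rho\to 0$.

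The analytic heart, and the main obstacle, is to show that for $\rho$ small and $L$ large the operator $D^{g''_{\rho,L}}$ is invertible on $L^2(M''_\infty)$. I would argue by contradiction: given sequences $\rho_i\to 0$, $L_i\to\infty$ and unit-norm spinors $\phi_i \in \ker D^{g''_{\rho_i,L_i}}$, analyse where the $L^2$-mass of $\phi_i$ concentrates. A Rellich-type estimate on the neck, exploiting the lower bound $\rho_i^{-1}$ on its tangential Dirac spectrum, shows that no mass survives on the neck; a companion estimate on $B^{k+1}\times S_-^{n-k-1}(\rho_i)$ (with the induced matching condition along the neck) rules out concentration on the surgered piece. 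Hence the mass concentrates on the fixed region $(M_\infty\setminus U', g)$; passing to a limit one obtains a nonzero $L^2$ harmonic spinor on $(M_\infty, g)$ — the deleted locus $S$ being removable since it has codimension $n+1-k \geq 3$ — contradicting invertibility of $D^g$. Care is needed because $M_\infty$ is noncompact: the cylindrical end must be built into the concentration and limiting analysis, the uniform spectral gap of $D^{\partial g}$ preventing escape of mass to infinity, and the convergence of operators must be organized over an exhausting family of compact pieces. Assembling these estimates produces $g'' \definedas g''_{\rho,L} \in \Rinv(M'')$ with $g''=g$ outside the prescribed neighbourhood of $S$, as required.
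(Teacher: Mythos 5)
Your proposal has the right general shape---standardize the metric near $S$, perform the surgery with a shrinking radius, show that a putative sequence of harmonic spinors cannot concentrate in the neck, pass to a limit, and invoke a removal-of-singularities lemma---which is indeed the skeleton of the paper's argument. But several steps are substituted by claims that do not hold or do not apply, and together they conceal exactly the parts that make the boundary case harder than the closed case.

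First, you try to get the product model near $S$ by a $C^1$-openness argument: ``$\Rinv(M)$ is open with respect to suitable $C^1$-control of the metric, and small compactly supported deformations preserve invertibility.'' Neither clause applies here. The deformation that takes $g$ to a metric equal to $h + \xi^{n-k} + dt^2$ on a tubular neighbourhood of $S$ extended all the way up the cylinder is \emph{not} compactly supported, since it necessarily changes the boundary metric $\partial g$ and hence the cylindrical end; moreover, it is not small in $C^1$, because the interpolation involves a cut-off with gradient of order $1/\delta$, so even as $\delta\to0$ the $C^1$-distance does not tend to zero. The paper instead proves Proposition~\ref{first_approx_prop} by a compactness/contradiction argument (using Lemma~\ref{lemmabdrygap} to control the boundary spectral gap uniformly in $\delta$, and Proposition~\ref{propbdrygap}~(3) to prevent escape of mass to infinity). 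You cannot shortcut this with a soft perturbation argument.

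Second, you collapse the entire handle attachment into a single surgery with a neck $S^k\times S^{n-k-1}(\rho)\times[0,L]$ capped by $B^{k+1}\times S_-^{n-k-1}(\rho)$. Quite apart from the dimension mismatch (these pieces have dimension $n$, while $M_\infty$ has dimension $n+1$), this one-step picture does not directly yield a metric which is a product near the \emph{new} boundary $\partial M''$---and ``product near the boundary'' is part of the definition of $\Rinv(M'')$. This is precisely why the paper splits the construction into two surgeries: a first surgery of codimension $n-k+1\geq 3$ performed entirely in the interior of the cylindrical region, followed by a second ``half-surgery'' of codimension $n-k\geq2$ that replaces a neighbourhood of the disc $B$ together with its infinite cylindrical extension $\{\overline F\}$. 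Only the second surgery changes the boundary, and it is arranged so that the result is manifestly a product near $\partial M''$. Your one-step description gives no mechanism to control the product structure at the new boundary, and it blurs which codimension ($n-k+1$ or $n-k$) is actually used where.

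Third, and most seriously, the surgery modifies the cylindrical end, hence modifies the essential spectrum, and your argument takes for granted that ``the uniform spectral gap of $D^{\partial g}$'' controls this. But $\partial g$ is the \emph{old} boundary metric; after the surgery the boundary is $(\partial M'',\partial g_\rho'')$, a different manifold with a family of metrics degenerating as $\rho\to0$. Establishing that $D^{\partial g_\rho''}$ has a spectral gap $(-\Lambda,\Lambda)$ uniformly in $\rho$ is a genuine theorem (Lemma~\ref{bdry_surgery_prop}), proved via a separate neck estimate (Lemma~\ref{fundamental_estimate_first}) and a removal-of-singularities argument on $\partial M$. Without it, one cannot even rule out $0$ from the essential spectrum of $D^{g''_\rho}$, let alone from the point spectrum.

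Finally, your removal-of-singularities step asserts that ``the deleted locus $S$'' has codimension $n+1-k\geq 3$ and is therefore removable. This localizes the singularity incorrectly. Because the construction perturbs the metric on a set that travels to infinity along the cylinder, the correct locus is (in Steps~1, 3, 4 of the paper) a $(k+1)$-dimensional submanifold $B\supset S\times[-t_0,\infty)$, of codimension $n-k$ in $M_\infty$. Lemma~\ref{removal_singularities} is formulated for precisely this non-compact locus, and its proof uses $n-k\geq2$ essentially. Citing codimension $\geq3$ for $S$ alone misidentifies both the set and the sharp hypothesis; in fact, this is the single place where the assumption $n-k\geq2$ enters most directly.

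In summary: the approach you outline is a plausible summary of what must be done, but the actual proof requires a four-step scheme (two metric approximations interlaced with two surgeries), uniform spectral gap control for a changing boundary, and a removal-of-singularities statement tailored to a locus that extends along the cylindrical end. Each of these is a nontrivial lemma that your proposal either replaces with an inapplicable general principle or omits outright.
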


Note, that the construction described above is exactly the
attachment of a $k$-handle at the boundary $\partial M$.

We indicate some applications of this result.

In Section~\ref{section_non-isotopic} we will show that the space 
$\Rinv(S^3)$ is---in contrast to $\Rpsc(S^3)$---not path-connected. 
More generally, let $(M, g\in \Rinv (M))$ be a closed $3$-dimensional 
Riemannian spin manifold. Then, in 
Proposition~\ref{non_concordant_metrics} we construct infinitely many 
non-concordant metrics with invertible Dirac operators that are pairwise 
bordant but not concordant. This generalizes a result from the first 
author \cite[Theorem~3.3]{dahl_08} to dimension $3$.

In Section~\ref{section_concordance} we discuss the concordance 
classification of metrics with invertible Dirac operator. These 
considerations are mainly based on the work of Stolz \cite{stolz_xx}. 
The case of invertible Dirac operators is easier than the positive 
scalar curvature case since we have larger range of handle attachments 
available, and we do not have to take the fundamental group into 
account, see Section \ref{Rngroups}. Hence, one motivation to examine 
the invertible Dirac operator case is that it gives a simplified 
illustration of this circle of ideas.

In Section~\ref{section_genericity} we will prove a genericity 
result. Proposition~\ref{open_dense} gives that if the subset 
$\Rinv(M \rel h)\subset \mathcal{R} (M \rel h)$ is 
non-empty, then it is open with respect to the $C^1$-topology and 
dense with respect to the $C^\infty$-topology. Here the notation 
$\rel h$ means that we only consider metrics with a fixed boundary 
metric $h \in \Rinv (\partial M)$. Thus, let a manifold with boundary 
$(M,g_M)$ be obtained by handle attachments of codimension $\geq 2$ 
from a manifold with boundary $(N, g_N\in \Rinv(N))$. Then the subset 
$\Rinv(M \rel g_M|_{\partial M})$ is generic (in the sense above) 
in $\mathcal{R} (M \rel g_M|_{\partial M})$, 
see Theorem~\ref{thm_generic_metrics}.

%%%%%%%%%%%%%%%%%%%%%%%%%%%%%%%%%%%%%%%%%%%%%%%%%%%%%%%%%%%%%%%%%
\subsection*{Acknowledgements}
%%%%%%%%%%%%%%%%%%%%%%%%%%%%%%%%%%%%%%%%%%%%%%%%%%%%%%%%%%%%%%%%%

The authors want to express their deep gratitude to B. Ammann, 
O. Andersson, and E. Humbert who have been much involved at earlier 
stages in this project and have contributed many helpful comments. 

%%%%%%%%%%%%%%%%%%%%%%%%%%%%%%%%%%%%%%%%%%%%%%%%%%%%%%%%%%%%%%%%%
\section{Preliminaries}
%%%%%%%%%%%%%%%%%%%%%%%%%%%%%%%%%%%%%%%%%%%%%%%%%%%%%%%%%%%%%%%%%

The following subsections contain material in preparation for the
proof of Theorem~\ref{thm_handle_attachment}.

%%%%%%%%%%%%%%%%%%%%%%%%%%%%%%%%%%%%%%%%%%%%%%%%%%%%%%%%%%%%%%%%%
\subsection{Notation}
%%%%%%%%%%%%%%%%%%%%%%%%%%%%%%%%%%%%%%%%%%%%%%%%%%%%%%%%%%%%%%%%%

The flat metric on $\mR^n$ is denoted by $\xi^n$, the round metric of
radius $1$ on $S^n$ is denoted by $\sigma^n$. Let $B^n(r)$ denote the 
$n$-dimensional ball of radius $r$, and let $B^n \definedas B^n(1)$. 
Let $S^n(r)$ denote the $n$-dimensional sphere of radius $r$, and let
$S^n \definedas S^n(1)$. 

The spinor bundle of a Riemannian spin manifold $(M,g)$ is denoted by
$\Sigma M$ (The construction of the spinor bundle and its dependence
on the Riemannian metric is discussed in
Section~\ref{compare_section}). The spinor bundle is a complex vector
bundle which is fiberwise equipped with a hermitian metric denoted by
$\< \cdot, \cdot \>_g$. Sections of the spinor bundle are called 
spinors. The space of all smooth spinors with compact support, denoted 
by $C_c^\infty(\Sigma M)$, carries a scalar product
\[
(\phi_1, \phi_2)_g
= \int_M \< \phi_1, \phi_2 \>_g \, dv^g ,
\quad \phi_1,\phi_2\in C_c^\infty(\Sigma M).
\]
The completion of $C_c^\infty(\Sigma M)$ with respect to this scalar
product is denoted by $L^2(\Sigma M, g)$. Let $D^g$ be the corresponding
classical Dirac operator. We denote by $H^1(\Sigma M, g)$ those
spinors $\phi \in L^2(\Sigma M, g)$ with 
$\Vert \nabla^g \phi \Vert_{L^2(\Sigma M, g)} + 
\Vert \phi \Vert_{L^2(\Sigma M, g)} < \infty$, where $\nabla^g$ is the
Levi-Civita connection lifted to the spinor bundle. 

In case the Riemannian metric, the underlying manifold or both are 
clear from the context we will abbreviate to $L^2(\Sigma M)$, $L^2(g)$ 
or simply $L^2$. The same will be done for $H^1(\Sigma M, g)$.

%%%%%%%%%%%%%%%%%%%%%%%%%%%%%%%%%%%%%%%%%%%%%%%%%%%%%%%%%%%%%%%%%
\subsection{Spectral theory for manifolds with cylindrical ends}
%%%%%%%%%%%%%%%%%%%%%%%%%%%%%%%%%%%%%%%%%%%%%%%%%%%%%%%%%%%%%%%%%

Before specializing to manifolds with cylindrical ends we review some
facts on the spectrum on complete manifolds as can be found for example
in \cite{baer_05}.

Let $(M,g)$ be a complete Riemannian spin manifold with Dirac operator
$D^g$. Let $\lambda \in \mathbb{C}$ be in the spectrum of $D^g$.
Then $\lambda$ is real and at least one of the following holds:
\begin{itemize}
\item 
$\lambda$ is an eigenvalue, which means that there is a nonzero spinor
$\phi \in C^\infty(\Sigma M) \cap L^2(\Sigma M)$ with
$D^g\phi = \lambda \phi$. Such a spinor $\phi$ is called an eigenspinor 
corresponding to $\lambda$. The set of all eigenspinors to $\lambda$ is 
the eigenspace of $\lambda$. Eigenspinors to different eigenvalues are 
orthogonal in $L^2(\Sigma M)$.
\item 
$\lambda$ is in the essential spectrum, this means that there is a
sequence $\phi_j$ of compactly supported smooth spinors which are
orthonormal in $L^2(\Sigma M, g)$ and satisfy 
$\Vert ( D - \lambda)\phi_j \Vert_{L^2(\Sigma M, g)} \to 0$ as 
$j \to \infty$. 
\end{itemize}
In particular, $\lambda$ could be both an eigenvalue and an element in
the essential spectrum, for example if it is an eigenvalue for which the 
corresponding eigenspace is infinite-dimensional.

Moreover, the following decomposition principle holds for the
essential spectrum. Let $M_1$ and $M_2$ be two complete Riemannian
spin manifolds and $K_i \subset M_i$ be compact subsets such that
$M_1 \setminus K_1$ and $M_2 \setminus K_2$ are isometric. Then the
essential spectrum of the Dirac operators on $M_1$ and $M_2$ are the
same.

If $M$ is closed, then the spectrum consists only of eigenvalues with
finite multiplicities.

Next we state properties of the spectrum on manifolds with
cylindrical ends. To be more precise, the manifold $M$ is assumed to
have a neighbourhood $\partial M \times (-t_0, 0]$ of its boundary
where the metric has the product form $\partial g + dt^2$. Let 
$M_\infty$ be the manifold $M$ with half-infinite cylindrical ends
attached,
\[ 
(M_\infty,g) 
\definedas 
(M,g) \cup_{\partial M} 
(\partial M \times [0,\infty), \partial g + dt^2 ).
\]
By a slight abuse of notation we use the same symbol $g$ for the 
metric on $M$ and on $M_\infty$. Note that $(M_\infty,g)$ contains a
cylindrical part 
$(\partial M \times (-t_0,\infty), \partial g + dt^2 )$.

The Dirac operator $D^g$ has a self-adjoint extension to a bounded
operator 
\[
D^g : H^1(\Sigma M_{\infty}) \to L^2(\Sigma M_{\infty}) ,
\]
see \cite[Section~3.6.2]{bleecker_booss-bavnbek_04}. This operator is
invertible with a bounded inverse if and only if it has a spectral gap
$(-\lambda,\lambda)$ around zero, that is if there is $\lambda > 0$
such that $\Vert D^g \phi \Vert_{L^2} \geq 
\lambda \Vert \phi \Vert_{L^2}$ for all 
$\phi \in H^1(\Sigma M_{\infty})$. We call the spectral gap maximal if 
$\lambda$ or $-\lambda$ is in the spectrum of $D^g$.

The norms $\Vert \phi \Vert_{H^1(g)}$ and 
$\Vert \phi\Vert_{L^2(g)}+\Vert D^g\phi\Vert_{L^2(g)}$ are
equivalent on cylindrical manifolds which follows from the
corresponding statement on compact manifolds, see for example
\cite[Corollary~3.2.4]{ammann_habil}. 

The boundary manifold $(\partial M, \partial g)$ is a compact manifold 
without boundary which carries an induced spin structure 
\cite[Page~200]{milnor_63}. Thus, the Dirac operator $D^{\partial g}$
is a self-adjoint operator 
$H^1(\Sigma \partial M) \to L^2(\Sigma \partial M)$ with discrete
spectrum. 

\begin{prop} \label{propbdrygap}
Assume that $D^{\partial g}$ has a maximal spectral gap
$(-\Lambda, \Lambda)$, $\Lambda > 0$, around zero. That is,
assume 
$ \Vert D^{\partial g} \phi \Vert_{L^2} \geq \Lambda \Vert \phi \Vert_{L^2}$ 
for all $\phi \in H^1(\Sigma \partial M)$. Then
\begin{enumerate}
\item 
\cite[Lemma~3.20]{bleecker_booss-bavnbek_04}
\cite[Section~4]{mueller_94}
in the interval $(-\Lambda, \Lambda)$ the spectrum of 
$D^g : H^1(\Sigma M_{\infty}) \to L^2(\Sigma M_{\infty})$ consists of
finitely many eigenvalues of finite multiplicity. Further, the
essential spectrum of $D^g$ is equal to 
$(-\infty,-\Lambda] \cup [\Lambda,\infty)$.
\item 
\cite[Lemma~3.21]{bleecker_booss-bavnbek_04}
\cite[Section~4]{mueller_94}
any eigenspinor of $D^g$ on $M_{\infty}$ to the eigenvalue $\lambda$
decays exponentially with 
rate $\sqrt{\Lambda^2-\lambda^2}$ on the cylindrical end,
\item 
In particular, for the exponential decay of an eigenspinor
$\phi$ corresponding to an eigenvalue $\lambda$ with 
$\Lambda^2 \geq \lambda^2$ it holds that 
\[ 
\int_{\partial M\times \{ t_1\}} |\phi|^2 \, dv^{\partial g} 
\leq 
2^{-1} (\Lambda^2 - \lambda^2)^{-1/2} 
e^{-2\sqrt{\Lambda^2-\lambda^2}(t_1 - 1)}
\int_{\partial M\times [0,1]} |\phi|^2 \, dv^g
\]
for $0 \leq t_1$.
\end{enumerate}
\end{prop}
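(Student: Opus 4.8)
The plan is to reduce everything to the cylindrical end of $M_\infty$ and to re-run, with explicit constants, the mechanism behind part~(2); indeed (3) is just a quantitative form of (2) followed by an integration. First I would use that on the cylinder $\partial M\times[0,\infty)\subset M_\infty$ the metric is the product $\partial g+dt^2$, so $D^g$ takes the product form $D^g=c(\partial_t)(\partial_t+A)$, where $A$ is the self-adjoint tangential Dirac-type operator on $(\partial M,\partial g)$, with $c(\partial_t)^2=-1$, $c(\partial_t)^\ast=-c(\partial_t)$, and $\{c(\partial_t),A\}=0$ (the anticommutation being forced by self-adjointness of $D^g$). Since $A^2=(D^{\partial g})^2$, the assumed gap gives $A^2\ge\Lambda^2$. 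Multiplying $D^g\phi=\lambda\phi$ by $-c(\partial_t)$ on the cylinder turns the eigenvalue equation into the first-order evolution equation $\partial_t\phi=-(A+\lambda c(\partial_t))\phi$.

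Next I would decompose $\phi(t,\cdot)$ into eigenspinors of $A$. As $c(\partial_t)$ anticommutes with $A$ and squares to $-1$, the space $L^2(\Sigma\partial M)$ splits orthogonally into two-dimensional $c(\partial_t)$-invariant blocks, each spanned by a unit eigenspinor $\psi_\mu$ with $A\psi_\mu=\mu\psi_\mu$, $\mu\ge\Lambda>0$, and by $c(\partial_t)\psi_\mu$, which is an $A$-eigenspinor for $-\mu$. Projecting the evolution equation onto such a block yields a $2\times2$ linear ODE in $t$ whose coefficient matrix has eigenvalues $\pm\sqrt{\mu^2-\lambda^2}$; since $\phi\in L^2(\Sigma M_\infty)$, the growing exponential must be absent, so the block component of $\phi$ equals $e^{-\alpha_\mu t}$ times its value at $t=0$, where $\alpha_\mu\definedas\sqrt{\mu^2-\lambda^2}$; note $\alpha_\mu\ge\alpha\definedas\sqrt{\Lambda^2-\lambda^2}$. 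Hence, with nonnegative coefficients $c_\mu$,
\[
f(t)\definedas\int_{\partial M\times\{t\}}|\phi|^2\,dv^{\partial g}=\sum_\mu c_\mu e^{-2\alpha_\mu t},
\]
and in particular $f$ is non-increasing. This reproves part~(2) with explicit rates.

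Finally I would integrate. On the cylinder $dv^g=dv^{\partial g}\,dt$, so $\int_{\partial M\times[0,1]}|\phi|^2\,dv^g=\int_0^1 f(s)\,ds=\sum_\mu c_\mu\,\frac{1-e^{-2\alpha_\mu}}{2\alpha_\mu}$. Since $c_\mu\ge0$, it suffices to compare the two sides of the asserted inequality mode by mode. For $t_1\ge1$ this amounts to checking $4\alpha\alpha_\mu\,e^{2(\alpha-\alpha_\mu)t_1-2\alpha}\le1-e^{-2\alpha_\mu}$; using $\alpha\le\alpha_\mu$ and $t_1\ge1$ the left-hand side is at most $4\alpha_\mu^2 e^{-2\alpha_\mu}$, and the inequality then reduces to the elementary bound $4x^2\le e^{2x}-1$ for $x>0$ (immediate from the sign of the derivative). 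Summing against the weights $c_\mu$ gives the estimate.

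The main obstacle will be the constant rather than the structure: the mode-by-mode comparison is comfortable for $t_1\ge1$, while for $0\le t_1<1$ a separate (and genuinely more delicate) argument is needed, and one must be careful to quote the product form $D^g=c(\partial_t)(\partial_t+A)$ and the identity $A^2=(D^{\partial g})^2$ correctly for both parities of $\dim\partial M$; everything else is routine bookkeeping with the orthogonal block decomposition. An alternative to the mode decomposition is to observe that $f$ is nonnegative, bounded, and---with some care in tracking the first-order term---satisfies $f''\ge4\alpha^2 f$ on $[0,\infty)$, and then to invoke a maximum-principle comparison with $e^{-2\alpha t}$; but pinning down the sharp rate and constant by this route is no easier.
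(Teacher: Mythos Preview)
Your mode-decomposition argument is correct for $t_1 \geq 1$ but takes a different route from the paper. The paper works directly with $l(t)\definedas\bigl(\int_{\partial M_t}|\phi|^2\,dv^{\partial g}\bigr)^{1/2}$ and never separates variables: from the identity $(D^g)^2 = (D^{\partial g})^2 - (\nabla^g_{\partial_t})^2$ on the cylinder together with Cauchy--Schwarz it derives the second-order inequality $l'' \geq (\Lambda^2-\lambda^2)\,l$; an integration of the eigenvalue equation over $\partial M\times[T,\infty)$ gives $l' \leq 0$; combined with $l\to 0$ at infinity these force $l' \leq -\sqrt{\Lambda^2-\lambda^2}\,l$, hence $l(t) \geq e^{\sqrt{\Lambda^2-\lambda^2}\,(t_1-t)}l(t_1)$ for $t\leq t_1$. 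Integrating this pointwise bound over $t\in[0,1]$ and applying Cauchy--Schwarz produces the stated constant. In other words, the paper follows precisely the differential-inequality route you describe as the ``alternative'', carried out on $l$ rather than on $f=l^2$, and no spectral decomposition of the tangential operator enters at all. Your approach yields finer information (the explicit per-mode rates $\alpha_\mu$) at the price of quoting the product form of $D^g$ and its block structure in both parities; the paper's ODE argument is parity-independent and shorter. Regarding your caveat about $0\leq t_1<1$: the paper's integration step likewise only covers $t_1\geq 1$, and only this range is used in the subsequent applications.
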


We need \textit{(3)} as a quantitative version of \textit{(2)}. 
It shows that the decay rate only depends on $\lambda, \Lambda$ 
but not on $g$.

\begin{proof}[Proof of Part (3)] 
We differentiate 
$l(t)^2 \definedas \int_{\partial M_t} |\phi|^2 \, dv^{\partial g}$ 
where $\partial M_t \definedas \partial M\times \{t\}$ and we obtain 
$l'(t) l(t) = \int_{\partial M_t} 
\< \nabla^g_{\partial_t} \phi,\phi \> \, dv^{\partial g}$. Here and in the 
rest of the proof, $\<.,.\>$ denotes the real part of the hermitian 
scalar product. Differentiating again and using the Cauchy-Schwarz 
inequality, we get
\[ \begin{split}
l''(t)l(t) + l'(t)^2 
&= 
\int_{\partial M_t} 
\< \nabla^g_{\partial_t}\phi,\nabla^g_{\partial_t} \phi \> 
\, dv^{\partial g} 
+ \int_{\partial M_t} 
\< ( \nabla^g_{\partial_t} )^2 \phi, \phi \> 
\, dv^{\partial g} \\ 
&\geq 
\frac{
\left( \int_{\partial M_t} \< \nabla^g_{\partial_t} \phi, \phi \> 
\,dv^{\partial g} \right)^2} 
{\int_{\partial M_t} | \phi |^2 \, dv^{\partial g}}
+ 
\int_{\partial M_t} 
\< ( \nabla^g_{\partial_t} )^2 \phi, \phi \> 
\, dv^{\partial g}, 
\end{split} \]
and, thus,
\[
l''(t) l(t) \geq 
\int_{\partial M_t} \< ( \nabla^g_{\partial_t} )^2 \phi, \phi \> 
\, dv^{\partial g}.
\]
Using the Schr\"odinger-Lichnerowicz formula and 
$\scal^g = \scal^{\partial g}$ we write the square of the Dirac
operator $D^g$ on the cylinder as 
\[
(D^g)^2 
= ( D^{\partial g} )^2 + (\nabla^g_{\partial_t})^*\nabla^g_{\partial_t}
= ( D^{\partial g} )^2 - (\nabla^g_{\partial_t})^2 .
\] 
We obtain 
\[ \begin{split}
\lambda^2 l(t)^2
&= 
\int_{\partial M_t} \< (D^g)^2 \phi, \phi \> \, dv^{\partial g} \\ 
&=
\int_{\partial M_t} |D^{\partial g}\phi|^2 \, dv^{\partial g} 
-
\int_{\partial M_t} 
\< (\nabla^g_{\partial_t})^2 \phi, \phi \> 
\, dv^{\partial g} \\ 
&\geq 
\Lambda^2 l(t)^2 - l''(t)l(t). 
\end{split} \]
Thus, $l''(t) \geq (\Lambda^2 - \lambda^2) l(t)$. Note that $l(t)>0$ 
for all $t$, since the zero set of an eigenspinor has zero $n-1$ 
Hausdorff measure, see \cite{baer_99}. Next we will show
that $l' \leq 0$. Since $\int_0^\infty l(t)^2 \, dt < \infty$ we have 
\[
\lim_{t \to \infty} 
\int_{\partial M_t} \< \nabla^g_{\partial_t} \phi, \phi \> \, dv^{\partial g}
=
\lim_{t \to \infty} l'(t)l(t)
= 
\lim_{t\to\infty} \left( \frac{1}{2} l(t)^2\right)'
= 0, 
\]
which is used in the third step of the following computation.
\[ \begin{split}
\lambda^2 \int_T^\infty l(t)^2 \, dt
&= 
\int_T^\infty \int_{\partial M_t} \< (D^g)^2 \phi, \phi \> 
\, dv^{\partial g} dt \\ 
&= 
\int_T^\infty \int_{\partial M_t} 
\Big(
\< (D^{\partial g})^2 \phi, \phi \> 
- 
\<(\nabla^g_{\partial_t})^2 \phi,\phi\> 
\Big)
\, dv^{\partial g} dt \\ 
&= 
\int_T^\infty \int_{\partial M_t} 
\left( |D^{\partial g} \phi|^2 + |\nabla^g_{\partial_t} \phi|^2 \right)
\, dv^{\partial g} dt
+ 
\int_{\partial M_T} \< \nabla^g_{\partial_t}\phi, \phi \> 
\, dv^{\partial g} , \\ 
&\geq
\Lambda^2 \int_T^\infty l(t)^2 \, dt + l'(T)l(T).
\end{split} \]
Since $\Lambda^2 \geq \lambda^2$ we conclude that $l'(T) \leq 0$. 
Together with $l''(t) \geq (\Lambda^2 - \lambda^2) l(t)$ we have
\[ 
l(t) \geq e^{\sqrt{\Lambda^2-\lambda^2}(t_1-t)} l(t_1)
\]
for $t \geq 0$. Integrating and applying the Cauchy-Schwarz
inequality we get 
\[ \begin{split}
l(t_1)
&\leq 
\int_0^1 l(t) e^{-\sqrt{\Lambda^2-\lambda^2} (t_1-t)} \, dt \\
&\leq
\left( \int_0^1 l(t)^2 \, dt \right)^{1/2}
\left( \int_0^1 e^{-2\sqrt{\Lambda^2 - \lambda^2} (t_1 - t)} 
\, dt \right)^{1/2} \\ 
&\leq
{2}^{-1/2}(\Lambda^2 - \lambda^2)^{-1/4}
e^{-\sqrt{\Lambda^2-\lambda^2}(t_1-1)}
\left( \int_0^1 l(t)^2 \, dt \right)^{\frac{1}{2}}.
\end{split} \]
\end{proof}

\begin{lemma} \label{C^2-estimate}
Let $(M,g)$ be a Riemannian manifold, let $K \subset M$ be a compact
subset, and let $\Lambda > 0$. Then there is a constant 
$C = C(K,M,g,\Lambda)$ such that
\[ 
\Vert \phi \Vert_{C^2(K,g)} \leq C \Vert \phi \Vert_{L^2(M,g)}
\]
for any spinor $\phi$ on $(M,g)$ satisfying $D^g \phi = \lambda \phi$
where $|\lambda| < \Lambda$.
\end{lemma}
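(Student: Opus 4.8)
The plan is to combine interior elliptic estimates for the Dirac operator with a bootstrap argument. The only way $\Lambda$ enters is through the eigenvalue equation: $D^g\phi=\lambda\phi$ lets us replace each occurrence of $D^g\phi$ on the right-hand side of an elliptic estimate by $\lambda\phi$, at the cost of a factor $|\lambda|\le\Lambda$, so that after finitely many iterations the right-hand side only involves $\Vert\phi\Vert_{L^2}$.

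First I would fix an integer $N$ with $N>2+\tfrac12\dim M$, so that the Sobolev embedding $H^N\hookrightarrow C^2$ holds, and choose nested compact subsets $K=K_0\subset K_1\subset\cdots\subset K_N\subset M$ with $K_j\subset\operatorname{int}K_{j+1}$ for each $j$. Covering $K_N$ by finitely many coordinate charts over which $\Sigma M$ trivializes and applying the standard local elliptic estimate for the first-order elliptic operator $D^g$ in each chart, one obtains for every $j\in\{0,\dots,N-1\}$ and every integer $s\ge 0$ a constant $C_j=C_j(K_j,K_{j+1},g,s)$ with
\[
\Vert\phi\Vert_{H^{s+1}(K_j)}\le C_j\big(\Vert D^g\phi\Vert_{H^s(K_{j+1})}+\Vert\phi\Vert_{L^2(K_{j+1})}\big)
\]
for all smooth spinors $\phi$ on $M$. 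Only the geometry of $g$ on the compact set $K_N$ enters here, and since we iterate at most $N$ times the dependence of $C_j$ on $s$ is harmless.

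Next I would run the bootstrap. If $D^g\phi=\lambda\phi$ with $|\lambda|<\Lambda$, then $\Vert D^g\phi\Vert_{H^s}=|\lambda|\,\Vert\phi\Vert_{H^s}\le\Lambda\Vert\phi\Vert_{H^s}$, so the estimate above becomes $\Vert\phi\Vert_{H^{s+1}(K_j)}\le C_j(\Lambda+1)\Vert\phi\Vert_{H^s(K_{j+1})}$. Applying this successively with $(j,s)=(0,N-1),(1,N-2),\dots,(N-1,0)$ and then using $\Vert\phi\Vert_{L^2(K_N)}\le\Vert\phi\Vert_{L^2(M)}$ yields
\[
\Vert\phi\Vert_{H^N(K)}\le \Big(\prod_{j=0}^{N-1}C_j\Big)(\Lambda+1)^N\,\Vert\phi\Vert_{L^2(M)} .
\]
Finally the Sobolev embedding $H^N(K)\hookrightarrow C^2(K)$ gives $\Vert\phi\Vert_{C^2(K,g)}\le C'\Vert\phi\Vert_{H^N(K)}$, and combining the two displays produces the desired constant $C=C(K,M,g,\Lambda)$: it depends on $N$ (hence on $\dim M$) and on the geometry of $g$ on the fixed compact neighbourhood $K_N$ of $K$, but on $\lambda$ only through the bound $\Lambda$.

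The only thing requiring care — and it is really the content of the statement — is to arrange that the estimates used are genuinely \emph{interior}, so that no boundary contributions appear and the right-hand side after the bootstrap is $\Vert\phi\Vert_{L^2}$ on a compact set, which is then dominated by $\Vert\phi\Vert_{L^2(M)}$; and to keep the factor $(\Lambda+1)^N$ explicit so that the constant is visibly uniform over all eigenvalues $\lambda$ with $|\lambda|<\Lambda$. No genuine analytic difficulty arises; the lemma is a packaging of standard elliptic regularity tailored so that the constant is independent of the particular eigenvalue in the range $(-\Lambda,\Lambda)$.
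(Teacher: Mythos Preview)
Your argument is correct: the interior elliptic estimate for $D^g$, the eigenvalue bootstrap, and the Sobolev embedding combine exactly as you describe, and the dependence of the final constant on $K$, $g$, and $\Lambda$ (but not on the individual $\lambda$) is made explicit. The paper does not actually supply a proof here---it simply points to \cite[Lemma~2.2]{ammann_dahl_humbert_09}---and your write-up is precisely the standard elliptic regularity argument that reference encapsulates, so there is nothing to compare.
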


Note that $M$ is not assumed to be compact. The proof of 
Lemma~\ref{C^2-estimate} is similar to Lemma~2.2 in 
\cite{ammann_dahl_humbert_09}.

\begin{lemma}[Ascoli's Theorem, {\cite[Theorem~1.34]{adams_fournier_03}}] 
\label{lemma_Ascoli}
Let $(M,g)$ be a Riemannian manifold and let $K \subset M$ be a
compact subset. Suppose that $\phi_i$ is a bounded sequence in
$C^2(K)$, then a subsequence of $\phi_i$ converges in $C^1(K)$.
\end{lemma}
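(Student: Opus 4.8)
The plan is to deduce this from the classical vector-valued Arzel\`a--Ascoli theorem; indeed, in the paper we simply invoke it from \cite{adams_fournier_03}, but the reduction runs as follows. Suppose the $\phi_i$ are sections (over $M$) of a fixed vector bundle with a metric connection, and cover the compact set $K$ by finitely many open sets $U_1, \dots, U_N$ over each of which the bundle is trivialized and which carry coordinates in which the Christoffel symbols and the connection coefficients are bounded. In such a trivialization a section $\phi$ is represented by a vector-valued function, and the hypothesis $\Vert \phi_i \Vert_{C^2(K)} \leq B$ translates, on each $\overline{U_j} \cap K$, into a uniform bound on the $\phi_i$ together with their first and second coordinate partial derivatives; this is just the (standard, $(M,g)$- and $K$-dependent) equivalence of the intrinsic $C^2$-norm with the componentwise one on a compact set, the same equivalence already used implicitly in Lemma~\ref{C^2-estimate}.

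First I would note that, on each chart, the families $\{\phi_i\}$ and $\{\partial_\mu \phi_i\}$ are then uniformly bounded and uniformly equicontinuous on $\overline{U_j} \cap K$: boundedness is immediate, and equicontinuity follows because the uniform bound on the next derivative makes each of these families uniformly Lipschitz. By the vector-valued Arzel\`a--Ascoli theorem there is a subsequence along which $\phi_i$ and all of its first partial derivatives converge uniformly on $\overline{U_j} \cap K$. Running this successively over $j = 1, \dots, N$ and taking a diagonal subsequence, we obtain a single subsequence (still denoted $\phi_i$) together with continuous limits $\phi$ and $\psi_\mu$ such that $\phi_i \to \phi$ and $\partial_\mu \phi_i \to \psi_\mu$ uniformly on $K$.

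The final step is to identify the limit. Since on each chart $\phi_i \to \phi$ and $\partial_\mu \phi_i \to \psi_\mu$ uniformly, the standard fact about uniform limits of continuously differentiable functions gives that $\phi$ is $C^1$ with $\partial_\mu \phi = \psi_\mu$. Re-expressing this invariantly, $\nabla^g \phi_i \to \nabla^g \phi$ uniformly on $K$, that is, $\phi_i \to \phi$ in $C^1(K)$.

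I do not expect a genuine obstacle here; this is a textbook application of Arzel\`a--Ascoli. The only point requiring a little care is the passage between the intrinsic $C^1$- and $C^2$-norms (defined via the connection on the bundle) and the coordinate/componentwise norms, which are equivalent on the compact set $K$ with constants depending only on $(M,g)$ and $K$ — and the diagonal argument over the finitely many charts, which is purely bookkeeping.
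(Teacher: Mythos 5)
Your proof is correct and takes the same route that the cited reference (Adams--Fournier, Theorem 1.34) encodes: the paper gives no proof of its own, only the citation, and your chart-by-chart reduction to the Euclidean Arzel\`a--Ascoli theorem, using the $C^2$-bound to get equicontinuity of the sections and their first derivatives, is precisely the standard argument behind that compact embedding $C^2(K)\hookrightarrow C^1(K)$. The only cosmetic remark is that passing to subsequences successively over finitely many charts is a nested (not a diagonal) extraction, but this does not affect correctness.
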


%%%%%%%%%%%%%%%%%%%%%%%%%%%%%%%%%%%%%%%%%%%%%%%%%%%%%%%%%%%%%%%%%
\subsection{Comparing spinors for different metrics}
\label{compare_section}
%%%%%%%%%%%%%%%%%%%%%%%%%%%%%%%%%%%%%%%%%%%%%%%%%%%%%%%%%%%%%%%%%

Let $M$ be an $n$-dimensional spin manifold with Riemannian metrics
$g$ and $g'$. In this subsection we review the method for comparing 
spinors for $g$ and $g'$ following Bourguignon
and Gauduchon \cite{bourguignon_gauduchon_92}.

There is a unique endomorphism $b_{g'}^g$ of $TM$ which is positive,
symmetric with respect to $g$ and satisfies 
$g(X,Y) = g^\prime(b_{g'}^gX, b_{g'}^gY)$ for all $X,Y \in TM$. Since
$b_{g'}^g$ maps $g$-orthonormal frames to $g'$-orthonormal frames,
this gives an $\SO(n)$-principal bundle map 
$b_{g'}^g: \SO(M,g)\to \SO(M,g')$. If the spin structures $\Spin(M,g)$
and $\Spin(M, g')$ are equivalent then the map $b_{g'}^g$ lifts to a
$\Spin(n)$-principal bundle map 
$\beta_{g'}^g: \Spin(M,g)\to \Spin(M, g')$. From this we get a map 
between the spinor bundles $\Sigma^g M$ and $\Sigma^{g'} M$ which we
will denote with the same symbol,
\begin{align*}
\beta_{g'}^g: \Sigma^g M = \Spin(M,g) \times_{\sigma} \Sigma_n
&\to 
\Spin(M,g') \times_{\sigma} \Sigma_n = \Sigma^{g'}M \\ 
\psi = [s,\phi]
&\mapsto 
\beta_{g'}^g\psi = [\beta_{g'}^g s, \phi]
\end{align*}
where $(\sigma, \Sigma_n)$ is the complex spinor representation.
The map $\beta_{g'}^g$ preserves fiberwise the length of the spinors. 
 
Let the Dirac operator $D^{g'}$ act on sections of $\Sigma^g M$ as the
operator
\[ 
{}^{g\mkern-4mu} D^{g'}
\definedas 
( \beta_{g'}^g)^{-1} \circ D^{g'} \circ \beta_{g'}^g.
\]
Compared with the Dirac operator $D^g$ on $\Sigma^g M$ there
is the following relation, see 
\cite[Th\'eor\`eme 20]{bourguignon_gauduchon_92},
\begin{equation} \label{compare_spinors_BG}
{}^{g\mkern-4mu} D^{g'}
= 
D^g \psi + A_{g'}^g(\nabla^g \psi) + B_{g'}^g (\psi)
\end{equation}
where $A_{g'}^g$ and $B_{g'}^g$ are pointwise vector bundle maps whose
pointwise norms are bounded by 
\begin{equation} \label{boundAB}
| A_{g'}^g | \leq C |g - g'|_g ,
\qquad
| B_{g'}^g | \leq C( |g - g'|_g + |\nabla^g(g - g')|_g ) .
\end{equation}

When $g'$ and $g$ are conformal with $g' = F^2 g$ for a positive
smooth function $F$ we have
\begin{equation} \label{Dconfchange} 
{}^{g\mkern-4mu} D^{g'}
(F^{ -\frac{n-1}{2} } \psi )
= 
F^{-\frac{n+1}{2}} D^g \psi .
\end{equation}

%%%%%%%%%%%%%%%%%%%%%%%%%%%%%%%%%%%%%%%%%%%%%%%%%%%%%%%%%%%%%%%%%
\subsection{Removal of singularities}
%%%%%%%%%%%%%%%%%%%%%%%%%%%%%%%%%%%%%%%%%%%%%%%%%%%%%%%%%%%%%%%%%

The next Lemma tells us that a spinor in $L^2$ which is harmonic
outside a subset of codimension two can be extended to a harmonic
spinor everywhere. 

\begin{lemma} \label{removal_singularities}
Let $(M,g)$ be a compact $(n+1)$-dimensional manifold with boundary
$\partial M$, and let $g$ be product on $\partial M \times [-t_0, 0]$. 
Moreover, let $S\subset \partial M$ be a compact submanifold of
dimension $k \leq n-2$. Let the manifold $M_\infty$ be obtained from
$M$ as described above. Let $B \subset M_\infty$ be a submanifold
(possibly with boundary) of dimension $k+1$ with 
$S \times [-t_0,\infty) \subset B$ and such that 
$B \setminus (S \times (-t_0,\infty))$ is a compact submanifold with 
boundary. Assume that $\phi$ is a spinor with
$\Vert \phi \Vert_{L^2(M_\infty)} < \infty$ and $D^g\phi = 0$ weakly on
${M_\infty} \setminus B$. Then $D^g \phi = 0$ holds weakly also on
${M_\infty}$. 
\end{lemma}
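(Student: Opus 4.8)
The plan is to prove this by a standard cutoff/capacity argument, exploiting that the ``bad set'' $B$ has codimension at least two. The statement to establish is that $\phi$, which is already in $L^2(M_\infty)$ and weakly satisfies $D^g\phi=0$ on $M_\infty\setminus B$, is in fact a weak solution across $B$. Since weak harmonicity is a local (and scale-invariant) condition, and since away from $B$ there is nothing to prove, the whole issue is concentrated in a tubular neighbourhood of $B$. After choosing such a tubular neighbourhood, the codimension of $B$ in $M_\infty$ is $(n+1)-(k+1)=n-k\ge 2$, which is exactly the hypothesis $k\le n-2$; the part of $B$ sitting inside the cylinder is $S\times[-t_0,\infty)$, also of codimension $n-k\ge 2$ in the cylinder $\partial M\times(-t_0,\infty)$, so the same argument works uniformly along the non-compact end.

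First I would fix a test spinor $\psi\in C_c^\infty(\Sigma M_\infty)$ and aim to show $(\phi, D^g\psi)_{L^2}=0$. Choose logarithmic cutoff functions $\chi_\epsilon\colon M_\infty\to[0,1]$ which vanish on an $\epsilon$-neighbourhood of $B$, equal $1$ outside a $2\epsilon$-neighbourhood (measured with respect to the distance to $B$), and satisfy the capacity-type bound $\int_{M_\infty}|\nabla^g\chi_\epsilon|^2\,dv^g\to 0$ as $\epsilon\to 0$. Such functions exist precisely because the codimension of $B$ is at least two: in codimension $m\ge 2$ the function $\rho\mapsto \log(\rho/\epsilon)/\log 2$ truncated to $[\epsilon,2\epsilon]$ (in the compact part) has $L^2$-gradient of order $\big(\log(1/\epsilon)\big)^{-1}$ near the compact pieces, and near the cylindrical piece $S\times[-t_0,\infty)$ one uses the same radial construction in the normal disk bundle, with the integrals staying finite because $\phi\in L^2$ of the whole end and the cutoff is supported in a fixed-radius tube. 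Then $\chi_\epsilon\psi$ is a legitimate test spinor supported in $M_\infty\setminus B$, so $(\phi, D^g(\chi_\epsilon\psi))_{L^2}=0$. Using the Leibniz rule $D^g(\chi_\epsilon\psi)=\chi_\epsilon D^g\psi + \mathrm{cl}(\nabla^g\chi_\epsilon)\psi$ (Clifford multiplication by the gradient), this gives
\[
(\phi,\chi_\epsilon D^g\psi)_{L^2} = -(\phi,\mathrm{cl}(\nabla^g\chi_\epsilon)\psi)_{L^2}.
\]
On the right-hand side, $|(\phi,\mathrm{cl}(\nabla^g\chi_\epsilon)\psi)_{L^2}|\le \|\psi\|_{L^\infty}\,\|\phi\|_{L^2(\mathrm{supp}\,\nabla\chi_\epsilon)}\,\|\nabla^g\chi_\epsilon\|_{L^2}$. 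The factor $\|\nabla^g\chi_\epsilon\|_{L^2}\to 0$ by construction, and $\|\phi\|_{L^2(\mathrm{supp}\,\nabla\chi_\epsilon)}$ stays bounded by $\|\phi\|_{L^2(M_\infty)}$ (indeed it tends to $0$ since the measure of the shrinking tube tends to $0$ and $\phi\in L^2$), so the right side vanishes in the limit. On the left-hand side, $\chi_\epsilon D^g\psi\to D^g\psi$ pointwise with $|\chi_\epsilon D^g\psi|\le |D^g\psi|\in L^1$ (as $\psi$ has compact support), so by dominated convergence $(\phi,\chi_\epsilon D^g\psi)_{L^2}\to(\phi,D^g\psi)_{L^2}$. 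Hence $(\phi,D^g\psi)_{L^2}=0$ for all compactly supported test spinors $\psi$, which is exactly the assertion that $D^g\phi=0$ weakly on all of $M_\infty$.

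The main obstacle, and the only place where care is genuinely needed, is the construction of the cutoff functions $\chi_\epsilon$ with controlled $L^2$-gradient, uniformly over the non-compact piece $B\cap(\partial M\times(-t_0,\infty)) = S\times(-t_0,\infty)$. On the compact part $B\setminus(S\times(-t_0,\infty))$ this is classical (finite union of normal disk bundles, logarithmic cutoff). For the cylindrical part one uses that $g$ is product there, so a tubular neighbourhood of $S\times(-t_0,\infty)$ in $\partial M\times(-t_0,\infty)$ splits as (tubular neighbourhood of $S$ in $\partial M$) $\times(-t_0,\infty)$; taking $\chi_\epsilon$ to depend only on the radial coordinate in the $S$-normal directions makes it translation-invariant along the $t$-direction, so $\nabla^g\chi_\epsilon$ is supported in a tube of fixed cross-sectional radius $2\epsilon$ extending to $t=\infty$, and $\int|\nabla^g\chi_\epsilon|^2\,dv^g$ would be infinite if taken over the whole tube. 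This is fixed by also cutting off in $t$: replace $\chi_\epsilon$ by $\chi_\epsilon\cdot\eta_R$ where $\eta_R$ is a standard cutoff equal to $1$ on $t\le R$, $0$ on $t\ge R+1$, run the argument, and then let $R\to\infty$ using $\phi\in L^2$ (so $\|\phi\|_{L^2(\{t\ge R\})}\to 0$) exactly as in the estimate above; this controls the extra error term $\|\phi\|_{L^\infty?}$—no, rather $\|\psi\|_{L^\infty}\|\phi\|_{L^2(\{R\le t\le R+1\})}\|\nabla\eta_R\|_{L^2}$, which is harmless since in fact $\psi$ already has compact support, so for $R$ large $\eta_R$ is identically $1$ on $\mathrm{supp}\,\psi$ and this term is not even present. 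In other words: since $\psi$ is compactly supported, only finitely much of the cylinder is relevant, the infinite-tube problem never arises, and the codimension-$\ge 2$ logarithmic cutoff applied to the (compact) neighbourhood of $B\cap\mathrm{supp}\,\psi$ suffices. I would write the proof in that streamlined form, noting the product structure only to guarantee that the relevant neighbourhood of $B$ is a genuine tubular neighbourhood on which the Euclidean-type capacity estimate applies.
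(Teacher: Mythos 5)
Your approach is the same as the paper's: fix a compactly supported test spinor $\psi$, introduce a cutoff $\chi_\epsilon$ vanishing near $B$, test $D^g\phi=0$ against $\chi_\epsilon\psi$, and control the commutator term $\mathrm{cl}(\nabla\chi_\epsilon)\psi$ by pairing the smallness of $\|\phi\|_{L^2(\supp\nabla\chi_\epsilon)}$ against the $L^2$-size of the gradient of the cutoff, with the compact support of $\psi$ taming the non-compact part of $B$. The paper does exactly this with a linear cutoff $\eta$ satisfying $|\nabla\eta|\le 2/\delta$, and a volume estimate for the tube giving $\delta^{(n-k)/2-1}\|\phi\|_{L^2(U_B(2\delta))}$, which tends to zero using $\|\phi\|_{L^2(U_B(2\delta))}\to 0$.

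One genuine slip worth correcting: you claim the cutoff $\rho\mapsto\log(\rho/\epsilon)/\log 2$ truncated to $[\epsilon,2\epsilon]$ has $L^2$-gradient of order $(\log(1/\epsilon))^{-1}$. That is false. This is a fixed-scale cutoff (a dilate of one fixed profile), and in codimension $m=2$ the Dirichlet energy of such a profile is scale-invariant: $\int_\epsilon^{2\epsilon}\rho^{-2}\cdot\rho^{m-1}\,d\rho=\log 2$ stays constant as $\epsilon\to 0$. To actually get vanishing Dirichlet energy in codimension $2$ you need the genuinely logarithmic cutoff over two widely separated scales, e.g.\ $\log(\rho/\epsilon^2)/\log(1/\epsilon)$ on $[\epsilon^2,\epsilon]$, whose energy is $\sim(\log(1/\epsilon))^{-1}$. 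So as written your capacity claim fails in exactly the boundary case $n-k=2$. Your proof is nonetheless saved by the other factor you mention parenthetically: since $\phi\in L^2$ and the shrinking tubes have measure tending to zero, $\|\phi\|_{L^2(\supp\nabla\chi_\epsilon)}\to 0$, and a merely bounded $\|\nabla\chi_\epsilon\|_{L^2}$ then suffices. That is in fact precisely the mechanism the paper relies on when $n-k=2$; for $n-k\ge 3$ the power of $\delta$ alone already forces the term to zero.
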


Note that the Lemma includes in the case $B = S\times [-t_0,\infty)$.

\begin{proof}
The proof follows the method of 
\cite[Lemma~2.4]{ammann_dahl_humbert_09}. Let
$\psi$ be a compactly supported spinor. We will show that
$\int_{M_\infty} \< \phi, D^g\psi \> \, dv^g = 0$. 

Let $U_B(\delta)$ consist of the points in $M_\infty$ with distance to
$B$ less than $\delta$. Let $\eta: M_\infty \to [0,1]$ be a smooth
cut-off function with $\eta = 1$ on $U_B(\delta)$, $\eta = 0$ on
$M_\infty \setminus U_B(2\delta)$ and $|\grad^g \eta|\leq
2/\delta$. We compute 
\[ \begin{split}
\left| \int_{M_\infty} \< \phi, D^g \psi \> \, dv^g \right| 
&= 
\left| \int_{M_\infty} \< \phi, D^g( (1-\eta)\psi +\eta \psi) \> 
\, dv^g \right| \\ 
&\leq 
\left| \int_{M_\infty} \< \phi, D^g( (1-\eta)\psi) \> \, dv^g \right| 
+ \left| \int_{M_\infty} \< \phi, \eta D^g \psi \> \, dv^g \right| \\
&\qquad
+ \left| \int_{M_\infty} \< \phi, \grad^g \eta \cdot \psi \> \, dv^g
\right| \\
&\leq 
\left| \int_{M_\infty\setminus U_B(\delta)} \!\!
\< \phi, D^g( (1-\eta)\psi) \> \, dv^g \right|
+ \Vert \phi \Vert_{L^2(U_B(2\delta))} 
\Vert D^g \psi\Vert_{L^2} \\
&\qquad 
+ \frac{2}{\delta}
\Vert\phi\Vert_{L^2(U_B(2\delta))}
\Vert\psi\Vert_{L^2(U_B(2\delta))}.
\end{split} \] 
The first term vanishes since $D^g\phi = 0$ weakly on
$M_\infty \setminus B$ and $(1-\eta)\psi$ is compactly supported on 
$M_\infty\setminus B$. The second summand goes to $0$ as 
$\delta \to 0$. To estimate the third term note that 
\[ \begin{split}
\Vert \psi\Vert^2_{L^2(U_B(2\delta))}
&\leq \max |\psi|^2 \vol (U_B(2\delta)\cap \supp \psi)\\
&\leq \max |\psi|^2 C(\psi) \vol_{k+1} (\mathrm{B}\supp \psi) 
(2\delta)^{{n-k}}
\end{split} \] 
where $\vol_k$ measures the $k$-dimensional volume, $C(\psi)>0$ and
$\mathrm{B}\supp\psi$ denotes a compact subset of $B$ such that
$(U_B(2\delta)\cap \supp \psi) \subset 
U_{\mathrm{B}\supp\psi}(2\delta)$. Then, 
\[ 
\frac{2}{\delta}\Vert \phi\Vert_{L^2(U_B(2\delta))}
\Vert \psi\Vert_{L^2(U_B(2\delta))}
\leq C \delta^{\frac{n-k}{2}-1} \Vert\phi\Vert_{L^2(U_B(2\delta))}
\] 
where $C$ only depends on $\psi$ and, thus, with $ n-k \geq 2$ this
term also tends to $0$ as $\delta\to 0$. 
\end{proof}

%%%%%%%%%%%%%%%%%%%%%%%%%%%%%%%%%%%%%%%%%%%%%%%%%%%%%%%%%%%%%%%%%
\section{Handle attachment} 
%%%%%%%%%%%%%%%%%%%%%%%%%%%%%%%%%%%%%%%%%%%%%%%%%%%%%%%%%%%%%%%%%

In this section the proof of Theorem~\ref{thm_handle_attachment} is
given in a sequence of steps. We begin by giving an overview and
explaining the strategy of the proof. 

%%%%%%%%%%%%%%%%%%%%%%%%%%%%%%%%%%%%%%%%%%%%%%%%%%%%%%%%%%%%%%%%%
\subsection{Overview of the proof}
\label{strategy}
%%%%%%%%%%%%%%%%%%%%%%%%%%%%%%%%%%%%%%%%%%%%%%%%%%%%%%%%%%%%%%%%%

We will use a similar construction as Carr in \cite{carr_88} where it
is proved that the existence of positive scalar curvature metrics on
manifolds with boundary is preserved under handle attachment of
codimension $\geq 3$. For this the manifold is doubled in order to 
obtain a closed manifold and the handle attachment construction is 
split into two steps to make the construction of the new metric easier. 

We will also split the surgery into two steps, but we work with the 
original manifold with attached cylindrical ends since we are 
interested in the invertibility of the Dirac operator.

We now describe the topological construction, and then explain how
the metric will be obtained. 

%%%%%%%%%%%%%%%%%%%%%%%%%%%%%%%%%%%%%%%%%%%%%%%%%%%%%%%%%%%%%%%%%
\subsubsection{Topological strategy} 
%%%%%%%%%%%%%%%%%%%%%%%%%%%%%%%%%%%%%%%%%%%%%%%%%%%%%%%%%%%%%%%%%

Let $(M,g)$ be the initial manifold with product structure near the
boundary on $(-t_0,0] \times \partial M$. Moreover let $(M_\infty, g)$
be $M$ with cylindrical ends attached, and let $S \subset \partial M$
be the handle attachment sphere, where $S$ is diffeomorphic to $S^k$.

\begin{itemize}
\item 
First we construct a surgery along $S^k\times B^{n-k}\times
(-\epsilon, \epsilon) \hookrightarrow \partial M \times
(-t_0, \infty) \subset M_\infty$ where $S^k\times \{0\}$ is mapped to
$S$, see the first picture in Figure~\ref{figure_strategy_a} where $S$
is indicated as the dots inside the circles. By replacing the
image of $S^k\times B^{n-k}\times (-\epsilon, \epsilon)\cong S^k\times
B^{n-k+1}$ by $B^{k+1}\times S^{n-k}$ we obtain $M'_\infty$. 
\item
Second we embed $S^k\times B^{n-k}\times (c, \infty)$ into the part
of $M'_\infty$ which lies ``above'' the first surgery, that is in 
$\partial M\times (c,\infty)$ for certain $c$. Moreover, we embed 
$B^{k+1} \times B^{n-k}$ into the attached handle $B^{k+1}\times S^{n-k}$ 
of the first surgery. Gluing both along its part of the boundary lying
in $\partial M$, that is $S^k\times B^{n-k}\subset \partial M$, we obtain
an embedding
\[
S^k\times B^{n-k}\times (c,\infty) \sqcup B^{k+1}\times B^{n-k}
\cong 
B^{k+1} \times B^{n-k} \hookrightarrow M'_\infty
\]
The second surgery will replace the embedded $B^{k+1} \times B^{n-k}$
by $B_-^{k+2}\times S^{n-k-1}$. 
\end{itemize}

\begin{figure}[h]
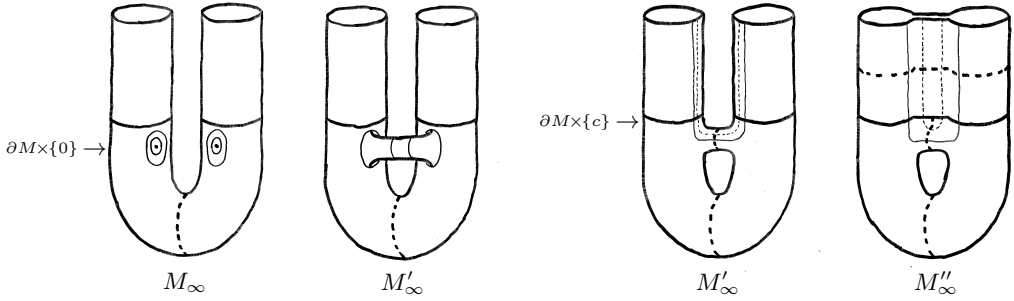

\begin{minipage}{0.44\textwidth}
\begin{lpic}[]{FIGS/STEP2_A(0.20)}
\lbl[]{75,7;{\small $M_{\infty}$}}
\lbl[]{220,7;{\small $M'_{\infty}$}}
\lbl[]{-20,96;{\tiny $\partial M\!\!\times\!\!\{0\}$}}
\lbl[]{15,95;{\small $\rightarrow$}}
\end{lpic}
\end{minipage}\hfill
\begin{minipage}{0.44\textwidth}
\begin{lpic}[]{FIGS/STEP4_A(0.20)}
\lbl[]{75,7;{\small $M'_{\infty}$}}
\lbl[]{220,7;{\small $M''_{\infty}$}}
\lbl[]{-20,114;{\tiny $\partial M\!\!\times\!\!\{c\}$}}
\lbl[]{15,113;{\small $\rightarrow$}}
\end{lpic}
\end{minipage}
\caption{Surgery divided in two steps}
\label{figure_strategy_a}
\end{figure}

Note that after cutting $M'_\infty$ along the former boundary of $M$ 
(which is $\partial M \times \{0\}\subset M_\infty$) we already get
the desired surgery, $S^k\times B_{-}^{n-k+1}$ is replaced by
$B^{k+1}\times S_{-}^{n-k+1}$. Thus, topologically this would
suffice. But in order to obtain a metric which has product
structure near the boundary we have the second surgery which produces
a cylinder above the boundary and which does not change the topology
below the boundary. Thus, after both steps we still have the desired
surgery on the manifold with boundary and additionally we already got
the corresponding manifold with attached cylinders. 

%%%%%%%%%%%%%%%%%%%%%%%%%%%%%%%%%%%%%%%%%%%%%%%%%%%%%%%%%%%%%%%%%
\subsubsection{Metric strategy} 
%%%%%%%%%%%%%%%%%%%%%%%%%%%%%%%%%%%%%%%%%%%%%%%%%%%%%%%%%%%%%%%%%

One of the main tasks in the proof is to construct approximations of 
the metric such that the handles can be glued into the manifold and 
such that the metrics are easily extended to the handles. Moreover, 
this has to be done in such a way that the new metrics can be chosen
to be arbitrarily close to the old one but still have an invertible 
Dirac operator on the manifold before and after surgery.

We now explain the steps in the proof.

\begin{itemize}
\item
In Step 1, before starting with the first surgery, we approximate
$g$ by metrics $\overline{g}_\delta$. The new metrics will have product 
structure on a small tubular neighbourhood of
$S\times (-t_0+\delta, \infty)$. This product structure is not only a 
product in the direction tangential to the boundary as before but also 
product of $S$ and the normal directions inside the boundary. Moreover, 
the new metric will coincide with $g$ outside a larger tubular 
neighbourhood $S\times (-t_0+\delta, \infty)$, see 
Figure~\ref{figure_step1a}. In Proposition~\ref{first_approx_prop} we
show that choosing $\delta$ small enough, the metrics 
$\overline{g}_\delta$ will still have invertible Dirac operators. 
In the proof, one can easily rule out the case that zero is in the 
essential spectrum by using that the induced metric on the boundary 
will still have invertible Dirac operator which gives a spectral gap 
on the cylindrical end, see Lemma~\ref{lemmabdrygap} and 
Proposition~\ref{propbdrygap}, Part (1). Thus, the main task will be 
to show that zero is not an eigenvalue for $\overline{g}_\delta$ for 
$\delta$ small enough which will be done by estimating the norm of the 
spinor at the cylindrical end using Proposition~\ref{propbdrygap}, 
Part (3). 
\item
In Step 2 the first surgery is performed. The product structure 
produced in step 1 allows us to obtain a metric $g'$ on $M_\infty'$. 
In Proposition~\ref{first_surgery_prop} we show that $g'_\rho$ has 
invertible Dirac operator if the size of the surgery, as measured by 
$\rho$, is sufficiently small. The proof is similar to the one of
Proposition~\ref{first_approx_prop} in Step 1 since the cylindrical
ends are not affected by the construction. But now one has to exclude 
that the norm of the harmonic spinor is concentrated in the attached 
neck. This will be done by an a priori estimate, see 
Lemma~\ref{fundamental_estimate_first_n}.
\item
Step 3 is a second approximation of the metric. Above the first surgery, 
that is on a neighbourhood of $S\times [c, \infty)$, $g'$ still has the 
desired product structure. The aim of this step is to extend 
$S\times [c, \infty)$ and therewith the product structure of its 
neighbourhood smoothly to a neighbourhood of 
$B^{k+1}\times \{ {\rm pt. } \}$ sitting in the attached handle. This 
gives a product structure on the neighbourhood of 
$B^{k+1} \equiv S^k \times [c, \infty) \sqcup 
B^{k+1}\times \{ {\rm pt.} \} \hookrightarrow M'_\infty$. Again,
choosing the involved parameters sufficiently small the resulting
metric has invertible Dirac operator, see 
Proposition~\ref{Second_approx_prop}.
\item
In Step 4 the second surgery is done and results in the desired metric
$g'' \in \Rinv(M)$ for sufficiently small surgery parameter. This
will be proved in Proposition~\ref{second_surgery_prop} and as in Step 2 
an additional estimate (see Lemma~\ref{fundamental_estimate_first}) is 
needed to ensure that the norm of the spinor is not concentrated in the 
infinite part attached by the surgery.
\end{itemize}

%%%%%%%%%%%%%%%%%%%%%%%%%%%%%%%%%%%%%%%%%%%%%%%%%%%%%%%%%%%%%%%%%
\subsubsection{Notation}
%%%%%%%%%%%%%%%%%%%%%%%%%%%%%%%%%%%%%%%%%%%%%%%%%%%%%%%%%%%%%%%%%

Before starting the proof we need to introduce refined notation for
the surgery embedding as in the beginning of Section~3 in
\cite{ammann_dahl_humbert_09}. Let $(M,g)$ be a compact spin manifold
with boundary. The manifold $M''$ is obtained from $M$ by
surgery using the embedding $f: S^k \times B^{n-k} \to \partial M$. We
now make some more detailed assumptions about the map $f$.

Let $i: S^k \to \partial M$ be an embedding and set
$S \definedas i(S^k)$. Let $\pi^\nu : \nu \to S$ be the normal bundle
of $S$ in $(\partial M, \partial g)$. We assume that a trivialization
of $\nu$ is given through a vector bundle map 
$\iota : S^k \times \mR^{n-k} \to \nu$ such that 
$(\pi^{\nu} \circ \iota)(p,0) = i(p)$ for $p \in S^k$. Further we
assume that $\iota$ is fiberwise an isometry when the fibers
$\mR^{n-k}$ of $S^k \times \mR^{n-k}$ are given the standard metric,
and the fibers of $\nu$ have the metric induced by $\partial g$. We
get the embedding $f$ by setting $f \definedas \exp^{\nu} \circ \iota:
S^k \times B^{n-k}(R) \to \partial M$ for sufficiently small $R$. 
We define open neighborhoods $U_S(R)$ of $S$ in $\partial M$ by
\[
U_S(R) \definedas
(\exp^{\nu} \circ \iota) ( S^k \times B^{n-k}(R) )
\]
for $R$ small enough. For a point $x \in \partial M$ set 
$r(x) \definedas d^g(x,S)$ to be the distance from $x$ to $S$. Again,
let $h$ denote the pullback by $i$ to $S^k$ of the restriction of $g$
to the tangent bundle of $S$, 
\[
h \definedas
i^* ( g |_{TS \times TS} ).
\]
Our goal is to perturb the metric $g$ slightly so that
the map $f$ becomes an isometry if its domain is equipped with the
product metric $h + \xi^{n-k}$. The next lemma gives an estimate of how
much this fails for the metric $g$.
\begin{lemma}[{\cite[Lemma~3.1]{ammann_dahl_humbert_09}}]
\label{ADHlemma3.1}
For sufficiently small $R > 0$ there is a constant $C > 0$ so that 
\[
G \definedas
\partial g - (f^{-1})^* (h + \xi^{n-k})
\]
satisfies 
\[
|G| \leq C r, \qquad |\nabla G| \leq C 
\]
on $U_S(R)$.
\end{lemma}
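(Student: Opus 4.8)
The plan is to work in the Fermi-type coordinates around $S$ furnished by the map $f = \exp^\nu\circ\iota$, to observe that the tensor $G$ vanishes identically along $S$ (not merely at a point), and then to deduce both estimates by a Taylor expansion in the normal variable together with a compactness argument. For $R$ small enough $\exp^\nu$ is a diffeomorphism from a neighbourhood of the zero section of $\nu$ onto a neighbourhood of $S$ in $(\partial M,\partial g)$, so $f$ is an embedding on $S^k\times B^{n-k}(R)$ and $G$ is a smooth symmetric $2$-tensor on the open set $U_S(R)$.

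The key geometric point is that $G|_S=0$. Fix $p\in S^k$ and put $x=i(p)\in S$. Since $f(p,0)=\exp^\nu_{i(p)}(0)=i(p)$, the restriction of $df_{(p,0)}$ to $T_pS^k\times\{0\}$ is $di_p$, so on that factor $(f^*\partial g)(p,0)$ agrees with $i^*(g|_{TS\times TS})=h$. On the factor $\{0\}\times\mR^{n-k}$ the differential $df_{(p,0)}$ sends $v$ to $\iota(p,v)\in\nu_x$ (differentiating the radial geodesic $t\mapsto\exp^\nu_{i(p)}(\iota(p,tv))$ at $t=0$ and using $d(\exp^\nu)_0=\mathrm{Id}$ on the fibre), and by hypothesis $\iota$ is a fibrewise isometry onto $(\nu_x,\partial g|_{\nu_x})$, so on that factor $(f^*\partial g)(p,0)$ agrees with $\xi^{n-k}$. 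Finally the mixed terms vanish because $di_p(T_pS^k)=T_xS$ is $\partial g$-orthogonal to $\nu_x$. Hence $(f^*\partial g)(p,0)=(h+\xi^{n-k})(p,0)$, i.e.\ $G(x)=0$ for all $x\in S$.

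Now choose $R'<R$ so small that $\overline{U_S(R')}$ is a compact subset of $U_S(R)$ and that every point $x=\exp^\nu_{i(p)}(\iota(p,v))\in U_S(R')$ is joined to $i(p)\in S$ by the radial $\partial g$-geodesic $t\mapsto\exp^\nu_{i(p)}(\iota(p,tv))$, $t\in[0,1]$, which has length $|\iota(p,v)|_{\partial g}=|v|$ and, for $R'$ small, realizes the distance, so $r(x)=d^g(x,S)=|v|$. On the compact set $\overline{U_S(R')}$ the smooth tensor $G$ and its covariant derivative $\nabla G$ are bounded, which immediately gives $|\nabla G|\le C$. For the other estimate, apply the mean value inequality to $G$ along the above radial geodesic and use $G|_S=0$: $|G(x)|\le\big(\sup_{\overline{U_S(R')}}|\nabla G|\big)\,r(x)\le Cr$. (Equivalently, Taylor-expand $f^*G$ in $v$ about $v=0$, use $|G|\asymp|f^*G|$ and $|v|\asymp r$ on $\overline{U_S(R')}$, and push forward.)

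The only step that needs genuine attention is the second paragraph: it is precisely the normalization built into $\iota$ and $\exp^\nu$ (fibrewise isometry onto the normal bundle, zero section mapped to $i$, radial directions mapped to unit-speed normal geodesics) that forces $G$ to vanish along \emph{all} of $S$, so that the pointwise bound $|G|\le Cr$ is available rather than only a crude $O(1)$ bound; everything after that is routine. I would also remark that the norms and the connection $\nabla$ may be taken with respect to $\partial g$ or to the product metric $h+\xi^{n-k}$ indifferently, since these metrics are uniformly equivalent on the compact set $\overline{U_S(R')}$ (they even agree on $S$), so the choice does not affect the statement.
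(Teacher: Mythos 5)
Your proposal is correct and is essentially the standard argument one would give (and, as far as I can tell, what the cited reference does): set up Fermi-type coordinates via $f=\exp^\nu\circ\iota$, verify that the normalizations built into $\iota$ and $\exp^\nu$ force $G|_S=0$ (including the mixed terms, which vanish because $TS\perp_{\partial g}\nu$), get $|\nabla G|\le C$ from smoothness and compactness, and then deduce $|G|\le Cr$ by integrating $\nabla G$ along the radial geodesic realizing $r(x)=|v|$. The paper itself does not reprove this lemma but cites it, so there is no alternative in-paper argument to compare against; your write-up supplies the missing details cleanly, including the observation that the choice of connection and norm (for $\partial g$ versus the product metric) is immaterial on the compact tube.
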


We are now ready to go through the steps of the proof.

%%%%%%%%%%%%%%%%%%%%%%%%%%%%%%%%%%%%%%%%%%%%%%%%%%%%%%%%%%%%%%%%%
\subsection{Step 1: Approximating by product metrics}
\label{step1}
%%%%%%%%%%%%%%%%%%%%%%%%%%%%%%%%%%%%%%%%%%%%%%%%%%%%%%%%%%%%%%%%%

We show that the metric on $(M,g)$ can be perturbed to have product
form near the surgery sphere, the argument follows
\cite[Proposition~3.2]{ammann_dahl_humbert_09}. We recall that the
metric $g$ has by assumption a cylindrical structure 
$g = \partial g + dt^2$ in a neighbourhood 
$\partial M \times (-t_0, 0]$ of the boundary.
 
\begin{prop} \label{first_approx_prop}
The metric $g \in \Rinv(M)$ can be arbitrarily closely approximated by
metrics $\overline{g}_{\delta} \in \Rinv(M)$ which have 
\[
\overline{g}_{\delta} 
= 
\partial \overline{g}_{\delta} + dt^2 = h + \xi^{n-k} + dt^2
\]
on $U_S(\delta) \times (-t_0 + 2\delta, \infty)$ and
\[
\overline{g}_{\delta} = g
\]
outside $U_S(2\delta) \times (-t_0 + \delta, \infty)$.
\end{prop}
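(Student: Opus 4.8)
The plan is to follow the strategy of \cite[Proposition~3.2]{ammann_dahl_humbert_09}, adapting it to the cylindrical-end setting. First I would construct the family of metrics $\overline{g}_\delta$ explicitly. Using the refined surgery notation, on $U_S(2\delta)$ we write $\partial g = h + \xi^{n-k} + G$ (pulled back via $f$), where by Lemma~\ref{ADHlemma3.1} we have $|G| \leq Cr$ and $|\nabla G| \leq C$. Choose a smooth cut-off $\chi_\delta : \partial M \to [0,1]$ which is $1$ on $U_S(\delta)$, vanishes outside $U_S(2\delta)$, and satisfies $|\nabla \chi_\delta| \leq C/\delta$; set $\partial\overline{g}_\delta \definedas \partial g - \chi_\delta \cdot G = h + \xi^{n-k} + (1-\chi_\delta)G$ near $S$ and $\partial\overline{g}_\delta = \partial g$ elsewhere. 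Then set $\overline{g}_\delta \definedas \partial\overline{g}_\delta + dt^2$ on $\partial M \times (-t_0,\infty)$, interpolating back to $g$ in the $t$-direction on the slab $t \in (-t_0+\delta, -t_0+2\delta)$ using another cut-off, and keep $\overline{g}_\delta = g$ on the rest of $M$. One checks directly from the bounds on $G$ that $|\overline{g}_\delta - g|_g \leq C\delta$ and $|\nabla^g(\overline{g}_\delta - g)|_g \leq C$ on the support of the perturbation (which has volume $O(\delta^{n-k})$), so $\overline{g}_\delta \to g$ in $C^1$ and hence $C^0$; this gives the approximation and the two prescribed forms of the metric.

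Next I would show $\overline{g}_\delta \in \Rinv(M)$ for $\delta$ small. Invertibility means $0 \notin \Spec(D^{\overline{g}_\delta})$ on $(M_\infty, \overline{g}_\delta)$. Since the cylindrical ends are untouched by the construction, $\partial\overline{g}_\delta$ agrees with $\partial g$ outside $U_S(2\delta)$ and is a small $C^1$-perturbation of it inside; because $D^{\partial g}$ is invertible (it has a discrete spectrum with a gap, as $g \in \Rinv(M)$ forces this via the decomposition principle together with Proposition~\ref{propbdrygap}(1)), for $\delta$ small $D^{\partial\overline{g}_\delta}$ also has a spectral gap $(-\Lambda,\Lambda)$ with a fixed $\Lambda > 0$ — this follows from the Bourguignon--Gauduchon comparison \eqref{compare_spinors_BG}--\eqref{boundAB} and the fact that $\partial M$ is compact, so the essential spectrum of $D^{\overline{g}_\delta}$ on $M_\infty$ avoids $(-\Lambda,\Lambda)$. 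Hence the only way invertibility could fail is that $0$ becomes a genuine $L^2$-eigenvalue. I would argue this cannot happen for small $\delta$ by contradiction: suppose $\phi_\delta$ is a unit-norm harmonic spinor for $\overline{g}_\delta$ on $M_\infty$.

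The heart of the argument, and the main obstacle, is ruling out this eigenvalue. I would use the comparison map $\beta^g_{\overline{g}_\delta}$ to transport $\phi_\delta$ to a spinor $\psi_\delta$ for the fixed metric $g$ on $M_\infty$, with $\Vert\psi_\delta\Vert_{L^2(g)} = 1$ (the comparison map is a fiberwise isometry) and ${}^{g}D^{\overline{g}_\delta}\psi_\delta = 0$, so by \eqref{compare_spinors_BG}--\eqref{boundAB}, $\Vert D^g \psi_\delta\Vert_{L^2(g)} \leq C(|G|_\infty + |\nabla G|_\infty)\Vert\psi_\delta\Vert_{H^1}$ — but this is supported only on the thin set $U_S(2\delta)\times\mathbb{R}$, so the naive bound does not immediately give smallness because $|\nabla G|$ is only $O(1)$. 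The resolution, exactly as in \cite{ammann_dahl_humbert_09}, is to control the $H^1$-norm of $\phi_\delta$ by its $L^2$-norm using Lemma~\ref{C^2-estimate} (applied on a fixed compact piece containing $U_S(R)$), and to control the mass of $\phi_\delta$ on the cylindrical end using the exponential-decay estimate Proposition~\ref{propbdrygap}(3) — whose decay rate depends only on $\Lambda$, not on $\delta$. One then extracts: from the normalization and exponential decay, a definite fraction of $\Vert\phi_\delta\Vert_{L^2}^2$ sits on a fixed compact set; on that set Lemma~\ref{C^2-estimate} and Ascoli (Lemma~\ref{lemma_Ascoli}) give a $C^1$-convergent subsequence $\phi_{\delta_j}\to\phi_0$; the limit $\phi_0$ is a nonzero $L^2$ harmonic spinor for $g$ on $M_\infty$, contradicting $g \in \Rinv(M)$. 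The one technical subtlety to handle carefully is that the perturbation support shrinks, so one must check that the error terms $A,B$ in \eqref{boundAB}, even though $|\nabla G|$ is $O(1)$, contribute an $L^2$-error that is $o(1)$: this uses that $\Vert\phi_\delta\Vert_{L^2(U_S(2\delta))} \to 0$ (again by a removal-of-singularities / small-volume argument, cf.\ Lemma~\ref{removal_singularities}), together with the uniform elliptic estimate. Carrying out these estimates with the $\delta$-dependence tracked explicitly is the crux; the rest is bookkeeping.
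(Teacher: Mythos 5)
Your proposal follows essentially the same route as the paper: construct $\overline{g}_\delta$ by interpolating $\partial g$ to $(f^{-1})^*(h+\xi^{n-k})$ with a cut-off near $S$, establish a uniform boundary spectral gap so the essential spectrum of $D^{\overline{g}_\delta}$ on $M_\infty$ avoids an interval around zero, and then rule out a zero eigenvalue by extracting (via the $C^2$ interior estimate, Ascoli, and removal of singularities) a nontrivial $L^2$-harmonic limit spinor for $g$, with nontriviality enforced by the exponential decay estimate of Proposition~\ref{propbdrygap}(3). Two small imprecisions: the claim $\overline{g}_\delta\to g$ in $C^1$ is false (the bounds \eqref{boundAB}/\eqref{ABleqr} give $|B^g_{\overline{g}_\delta}|\le C$ only, so the perturbation is small in $C^0$ but not in $C^1$), and for the same reason the uniform spectral gap on $(\partial M,\partial\overline{g}_\delta)$ cannot be obtained by a naive continuity argument from the Bourguignon--Gauduchon comparison; it requires its own contradiction argument as in Lemma~\ref{lemmabdrygap}, which you correctly invoke by citing \cite{ammann_dahl_humbert_09} but should not compress to ``follows from the comparison and compactness.''
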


Before discussing the proof of this Proposition we define the metrics
$\overline{g}_{\delta}$. 

Let $\chi: \mR \to [0,1]$ be a smooth decreasing function with 
$\chi = 1$ on $(-\infty, 1]$, $\chi = 0$ on $[2,\infty)$, and 
$-2 \leq \chi' \leq 0$. On the part of $(M_\infty,g)$ which is
isometric to $(\partial M \times (- t_0,\infty), \partial g + dt^2 )$
we define a cut-off function
\[
\eta(x,t) \definedas 
\chi(r(x)/ \delta) (1 - \chi( (t + t_0)/ \delta ) ),
\]
where $\delta > 0$ is a small parameter. This has the property that
$\eta(x,t) = 1$ if $x \in U_S(\delta)$ and 
$t \geq - t_0 + 2\delta$, and $\eta(x,t) = 0$ if 
$x \in \partial M \setminus U_S(2\delta)$ or if $t \leq -t_0 + \delta$.
We define the metrics
\[
\overline{g}_{\delta}
\definedas
\eta (f^{-1})^* (h + \xi^{n-k}) + (1 - \eta) \partial g + dt^2
\]
on $\partial M \times (- t_0,\infty)$ and we extend them by setting
$\overline{g}_{\delta} \definedas g$ on the rest of $M$. The metric 
$\overline{g}_{\delta}$ has the required product structure where 
$\eta = 1$, that is on 
$U_S(\delta) \times (- t_0 + 2\delta, \infty)$. Further, we have 
$\overline{g}_{\delta} = g$ outside 
$U_S(2\delta) \times (- t_0 + \delta, \infty)$. From
\[
\overline{g}_{\delta} - g
=
\eta ( (f^{-1})^* (h + \xi^{n-k}) - \partial g )
\]
together with \eqref{boundAB} and Lemma~\ref{ADHlemma3.1} we get that 
\begin{equation} \label{ABleqr}
|A^g_{\overline{g}_{\delta}}| \leq C \eta r, 
\qquad 
|B^g_{\overline{g}_{\delta}}| \leq C \eta + C |\grad^g \eta| r
\end{equation}
for a some $C>0$. The metric $\overline{g}_{\delta}$ restricted to the
boundary $\partial M$ gives the boundary metric
\[ \begin{split}
\partial \overline{g}_{\delta}
&=
\eta (f^{-1})^* (h + \xi^{n-k}) + (1 - \eta) \partial g \\
&= 
\chi(r / \delta) (f^{-1})^* (h + \xi^{n-k}) 
+ (1 - \chi(r / \delta) ) \partial g .
\end{split} \]

Figure~\ref{figure_step1a} shows $(M_{\infty}, g)$ to the left and 
$(M_{\infty}, \overline{g}_{\delta})$ with the product region shaded
to the right.

\begin{figure}[h]
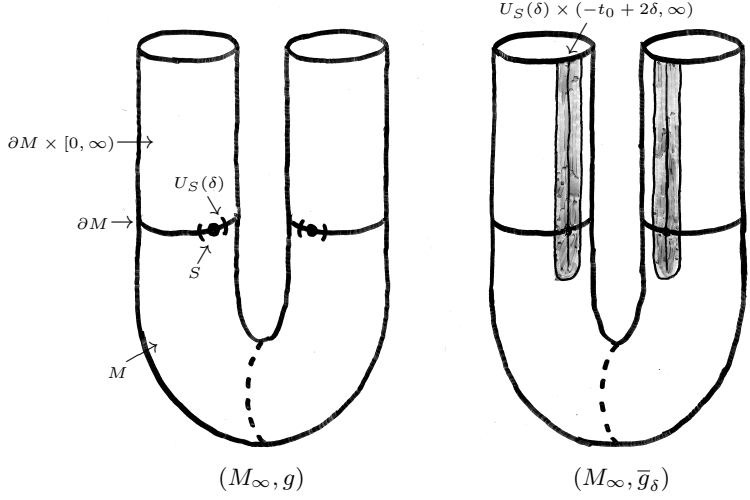

\begin{lpic}[]{FIGS/STEP1_A(0.33)}
\lbl[]{75,10;{\small $(M_{\infty}, g)$}}
\lbl[]{220,10;{\small$(M_{\infty}, \overline{g}_{\delta})$}}

\lbl[]{17,53;{\tiny $M$}}
\lbl[]{27,60,30;{\small $\longrightarrow$}}
\lbl[]{7,113;{\tiny $\partial M$}}
\lbl[]{19,113;{\small $\rightarrow$}}
\lbl[r]{16,145;{\tiny $\partial M \times [0,\infty)$}}
\lbl[]{24,145;{\small $\longrightarrow$}}

\lbl[]{48,93;{\tiny $S$}}
\lbl[]{52,100,60;{\small $\rightarrow$}}
\lbl[]{50,129;{\tiny $U_S(\delta)$}}
\lbl[]{55,120,-60;{\small $\rightarrow$}}
\lbl[]{210,199;{\tiny $U_S(\delta) \times (-t_0 + 2\delta, \infty)$}}
\lbl[]{202,188,-120;{\small $\longrightarrow$}}
\end{lpic}
\caption{Approximating with a product metric.}
\label{figure_step1a}
\end{figure}

We begin by proving that the boundary metrics have uniform spectral
gaps around zero. The proof is very similar to 
\cite[Proposition~3.2]{ammann_dahl_humbert_09}.

\begin{lemma} \label{lemmabdrygap}
There are constants $\Lambda, \delta_0>0$ such that the Dirac operator
of the closed manifold $(\partial M, \partial \overline{g}_{\delta})$
has a spectral gap $(-\Lambda, \Lambda)$ for all $\delta < \delta_0$.
\end{lemma}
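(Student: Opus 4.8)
The plan is to argue by contradiction. Suppose no such $\Lambda, \delta_0$ exist; then we can find a sequence $\delta_j \to 0$ and eigenspinors $\phi_j$ of $D^{\partial \overline{g}_{\delta_j}}$ on $\partial M$, with $\Vert \phi_j \Vert_{L^2(\partial \overline g_{\delta_j})} = 1$ and eigenvalue $\lambda_j \to 0$. The boundary metrics $\partial \overline{g}_{\delta_j} = \chi(r/\delta_j)(f^{-1})^*(h+\xi^{n-k}) + (1-\chi(r/\delta_j))\partial g$ converge to $\partial g$ in $C^\infty$ on compact subsets of $\partial M \setminus S$, and in fact uniformly in $C^1$ on all of $\partial M$ (this uses Lemma~\ref{ADHlemma3.1}: $|\partial\overline g_{\delta_j} - \partial g| = |\chi(r/\delta_j) G| \le C\chi(r/\delta_j) r \le 2C\delta_j \to 0$, while the derivative bound $|\nabla G|\le C$ keeps the first derivatives controlled where $\chi' \ne 0$). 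Since $S$ has codimension $n-k \ge 2$ in $\partial M$, it has zero $H^{n-1}$-measure, so the exceptional set is negligible for $L^2$ purposes.

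Next I would transport all the $\phi_j$ into a single fixed spinor bundle using the Bourguignon–Gauduchon maps $\beta^{\partial g}_{\partial \overline g_{\delta_j}}$ from Section~\ref{compare_section}, obtaining spinors $\tilde\phi_j := \beta^{\partial g}_{\partial\overline g_{\delta_j}}\phi_j$ on $(\partial M,\partial g)$ with $\Vert\tilde\phi_j\Vert_{L^2(\partial g)}$ bounded (the $\beta$ maps are fiberwise isometries and the volume forms converge). From \eqref{compare_spinors_BG} and \eqref{boundAB}, together with the $C^1$-convergence of the metrics, one gets $\Vert {}^{\partial g}D^{\partial\overline g_{\delta_j}}\tilde\phi_j - D^{\partial g}\tilde\phi_j\Vert \to 0$ in an appropriate sense, so $\Vert D^{\partial g}\tilde\phi_j\Vert_{L^2} \to 0$. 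Elliptic estimates on the closed manifold $(\partial M,\partial g)$ then bound $\tilde\phi_j$ in $H^1$, so after passing to a subsequence $\tilde\phi_j \rightharpoonup \tilde\phi$ weakly in $H^1$ and strongly in $L^2$, and the limit satisfies $D^{\partial g}\tilde\phi = 0$ with $\Vert\tilde\phi\Vert_{L^2(\partial g)} = 1 \ne 0$. This contradicts the hypothesis $g\in\Rinv(M)$, which forces $D^{\partial g}$ to be invertible: indeed, invertibility of $D^g$ on $M_\infty$ together with Lemma~\ref{lemmabdrygap}'s own use of Proposition~\ref{propbdrygap}(1) would be circular here, so instead I would note directly that a nonzero harmonic spinor on $(\partial M,\partial g)$ pulls back to an $L^2$ harmonic spinor on the cylindrical end $\partial M\times[0,\infty)$ (constant in $t$), contradicting the spectral gap of $D^g$ on $M_\infty$; alternatively simply cite that $g\in\Rinv(M)$ entails $0\notin\spec(D^{\partial g})$ via the decomposition/gap principle already recorded before Proposition~\ref{propbdrygap}.

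The main obstacle is the convergence of the Dirac operators: near $S$ the metrics $\partial\overline g_{\delta_j}$ do \emph{not} converge in $C^2$, and the factor $|\grad^g\eta| \sim 1/\delta_j$ blows up. The resolution is that $\grad$ of the cutoff is supported in the shrinking annulus $U_S(2\delta_j)\setminus U_S(\delta_j)$, whose volume is $O(\delta_j^{n-k})$; combined with the $C^1$-bound on $G$ from Lemma~\ref{ADHlemma3.1}, the $B$-term contributions from \eqref{boundAB} are controlled in $L^2$ by $\int_{U_S(2\delta_j)} (C + C\delta_j^{-1} r)^2 \,dv \le C\delta_j^{n-k} = o(1)$ since $n-k\ge 2$ — so the bad region contributes negligibly in the $L^2$ pairing against the (elliptically bounded) test spinors. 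Handling this annular region carefully, exactly as in the proof of Lemma~\ref{removal_singularities}, is the crux; everything else is a routine weak-compactness argument on the fixed closed manifold $(\partial M,\partial g)$.
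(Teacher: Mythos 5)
Your overall strategy---transport the eigenspinors into the fixed bundle $\Sigma^{\partial g}\partial M$ via the Bourguignon--Gauduchon isomorphism, use \eqref{compare_spinors_BG} and \eqref{boundAB}, extract a limit by compactness, and derive a nontrivial element of $\ker D^{\partial g}$---is exactly the approach the paper has in mind (it simply references the proof of \cite[Proposition~3.2]{ammann_dahl_humbert_09}, which runs along these lines). However, as written there are two concrete problems in the chain of implications.

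First, your claim that $\partial\overline g_{\delta_j}\to\partial g$ ``uniformly in $C^1$ on all of $\partial M$'' is false, and the deduction that rests on it is therefore unsupported. The difference is $\eta G$ with $\eta=\chi(r/\delta_j)$, and while $|\eta G|\le 2C\delta_j\to 0$, the derivative $\nabla(\eta G)=\chi'(r/\delta_j)\delta_j^{-1}(\nabla r)\,G+\eta\,\nabla G$ is merely \emph{bounded} (by roughly $4C+C$, using $|G|\le Cr\le 2C\delta_j$ where $\chi'\ne 0$) but does \emph{not} tend to $0$ in sup norm. Equivalently, \eqref{ABleqr} gives $|B^{\partial g}_{\partial\overline g_{\delta_j}}|\lesssim 1$, not $o(1)$. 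So you cannot conclude $\Vert D^{\partial g}\tilde\phi_j\Vert_{L^2}\to 0$ directly, and the step ``elliptic estimates then bound $\tilde\phi_j$ in $H^1$'' as you present it is in the wrong logical order. The repair: note that the $H^1$ bound only needs $\Vert A\Vert_\infty\to 0$ (true, $\lesssim\delta_j$) and $\Vert B\Vert_\infty$ \emph{bounded} (true); absorb the $A$-term to get $\Vert\tilde\phi_j\Vert_{H^1}\le C$. Then Rellich gives strong $L^2$ convergence $\tilde\phi_j\to\tilde\phi$ with $\Vert\tilde\phi\Vert_{L^2}=1$, and only \emph{afterwards} does $\Vert B\tilde\phi_j\Vert_{L^2}\le C\Vert\tilde\phi_j\Vert_{L^2(U_S(2\delta_j))}\to 0$ follow (strong convergence plus $\operatorname{vol}(U_S(2\delta_j))\to 0$). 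Alternatively, in the weak formulation against a fixed smooth $\psi$, $\bigl|\int\langle B\tilde\phi_j,\psi\rangle\bigr|\le\Vert B\Vert_\infty\Vert\psi\Vert_\infty\operatorname{vol}(U_S(2\delta_j))^{1/2}\to 0$, which is closer to what you sketch in your ``main obstacle'' paragraph --- but this needs to be the argument, not a patch on the earlier incorrect claim.

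Second, the suggestion that a harmonic spinor on $(\partial M,\partial g)$ ``pulls back to an $L^2$ harmonic spinor on $\partial M\times[0,\infty)$ constant in $t$'' does not work: a $t$-independent spinor has infinite $L^2$-norm on the half-infinite cylinder. Your alternative is the correct one: $g\in\Rinv(M)$ implies $0\notin\spec(D^{\partial g})$ via the decomposition principle for the essential spectrum (equivalently, Proposition~\ref{propbdrygap}~(1) read in reverse: the essential spectrum of $D^g$ on $M_\infty$ reaches $0$ precisely when $D^{\partial g}$ has kernel). Keep only that alternative. With these two repairs the argument is correct and coincides with the route the paper intends.
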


\begin{proof}
We argue by contradiction and assume that there is a sequence
$\delta_i \to 0$ such that 
\[
D^{\partial \overline{g}_{\delta_i}} \phi_i = \lambda_i \phi_i
\]
where $\lambda_i \to 0$ and $\phi_i$ are spinors on 
$(\partial M, \partial \overline{g}_{\delta_i})$ with 
$\int_{\partial M} |\phi_i|^2 \, dv^{\partial \overline{g}_{\delta_i}} = 1$.
The proof continues exactly as in \cite{ammann_dahl_humbert_09} and 
uses \eqref{compare_spinors_BG} and \eqref{ABleqr}. 
\end{proof}

We are now ready to prove Proposition~\ref{first_approx_prop}.

\begin{proof}[Proof of Proposition~\ref{first_approx_prop}] 
The metrics $\overline{g}_{\delta}$ have the required product
structure, we need to show that $\overline{g}_{\delta} \in \Rinv(M)$
when $\delta$ small enough. We proceed by assuming the contrary: there
exists a sequence $\delta_i \to 0$ such that the operators
$D^{\overline{g}_{\delta_i}}$ are not invertible. 

From Lemma~\ref{lemmabdrygap} we know that there are constants 
$\Lambda, \delta_0>0$ such that the restriction 
$\partial g_\delta \definedas g_\delta|_{\partial M\times \{0\}}$ has
a spectral gap $(-\Lambda, \Lambda)$ for all $\delta < \delta_0$. 
From Proposition~\ref{propbdrygap} it then follows that the essential
spectrum of $D^{\overline{g}_{\delta_i}}$ has the same gap, and, thus,
the non-invertibility comes from a zero eigenvalue. Hence, there 
is a sequence of $L^2$-spinors $\phi_i$ on 
$(M_{\infty}, \overline{g}_{\delta_i})$ with 
$D^{\overline{g}_{\delta_i}}\phi_i = 0$ and 
$\int_{M_\infty} |\phi_i|^2 \, dv^{\overline{g}_{\delta_i}} = 1$. 
First, we note that $\overline{g}_{\delta_i} = g$ on 
$M_\infty \setminus (U_S(2\delta_i)\times [-t_0+\delta_i,\infty))$. 
Set $U(\delta) \definedas U_S(\delta)\times (-t_0, \infty)$.

Fix $\gamma > 0$. Then for all $i$ with $2\delta_i<\gamma$ and all
compact subsets $K$ of $M_\infty \setminus U(\gamma)\subset
M_\infty\setminus (U_S(2\delta_i)\times [-t_0+\delta_i,\infty))$
we have from Lemma~\ref{C^2-estimate} that there is a 
constant $C = C(K, M_\infty \setminus U(\gamma),g)$ with 
\[
\Vert\phi_i\Vert_{C^2(K)}
\leq 
C\Vert \phi_i 
\Vert_{L^2(M_\infty \setminus U(\gamma),g)}
\leq C.
\]
From the Theorem of Ascoli, Lemma~\ref{lemma_Ascoli}, we
obtain that $\phi_i\to \phi$ strongly in $C^1(K)$ and $D^g\phi=0$
weakly on each $K$. Moreover, $\phi_i \to \phi$ weakly in
$L^2(M_\infty \setminus U(\gamma), g)$ and 
$\Vert \phi \Vert_{L^2(M_\infty\setminus U(\gamma), g)} \leq 1$. 
Thus, if $\gamma \to 0$ we obtain that $D^g\phi=0$ weakly on 
$M_\infty \setminus (S \times [-t_0,\infty))$ and $\phi \in
L^2(M_\infty,g)$. From Lemma~\ref{removal_singularities} we then 
have $D^g\phi=0$ weakly on $M_\infty$. 

It remains to show that $\phi$ is not identically zero. We prove this
by contradiction and assume that $\phi = 0$. Thus, due to the
Rellich-Kondrakov Theorem $\phi_i \to 0$ in $L^2(g)$ on compact
subsets. In particular, 
$\int_K |\phi_i|^2 \, dv^{\overline{g}_{\delta_i}} \to 0 $ as $i\to\infty$
for each compact $K \subset M_\infty$, since
$|\overline{g}_{\delta_i} - g|\to 0$ on compact $K$. 

To study $\phi_i$ on $\partial M \times (0,\infty)$ we set
$l_i(t)^2 \definedas \int_{\partial M\times\{t\}} |\phi_i|^2 
\, dv^{\partial \overline{g}_{\delta_i}}$. From part (3) of Proposition
\ref{propbdrygap} we have $l_i(s)^2 \leq 
(2\Lambda)^{-1} e^{-2\Lambda (s-1)} \int_0^1 l_i(t)^2 \, dt$. 
Integrating this gives us 
\[
\int_1^\infty l_i(s)^2 \, ds 
\leq 
\frac{1}{2\Lambda} 
\int_1^\infty e^{-2\Lambda(s-1)} \, ds 
\int_0^1 l_i(t)^2 \, dt
=
\frac{1}{4\Lambda^{2}} \int_0^1 l_i(t)^2 \, dt 
\]
and, thus,
\[ \begin{split}
1 &=
\int_{M_\infty} |\phi_i|^2 \, dv^{\overline{g}_{\delta_i}} \\
&=
\int_{M} |\phi_i|^2 \, dv^{\overline{g}_{\delta_i}} 
+ \int_0^{\infty} l_i(t)^2 \, dt\\ 
&\leq 
\int_{M} |\phi_i|^2 \, dv^{\overline{g}_{\delta_i}} 
+ \left( 1 + \frac{1}{4\Lambda^{2}} \right) 
\int_0^1 l_i(t)^2 \, dt\\ 
&\leq 
\left( 1 + \frac{1}{4\Lambda^{2}} \right) 
\int_{M\cup (\partial M\times [0,1])} 
|\phi_i|^2 \, dv^{\overline{g}_{\delta_i}} , 
\end{split} \]
which gives a contradiction since $\phi_i$ is supposed to tend to zero
in $L^2(g)$ on the compact set $M \cup (\partial M \times [0,1])$. 
Thus, we obtained a nontrivial $L^2(g)$-harmonic spinor $\phi$ on
$(M_\infty, g)$ which contradicts the assumption that $g\in \Rinv(M)$. 
\end{proof}

After the first step we replace $g$ by $\overline{g}_{\delta_0}$ for
some $\delta_0$ sufficiently small, we also set 
$-t_1 \definedas -t_0 + 2\delta_0$ and 
$R_{\rm max} \definedas \delta_0$ 
(and perhaps we make the spectral gap of $D^g$ a bit smaller). The
conclusion of this first step is then that we may assume that 
\[
\overline{g}_{\delta_0} = \partial g + dt^2 = h + \xi^{n-k} + dt^2
\]
on $U_S( R_{\rm max} ) \times (-t_1, \infty)$ and
\[
\overline{g}_{\delta_0} = g
\]
outside $U_S(2 R_{\rm max} ) \times (-t_1 -R_{\rm max}, \infty)$,
while the spectral gap is the same.

From now on we assume that the metric $g$ has already the form
$\overline{g}_{\delta_0}$. 

%%%%%%%%%%%%%%%%%%%%%%%%%%%%%%%%%%%%%%%%%%%%%%%%%%%%%%%%%%%%%%%%%
\subsection{Step 2: First surgery}
%%%%%%%%%%%%%%%%%%%%%%%%%%%%%%%%%%%%%%%%%%%%%%%%%%%%%%%%%%%%%%%%%

We now perform a standard surgery of codimension $n-k+1$ on
$(M_\infty,g)$ along the embedding 
\[ \begin{aligned}
S^k \times B^{n-k+1} 
= S^k \times B^{n-k} \times (-\epsilon, \epsilon) 
&\to \partial M \times (-t_1, \infty) \subset M_{\infty} 
\\ 
f^{\rho}: (x,y,t) &\mapsto (f(x,y), - 2\rho + t)
\end{aligned} \]
where $-t_1<-2\rho-\epsilon<-2\rho+\epsilon<-\rho$. Here $\rho$ is a 
parameter which will be specified later, and the first equality in 
the embedding comes from the choice of a diffeomorphism 
$B^{n-k+1} \simeq B^{n-k} \times (-\epsilon, \epsilon)$. 

Denote by $M'_\infty$ the resulting manifold after surgery and by $M'$
the same manifold without the cylindrical end. We will construct a
family of metrics $g'_{\rho}$ on $M'_\infty$ which coincide with $g$
outside the distance $\rho$ of the surgery sphere. 

On $U_S( R_{\rm max} ) \times (-t_1, \infty)$ the metric $g$ has the
product form 
\[
g = \partial g + dt^2 = h + \xi^{n-k} + dt^2 = h + \xi^{n-k+1}.
\]
The surgery in this step is centered around the surgery sphere
$S_{\rho} \definedas S \times \{ -2\rho \} 
\subset \partial M \times (-t_1, 0]$. We write the flat metric
$\xi^{n-k+1}$ in polar coordinates around 
$(0,-2\rho) \in B^{n-k} (R_{\rm max}) \times (-t_1, 0]$, and we get
\[
g = h + d\hat{r}^2 + \hat{r}^2 \sigma^{n-k} 
\]
where $\hat{r} = \sqrt{r^2 + (t+2\rho)^2}$ is the distance to the
point $(0,-2\rho)$ and $r$ is the distance to 
$S \times (-t_0, \infty)$. Set 
$U_{S_{\rho}}(R) \definedas \{ \hat{r} \leq R \}\subset M_\infty$. 
Figure~\ref{figure_step2c} shows the placement of $S_{\rho}$ and 
$U_{S_{\rho}}(R)$.
\begin{figure}[h]
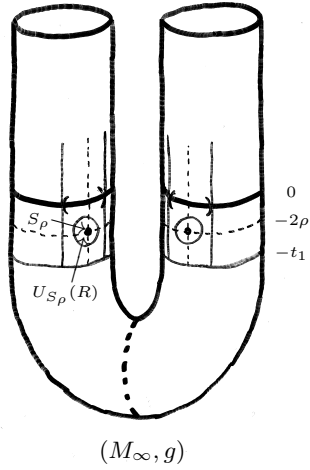

\begin{lpic}[]{FIGS/STEP2_C(0.33)}
\lbl[]{75,10;{\small $(M_{\infty}, g)$}}
\lbl[]{135,115;{\tiny $0$}}
\lbl[]{135,103;{\tiny $-2\rho$}}
\lbl[]{135,90;{\tiny $-t_1$}}
\lbl[]{33,103;{\tiny $S_{\rho}$}}
\lbl[]{45,101,-15;{\small $\longrightarrow$}}
\lbl[]{44,73;{\tiny $U_{S_{\rho}}(R)$}}
\lbl[]{46,88,75;{\small $\xrightarrow{\phantom{XX}}$}}
\end{lpic}
\caption{The surgery sphere $S_{\rho}$.}
\label{figure_step2c}
\end{figure}

We divide $M$ into three pieces:
\begin{itemize}
\item[$\{A\}$] 
$M \setminus U_{S_{\rho}}(R_{\rm max}/2)$
\item[$\{B\}$] 
$U_{S_{\rho}}(R_{\rm max}/2) \setminus U_{S_{\rho}}(\rho /2)
\simeq
S^k \times (\rho/2, R_{\rm max}/2) \times S^{n-k} $
\item[$\{C\}$]
$U_{S_{\rho}}(\rho /2)
\simeq
S^k \times B^{n-k+1} (\rho/2)$
\end{itemize}
The manifold $M'$ after surgery is obtained by replacing
$\{C\}$ by 
\begin{itemize}
\item[$\{C'\}$]
$B^{k+1} \times S^{n-k}$,
\end{itemize}
see Figure~\ref{figure_step2a}.
\begin{figure}[h]
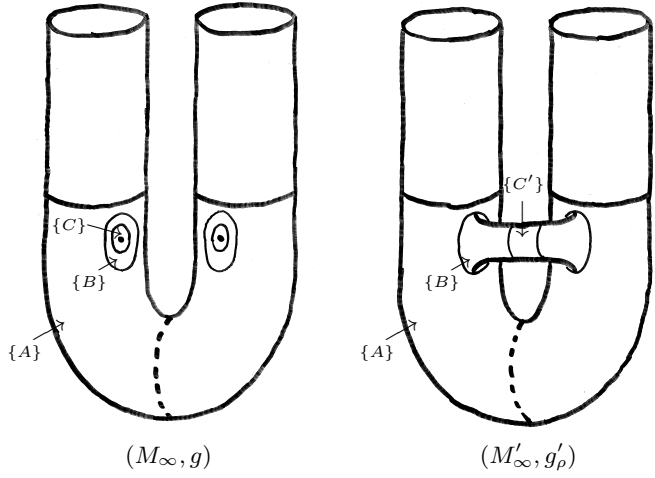

\begin{lpic}[]{FIGS/STEP2_A(0.33)}
\lbl[]{75,10;{\small $(M_{\infty}, g)$}}
\lbl[]{220,10;{\small $(M'_{\infty}, g'_{\rho})$}}
\lbl[]{17,52;{\tiny $\{A\}$}}
\lbl[]{27,60,30;{\small $\longrightarrow$}}
\lbl[]{43,80;{\tiny $\{B\}$}}
\lbl[]{51,87,30;{\small $\rightarrow$}}
\lbl[]{35,103;{\tiny $\{C\}$}}
\lbl[]{49,101,-15;{\small $\longrightarrow$}}
\lbl[]{159,52;{\tiny $\{A\}$}}
\lbl[]{169,60,30;{\small $\longrightarrow$}}
\lbl[]{185,80;{\tiny $\{B\}$}}
\lbl[]{193,87,30;{\small $\rightarrow$}}
\lbl[]{218,120;{\tiny $\{C'\}$}}
\lbl[]{217,106,-90;{\small $\longrightarrow$}}
\end{lpic}
\caption{First surgery.}
\label{figure_step2a}
\end{figure}

We define metrics $g'_{\rho}$ on $M'$ by 
\begin{itemize}
\item[$\{A\}$] 
$g'_{\rho} \definedas g$
\item[$\{B\}$] 
$g'_{\rho} \definedas 
h + d\hat{r}^2 + \alpha_{\rho}(\hat{r})^2 \sigma^{n-k} $
\item[$\{C'\}$]
$g'_{\rho} \definedas
H + (2\rho/3)^2 \sigma^{n-k} $
\end{itemize}
where the function $\alpha_{\rho}$ is as in 
Figure~\ref{figure_alpharho}
\begin{figure}[h]
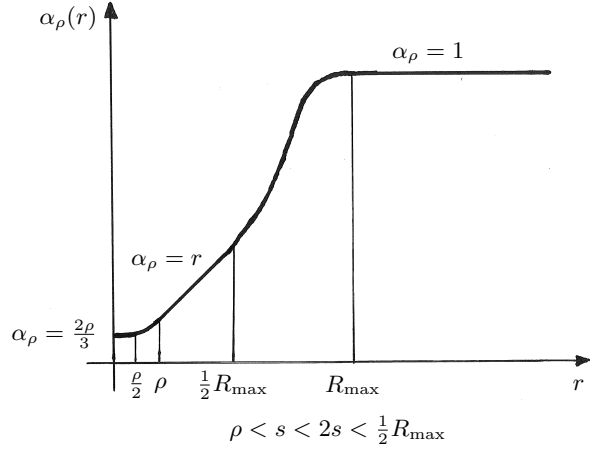

\begin{lpic}[]{FIGS/STEP2_B(0.4)}
\lbl[r]{25,137;{\small $\alpha_{\rho}(r)$}}
\lbl[r]{25,32;{\small $\alpha_{\rho} = \frac{2\rho}{3} $}}
\lbl[br]{60,53;{\small $\alpha_{\rho} = r$}}
\lbl[b]{135,122;{\small $\alpha_{\rho} = 1$}}
\lbl[]{38,15;{\small $\frac{\rho}{2}$}}
\lbl[]{46,15;{\small $\rho$}}
\lbl[]{70,15;{\small $\frac{1}{2}R_{\rm max}$}}
\lbl[]{110,15;{\small $R_{\rm max}$}}
\lbl[]{185,15;{\small $r$}}
\lbl[]{105,0;{\small $\rho < s < 2s < \frac{1}{2} R_{\rm max}$}}
\end{lpic}
\caption{The function $\alpha_{\rho}$.}
\label{figure_alpharho}
\end{figure}
and $H$ is a metric on $B^{k+1}$ which is equal to $d\hat{r}^2 + h$
near the boundary which is possible since near the boundary $B^{k+1}$
is diffeomorphic to $S^k\times [0,\epsilon]$. 
Figure~\ref{figure_step2a} shows $(M_\infty,g)$ to the left and 
$(M'_\infty,g'_{\rho})$ after surgery to the right.

Define the subset $U'(R) \subset M'_\infty$ by 
$M'_\infty \setminus U'(R) = M_\infty \setminus U_{S_{\rho}}(R)$ for 
$R>\frac{\rho}{2}$. Note that $\alpha_\rho (\hat{r})=\hat{r}$ on 
$\left[ \rho, \frac{1}{2} R_{\rm max} \right]$ and, thus, 
$g'_{\rho} = g$ on 
$M'_\infty \setminus U'(\rho)$. Note also that the definition of 
$g'_{\rho}$ does not involve $\alpha_{\rho}(\hat{r})$ for 
$\hat{r} > R_{\rm max}/2$. This part is defined so that we easily can
extend the function $\alpha_{\rho}$ to all of $M'_\infty$. We set
$\alpha_{\rho} = 1$ on $M'_\infty \setminus U'(R_{\rm max})$ and
$\alpha_{\rho} = 2\rho/3$ on $\{C'\}$.

We define a conformally related metric on $M'$ by
\[
\widetilde{g}_{\rho} \definedas \alpha_{\rho}^{-2} g'_{\rho}.
\]
On $\{B\} + \{C'\}$ we have that $\widetilde{g}_{\rho}$ is a product 
metric,
\[
\widetilde{g}_{\rho} = \alpha_{\rho}^{-2} H + \sigma^{n-k},
\]
where $H$ is defined as $d\hat{r}^2 + h$ on $\{B\}$. 
For the proof of Proposition~\ref{first_surgery_prop} we need the
following Lemma, similar to 
\cite[Proposition~3.5]{ammann_dahl_humbert_09}. 

\begin{lemma} \label{fundamental_estimate_first_n}
Let $s$ be such that $\rho < s < 2s < R_{\rm max} /2$ and assume that 
$D^{g'_{\rho}} \psi'_{\rho} = 0$. Then 
\[
\int_{U'(2s) \setminus U'(s)} | \psi'_{\rho} |^2 \, dv^{g'_{\rho}}
\geq
\frac{1}{8} 
\int_{U'(s)} |\psi'_{\rho}|^2 \, dv^{g'_{\rho}}.
\]
\end{lemma}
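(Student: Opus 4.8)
The plan is to work with the conformally rescaled metric $\widetilde{g}_{\rho} = \alpha_{\rho}^{-2} g'_{\rho}$, which is a product metric $\alpha_{\rho}^{-2} H + \sigma^{n-k}$ on the region $\{B\} + \{C'\}$, hence has an honest cylindrical/warped structure adapted to the factor $S^{n-k}$. By the conformal transformation rule \eqref{Dconfchange}, a harmonic spinor $\psi'_{\rho}$ for $D^{g'_{\rho}}$ corresponds to a harmonic spinor $\widetilde{\psi}_{\rho} \definedas \alpha_{\rho}^{\frac{n-k}{2}} \beta^{g'_{\rho}}_{\widetilde{g}_{\rho}} \psi'_{\rho}$ (with the appropriate power of $\alpha_{\rho}$ coming from the ambient dimension $n+1$) for $D^{\widetilde{g}_{\rho}}$ on $\{B\} + \{C'\}$; I would first record this correspondence and note that the $L^2(g'_{\rho})$-norm of $\psi'_{\rho}$ over a region in $\{B\}+\{C'\}$ is comparable, with explicit powers of $\alpha_{\rho}$, to a weighted $L^2$-norm of $\widetilde{\psi}_{\rho}$. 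Since $\alpha_{\rho}$ is bounded between $2\rho/3$ and $1$, and on the annulus $[\rho, R_{\rm max}/2]$ we have $\alpha_{\rho}(\hat r) = \hat r$, these powers are controlled.

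Next I would set up the coordinate $\hat r$ on $\{B\}$ (so that on $\{B\}$ the metric $\widetilde{g}_{\rho}$ is $d\hat r^2 + h + \sigma^{n-k}$ after noting $\alpha_\rho^{-2} H = \alpha_\rho^{-2}(d\hat r^2 + h)$; in fact the cleanest route is to reparametrize by $\tau$ with $d\tau = d\hat r/\hat r$ so that the metric on $\{B\}$ becomes a genuine cylinder $d\tau^2 + (\text{something}) + \sigma^{n-k}$, turning the region between $U'(s)$ and $U'(R_{\rm max}/2)$ into a cylinder of length $\log(R_{\rm max}/(2s))$ or so). On such a cylinder the Dirac operator splits as $\partial_\tau$ plus a $\tau$-independent self-adjoint operator whose kernel behaviour on the cross-section is governed by the lowest eigenvalue of $D^{\sigma^{n-k}}$ (which is $\geq \frac{n-k}{2} \geq 1$ since $n-k \geq 2$). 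The key point is the same ODE argument as in Proposition~\ref{propbdrygap}(3): for a harmonic spinor on the cylinder, the function $\hat l(\tau)^2 \definedas \int_{\text{cross-section}} |\widetilde\psi_\rho|^2$ satisfies $\hat l'' \geq c\, \hat l$ for a constant $c > 0$ depending only on the dimension (not on $\rho$), which forces exponential decay as one moves toward smaller $\hat r$, i.e. toward $\{C'\}$. Integrating this differential inequality — together with a boundary term estimate at the $\{C'\}$ end, where the cross-section is $S^{n-k}$ of radius $2\rho/3$ and the relevant eigenvalue bound still holds — gives that the $L^2$-mass on $U'(s)$ is at most a fixed fraction (here $\tfrac18$, after bookkeeping) of the mass on the adjacent shell $U'(2s) \setminus U'(s)$. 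Transporting this back through the conformal change and the bounds on $\alpha_{\rho}$ yields the claimed inequality for $\psi'_{\rho}$ in the metric $g'_{\rho}$.

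The main obstacle I anticipate is twofold. First, one must handle the piece $\{C'\} = B^{k+1} \times S^{n-k}$ carefully: the differential-inequality/exponential-decay argument is cleanest on the product cylinder $\{B\}$, but the mass on $U'(s) \cap \{C'\}$ must also be absorbed, which requires either extending the cylindrical estimate through $\{C'\}$ using that $D^{H}$ on $B^{k+1}$ (with its chosen metric) contributes nonnegatively, or a separate direct estimate comparing $\int_{\{C'\}} |\psi'_\rho|^2$ to the value at the gluing sphere $\hat r = \rho/2$; in either case one needs the bound to be uniform in $\rho$. Second, the conformal factor $\alpha_{\rho}$ degenerates like $\rho$ as $\rho \to 0$, so one must be scrupulous that the constants coming from the powers of $\alpha_{\rho}$ in the $L^2$-comparison cancel against the length of the cylinder $\{B\}$ (which grows like $\log(1/\rho)$ in the $\tau$-coordinate), leaving a $\rho$-independent constant — this is exactly the scale-invariance that makes the conformal rescaling the right move, but it has to be tracked explicitly. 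Once those are in hand, the rest is the ODE comparison already rehearsed in the proof of Proposition~\ref{propbdrygap}(3).
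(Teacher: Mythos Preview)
Your starting move---the conformal rescaling to $\widetilde g_\rho = \alpha_\rho^{-2} g'_\rho$ and the transformed spinor $\widetilde\psi_\rho$---is exactly right (the correct power is $\alpha_\rho^{n/2}$, since the ambient dimension is $n+1$; your $(n-k)/2$ is a slip). But the route you propose from there has a genuine gap.

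You want to reparametrize $\hat r \mapsto \tau$ with $d\tau = d\hat r/\hat r$ and then run the ODE argument $l'' \geq c\,l$ from Proposition~\ref{propbdrygap}(3). The problem is that on $\{B\}$ (where $\alpha_\rho = \hat r$) the rescaled metric becomes
\[
\widetilde g_\rho = d\tau^2 + e^{-2\tau} h + \sigma^{n-k},
\]
which is \emph{not} a cylinder: the $S^k$-factor metric $e^{-2\tau} h$ depends on $\tau$. Consequently the Dirac operator does \emph{not} split as $\partial_\tau$ plus a $\tau$-independent cross-section operator; there are extra terms from the $\tau$-derivative of the cross-section metric (a warped-product mean-curvature contribution), and the clean identity $(D^g)^2 = (D^{\rm cross})^2 - (\nabla_{\partial_\tau})^2$ underlying Proposition~\ref{propbdrygap}(3) fails. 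The cross-section Dirac does have a $\tau$-uniform spectral gap $\geq (n-k)/2$ coming from the fixed $\sigma^{n-k}$ factor, but that alone is not enough to run the ODE comparison without controlling the extra terms. So as written, your plan does not close.

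The paper's argument bypasses this entirely. The key observation is that on all of $U'(2s)$ the metric $\widetilde g_\rho$ is a genuine Riemannian product $\alpha_\rho^{-2} H + \sigma^{n-k}$ (with $H$ the fixed metric on $B^{k+1}$), so \cite[Lemma~2.5]{ammann_dahl_humbert_09} gives the global spectral lower bound $\|D^{\widetilde g_\rho}\varphi\|^2 \geq \frac{(n-k)^2}{4}\|\varphi\|^2$ for any compactly supported $\varphi$. One then takes a cut-off $\eta$ with $\eta = 1$ on $U'(s)$, $\eta = 0$ outside $U'(2s)$, $|d\eta|_{g'_\rho} \leq 2/s$, applies the spectral bound to $\eta\widetilde\psi_\rho$, and uses $D^{\widetilde g_\rho}(\eta\widetilde\psi_\rho) = \grad^{\widetilde g_\rho}\eta \cdot \widetilde\psi_\rho$. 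Converting the $\widetilde g_\rho$-integrals back to $g'_\rho$-integrals via $dv^{\widetilde g_\rho} = \alpha_\rho^{-(n+1)} dv^{g'_\rho}$ and $|\widetilde\psi_\rho|^2 = \alpha_\rho^{n}|\psi'_\rho|^2$, together with the elementary bounds $\alpha_\rho \leq 2s$ on $U'(2s)\setminus U'(s)$ and $\alpha_\rho \leq s$ on $U'(s)$, makes all the $\rho$- and $s$-dependence cancel algebraically and yields the constant $\tfrac18$. This also disposes of both obstacles you flagged: $\{C'\}$ is handled automatically because the product structure and the spectral bound hold there too, and no delicate tracking of $\log(1/\rho)$ against degenerating conformal factors is needed.
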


\begin{proof}
We make the conformal change 
$\widetilde{g}_{\rho} \definedas \alpha_{\rho}^{-2} g'_{\rho}$
and set 
\[
\widetilde{\psi}_{\rho} 
\definedas
\alpha_{\rho}^{\frac{n}{2}} \beta^{g'_{\rho}}_{\widetilde{g}_{\rho}}
\psi'_{\rho} ,
\]
observe here that we are working on the manifold $M$ which has
dimension $n+1$. From \eqref{Dconfchange} we then have
\[
D^{\widetilde{g}_{\rho}} \widetilde{\psi}_{\rho} = 0.
\]
Choose a cut-off function $\eta$ on $M'$ with 
$\eta = 1$ on $U'(s)$, $\eta = 0$ on $U'(2s)$. 
Since $d\eta$ is supported in $M'\setminus U'(\rho)$ where
$g'_\rho=g$ we may assume
\[
|d\eta|_{g'_{\rho}} \leq 2/s
\]
which implies 
\[ 
|d\eta|^2_{\widetilde{g}_{\rho}} 
= 
\alpha_{\rho}^2 |d\eta|^2_{g'_{\rho}}
\leq 
4 \alpha_{\rho}^2 /s^2.
\]
We have
\[
D^{\widetilde{g}_{\rho}} (\eta \widetilde{\psi}_{\rho} )
= 
\grad^{\widetilde{g}_{\rho}} \eta \cdot \widetilde{\psi}_{\rho}
\]
which is supported in $U'(2s) \setminus U'(s)$ and can be estimated by
\begin{equation} \label{Dwidetildeg}
| D^{\widetilde{g}_{\rho}} (\eta \widetilde{\psi}_{\rho} ) |^2
=
|\grad^{\widetilde{g}_{\rho}} \eta |_{\widetilde{g}_{\rho}}^2 
|\widetilde{\psi}_{\rho}|^2
\leq 
\frac{4 \alpha_{\rho}^2}{s^2} 
|\widetilde{\psi}_{\rho}|^2.
\end{equation}
Since $\widetilde{g}_{\rho} = \alpha_{\rho}^{-2} H + \sigma^{n-k}$ on 
$U'(2s)$ we have a lower spectral bound, see 
\cite[Lemma~2.5]{ammann_dahl_humbert_09} 
\begin{equation} \label{spectralbound}
\begin{split}
\int_{U'(2s)} 
|D^{\widetilde{g}_{\rho}} (\eta \widetilde{\psi}_{\rho} ) |^2 
\, dv^{\widetilde{g}_{\rho}}
&\geq 
\frac{(n-k)^2}{4}
\int_{U'(2s)} 
| \eta \widetilde{\psi}_{\rho} |^2 
\, dv^{\widetilde{g}_{\rho}} \\
&\geq 
\int_{U'(2s)} 
| \eta \widetilde{\psi}_{\rho} |^2 
\, dv^{\widetilde{g}_{\rho}}.
\end{split}
\end{equation}
Using \eqref{Dwidetildeg} we get for the left-hand side,
\[ \begin{split}
\int_{U'(2s)} 
|D^{\widetilde{g}_{\rho}} (\eta \widetilde{\psi}_{\rho} ) |^2 
\, dv^{\widetilde{g}_{\rho}}
&\leq
\frac{4}{s^2} 
\int_{U'(2s) \setminus U'(s)}
\alpha_{\rho}^2 | \widetilde{\psi}_{\rho} |^2 
\, dv^{\widetilde{g}_{\rho}} \\
&=
\frac{4}{s^2} 
\int_{U'(2s) \setminus U'(s)}
\alpha_{\rho} | \psi'_{\rho} |^2 
\, dv^{g'_{\rho}} \\
&\leq
\frac{8}{s} 
\int_{U'(2s) \setminus U'(s)} | \psi'_{\rho} |^2 \, dv^{g'_{\rho}} 
\end{split} \]
where we used that $\alpha_{\rho} \leq 2s$ in the final step, recall
here that $\rho < s$ and, thus, $U'(2s)\setminus U'(s) = 
U_{S_\rho}(2s)\setminus U_{S_\rho}(s)$. Inserted in
\eqref{spectralbound} we get 
\begin{equation} \label{spectralbound2}
\frac{8}{s}
\int_{U'(2s) \setminus U'(s)} | \psi'_{\rho} |^2 \, dv^{g'_{\rho}}
\geq
\int_{U'(2s)} 
| \eta \widetilde{\psi}_{\rho} |^2 
\, dv^{\widetilde{g}_{\rho}}.
\end{equation}
Here we have for the right-hand side,
\[ \begin{split}
\int_{U'(2s)} 
| \eta \widetilde{\psi}_{\rho} |^2 
\, dv^{\widetilde{g}_{\rho}}
&\geq
\int_{U'(s)} 
| \widetilde{\psi}_{\rho} |^2 
\, dv^{\widetilde{g}_{\rho}} \\
&=
\int_{U'(s)} 
\alpha_{\rho}^{-1} | \psi'_{\rho} |^2 
\, dv^{g'_{\rho}} \\
&\geq
\frac{1}{s}
\int_{U'(s)} 
| \psi'_{\rho} |^2 \, dv^{g'_{\rho}} ,
\end{split} \]
where we used that $\alpha_{\rho} \leq s$ in the final step.
Inserted in \eqref{spectralbound2} we get
\[
\frac{8}{s} 
\int_{U'(2s) \setminus U'(s)} | \psi'_{\rho} |^2 \, dv^{g'_{\rho}}
\geq
\frac{1}{s}
\int_{U'(s)} 
| \psi'_{\rho} |^2 \, dv^{g'_{\rho}} ,
\]
or
\[ 
\int_{U'(2s) \setminus U'(s)} | \psi'_{\rho} |^2 \, dv^{g'_{\rho}}
\geq
\frac{1}{8} \int_{U'(s)} | \psi'_{\rho} |^2 \, dv^{g'_{\rho}}
\]
which is the claim of the Lemma. 
\end{proof}

\begin{prop} \label{first_surgery_prop}
$g'_\rho \in \Rinv(M')$ for all sufficiently small $\rho$.
\end{prop}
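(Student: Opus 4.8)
The plan is to run the scheme of Proposition~\ref{first_approx_prop} once more, with Lemma~\ref{fundamental_estimate_first_n} now playing the role that the exponential-decay estimate played there: it will prevent the $L^2$-mass of a hypothetical harmonic spinor from concentrating in the surgery neck. First I would note that the surgery is localized --- $g'_\rho$ coincides with $g$ on $M'_\infty\setminus U'(\rho)$, and $U'(\rho)$ corresponds to the $\rho$-neighbourhood $U_{S_\rho}(\rho)$ of $S_\rho=S\times\{-2\rho\}$, which lies strictly below $\partial M\times\{0\}$ --- so the cylindrical end $(\partial M\times[0,\infty),\partial g+dt^2)$ is unchanged. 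By Lemma~\ref{lemmabdrygap} and Proposition~\ref{propbdrygap}(1) the essential spectrum of $D^{g'_\rho}$ is therefore $(-\infty,-\Lambda]\cup[\Lambda,\infty)$ for the same $\Lambda>0$ as after Step~1, so it suffices to exclude a zero eigenvalue.

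I would argue by contradiction: suppose $\rho_i\to0$ with $D^{g'_{\rho_i}}\psi'_i=0$ and $\|\psi'_i\|_{L^2(M'_\infty,g'_{\rho_i})}=1$. For any neighbourhood $V$ of $S$ and any compact $K\subset M_\infty\setminus V$, the metrics $g'_{\rho_i}$ and $g$ agree on $K$ once $\rho_i$ is small, so Lemma~\ref{C^2-estimate} bounds $\|\psi'_i\|_{C^2(K)}$ by a constant depending only on $K$ and $g$ times $\|\psi'_i\|_{L^2(M_\infty\setminus V,g)}\leq1$. Lemma~\ref{lemma_Ascoli} and an exhaustion/diagonal argument then give a subsequence converging in $C^1_{\mathrm{loc}}$ to a spinor $\psi$ on $M_\infty\setminus S$ with $D^g\psi=0$ weakly there and $\|\psi\|_{L^2(M_\infty,g)}\leq1$. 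Since $\dim S=k\leq n-2$, Lemma~\ref{removal_singularities} --- applied with $B$ equal to the half-cylinder over $S$ in $M_\infty$, a submanifold of dimension $k+1$ --- upgrades this to $D^g\psi=0$ weakly on all of $M_\infty$, whence $\psi\equiv0$ because $g\in\Rinv(M)$.

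It remains to contradict $\int_{M'_\infty}|\psi'_i|^2\,dv^{g'_{\rho_i}}=1$ given $\psi'_i\to0$ in $L^2_{\mathrm{loc}}(M_\infty\setminus S)$, and this is the heart of the matter. I would apply Lemma~\ref{fundamental_estimate_first_n} with the \emph{fixed} radius $s\definedas R_{\rm max}/8$: for $i$ large one has $\rho_i<s<2s<R_{\rm max}/2$, hence $\int_{U'(s)}|\psi'_i|^2\leq 8\int_{U'(2s)\setminus U'(s)}|\psi'_i|^2$; but $U'(2s)\setminus U'(s)=U_{S_{\rho_i}}(2s)\setminus U_{S_{\rho_i}}(s)$ is, for $i$ large, contained in a fixed compact subset of $M_\infty\setminus S$ carrying the metric $g$, so its contribution --- and therefore $\int_{U'(s)}|\psi'_i|^2$ --- tends to $0$. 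Writing $\int_{M'_\infty}|\psi'_i|^2=\int_{U'(s)}|\psi'_i|^2+\int_{M'_\infty\setminus U'(s)}|\psi'_i|^2$ and using $M'_\infty\setminus U'(s)=M_\infty\setminus U_{S_{\rho_i}}(s)$, the second term splits into the contribution of the fixed compact set $(M\cup\partial M\times[0,1])\setminus U_{S_{\rho_i}}(s)$, which tends to $0$, and the contribution of $\partial M\times(1,\infty)$, which by Proposition~\ref{propbdrygap}(3) satisfies $\int_1^\infty l_i(t)^2\,dt\leq\tfrac1{4\Lambda^2}\int_0^1 l_i(t)^2\,dt$ with $l_i(t)^2=\int_{\partial M\times\{t\}}|\psi'_i|^2\,dv^{\partial g}$; and $\int_0^1 l_i(t)^2\,dt$ again splits into a piece over $(\partial M\times[0,1])\setminus U_{S_{\rho_i}}(s)$ tending to $0$ and a piece over $(\partial M\times[0,1])\cap U_{S_{\rho_i}}(s)\subset U'(s)$ bounded by $\int_{U'(s)}|\psi'_i|^2\to0$. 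Hence $1=o(1)$, a contradiction, proving $g'_\rho\in\Rinv(M')$ for all sufficiently small $\rho$. The main obstacle is precisely this neck-concentration step; it works because, after shrinking $\rho$, the controlling annulus in Lemma~\ref{fundamental_estimate_first_n} is a \emph{fixed} region bounded away from $S$, while the cylindrical tail is handled by the same exponential-decay argument as in Step~1.
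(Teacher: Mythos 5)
Your proposal is correct and follows essentially the same strategy as the paper's proof: establish the uniform spectral gap via the unchanged boundary metric, extract a limiting harmonic $L^2$-spinor $\phi$ on $(M_\infty,g)$ using the $C^2$-estimate, Ascoli, a diagonal argument and removal of singularities along $S\times[-t_0,\infty)$, and then rule out $\phi\equiv0$ by combining Lemma~\ref{fundamental_estimate_first_n} (to control the neck) with the exponential-decay estimate from Proposition~\ref{propbdrygap}(3) (to control the cylindrical tail). The only difference is cosmetic: the paper first establishes a positive lower bound $c$ for the mass on $M'\cup(\partial M\times[0,1])$, infers that the neck $U'(\rho_i)$ must then carry mass $\gtrsim c$, and uses Lemma~\ref{fundamental_estimate_first_n} to contradict $\phi_i\to0$ on the fixed compact annulus; you instead run Lemma~\ref{fundamental_estimate_first_n} in the other direction, showing directly that the neck, the compact part, and (via exponential decay) the cylinder all have vanishing mass, so the total $L^2$-norm cannot equal $1$. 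This is a logically equivalent rearrangement of the same contradiction and, if anything, slightly tidier in its bookkeeping.
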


\begin{proof}
To prove this Proposition we first observe that since the boundary
metric $\partial g'_\rho = \partial g$ is independent of $\rho$ it
follows that the essential spectrum of $D^{g'_\rho}$ has a gap
around zero which is independent of $\rho$, see Proposition
\ref{propbdrygap}. We can then proceed as in the proof of Theorem~1.2
of \cite{ammann_dahl_humbert_09} and assume that there is a sequence
$\rho_i \to 0$ so that $D^{g'_{\rho_i}}$ has a harmonic
spinor $\phi_i \in L^2(M'_\infty, g'_{\rho_i})$. We normalize 
$\int_{M'_\infty} |\phi_i|^2 \, dv^{g'_{\rho_i}} = 1$.

Now let $\delta>0$. For all $\rho_i<\delta$ we have
$M'_{\infty}\setminus U'(\rho_i) = 
M_\infty\setminus U_{S_{\rho_i}}(\rho_i)$ and on this part 
$g_{\rho_i} = g$. Note that
$Z_\delta \definedas M_\infty\setminus U_{S_0}(3\delta) \subset
M'_{\infty}\setminus U'(\rho_i)$.
Then, by Lemma~\ref{C^2-estimate} we know that for each compact subset
$K \subset Z_\delta$ there is a constant $C>0$ with 
\[
\Vert \phi_i\Vert_{C^2(K)} \leq C \Vert \phi_i\Vert_{L^2(Z_\delta, g)}.
\]
Thus, $\Vert \phi_i\Vert_{C^2(K)}\leq C$. By Ascoli's Theorem, Lemma
\ref{lemma_Ascoli}, we know that $\phi_i$ then 
converges strongly in $C^1(K)$ to a spinor $\phi$. Since $Z_\delta$
tends to $M_\infty\setminus ( S\times \{0\})$ as $\delta \to 0$ a
diagonal subsequence argument tells us that 
$\phi \in C^1_{\rm loc}(M_\infty \setminus (S \times \{0\}))$ and
$D^g \phi = 0$ on $M_\infty \setminus (S \times \{0\})$.
From $\Vert \phi_i\Vert_{L^2(Z_{\delta},g)} \leq 1$ the spinors
$\phi_i$ converge weakly in $L^2$, the limit has to be the same
spinor $\phi$. Thus $\Vert \phi \Vert_{L^2(Z_\delta,g)} 
\leq \liminf \Vert \phi_i\Vert_{L^2(Z_{\delta},g)}\leq 1$ and 
$\Vert \phi \Vert_{L^2(M_\infty, g)}\leq 1$.
Now Lemma~\ref{removal_singularities} on removal of singularities
tells us that $D^g\phi = 0$ weakly on $(M_\infty, g)$.

It remains to show that $\phi$ is not identically zero. In the same
way as in the proof of Proposition~\ref{first_approx_prop} one shows
that $\phi_i \to \phi$ on compact subsets of $(M_\infty,g)$ and
\[
\int_{M' \cup (\partial M\times[0,1])} |\phi_i|^2 \, dv^{g'_{\rho_i}} \geq c
\]
for a positive constant $c$. This means that the $\phi_i$ cannot
escape to infinity. Assuming that $\phi=0$ we get
\[ \begin{split}
c &\leq 
\int_{M' \cup (\partial M\times[0,1])} |\phi_i|^2 \, dv^{g'_{\rho_i}} \\
&= 
\int_{(M'\setminus U'(\rho_i)) \cup (\partial M\times[0,1])} 
|\phi_i|^2 \, dv^{g'_{\rho_i}} 
+ \int_{U'(\rho_i)} |\phi_i|^2 \, dv^{g'_{\rho_i}} \\
&= 
\underbrace{
\int_{(M\setminus U_{S_{\rho_i}}(\rho_i)) \cup 
(\partial M \times[0,1])} |\phi_i|^2 \, dv^{g}
}_{\to 0} 
+ \int_{U'(\rho_i)} |\phi_i|^2 \, dv^{g'_{\rho_i}} .
\end{split} \]

Hence, we still have to rule out that $\phi_i$ concentrates in
the limit only in the attached neck. This follows immediately 
from Lemma~\ref{fundamental_estimate_first_n}. For 
$\rho_i<s<2s<R_{\rm max}/2$ we have $U'(2s) \setminus U'(s) = 
U_{S_{\rho_i}}(2s) \setminus U_{S_{\rho_i}}(s)\subset M$ and with
Lemma~\ref{fundamental_estimate_first_n} we get that 
\[
\int_{U_{S_{\rho_i}}(2s)\setminus U_{S_{\rho_i}}(s)} |\phi_i|^2 dv^g 
\geq 
\frac{1}{8} \int_{U'(s)} |\phi_i|^2 dv^{g'_{\rho_i}}\geq c
\] 
which contradicts that $\phi_i\to 0$ on compact subsets of $M_\infty$.

Thus, the harmonic spinors $\phi_i$ converge to a non-zero harmonic
$L^2$-spinor on $(M_\infty, g)$ as $i \to \infty$ which gives a 
contradiction since there are no such spinors for the metric $g$.
\end{proof}

%%%%%%%%%%%%%%%%%%%%%%%%%%%%%%%%%%%%%%%%%%%%%%%%%%%%%%%%%%%%%%%%%
\subsection{Step 3: Approximating with a product metric again}
%%%%%%%%%%%%%%%%%%%%%%%%%%%%%%%%%%%%%%%%%%%%%%%%%%%%%%%%%%%%%%%%%

We have now performed the first surgery, and we fix a metric 
$g' \definedas g'_{\rho_0}$ on $M'$ with the properties we need. That
is $g'\in \Rinv(M')$ and the metric is unchanged except near the
surgery sphere so the product structure from Step 1 in a
neighbourhood of $S\times [0,\infty]$ is preserved. The radius of this
neighbourhood will be again denoted by $R_{\rm max}$.

The surgery in the previous step consisted of removing a neighbourhood
$S^k \times B^{n-k+1}$ (this was $\{C\}$) of the surgery sphere, where
the radius of the ball $B^{n-k+1}$ is small. The boundary of the
resulting manifold is diffeomorphic to $S^k \times S^{n-k}$, and the
surgery is completed by attaching $B^{k+1} \times S^{n-k}$ (which we
called $\{C'\}$). 

We now define a submanifold $B \simeq B^{k+1}$ of $M'$, in
$M'_{\infty}$ we have $B \simeq \mR^{k+1} \simeq 
B^{k+1} \cup S^k \times [0,\infty)$. In $\{A\}$ and $\{B\}$
introduced in the previous subsection we set
$B \definedas S \times [-3\rho/2, \infty)$ (with respect to the
cylindrical structure), in part $\{C'\}$ we set $B \definedas 
B^{k+1} \times \{ p \}$ where $p \in S^{n-k}$ is chosen so that $B$ is
a smooth connected submanifold. The position of $B$ in 
$(M'_{\infty}, g')$ is illustrated in the left of Figure
\ref{figure_step3a}.

Let $i': B^{k+1} \to M'$ be the corresponding embedding with
$i'(B^{k+1}) = B$. The submanifold $B$ has a natural trivialization of
its normal bundle. 

In this section we will show that the metric $g'$
can be deformed to have a product structure in an arbitrarily small
neighborhood of $B$. In the subset 
$U_S( R_{\rm max} ) \times (0, \infty)$ of the cylindrical end we
already have 
\[
g' = \partial g + dt^2 = h + \xi^{n-k} + dt^2
\]
We will extend this product structure to a neighborhood of all of
$B$. 

Let $\pi^{\nu'}: \nu'\to B$ be the normal bundle of $B$ in $(M',g')$
and assume that a trivialization of $\nu'$ is given through a vector
bundle map $\iota': B^{k+1} \times \mR^{n-k} \to \nu'$ such that
$(\pi^{\nu'} \circ \iota')(p,0)=i'(p)$ for $p\in B^{k+1}$. Further
we assume that $\iota'$ is fiberwise an isometry when the fibers
$\mR^{n-k}$ of $B^{k+1}\times \mR^{n-k}$ are given the standard
metric, and the fibers of $\nu'$ have the metric induced by $g'$. For
sufficiently small $R$ we get an embedding $f' \definedas 
\exp^{\nu'} \circ \iota': B^{k+1} \times B^{n-k}(R) \to M'$. 
We define an open neighborhood of $B$ by 
\[
U_B(R) \definedas
(\exp^{\nu'}\circ\iota')(B^{k+1}\times B^{n-k}(R))
\]
for $R$ small enough. Let $h'$ denote the pullback by $i'$ to
$B^{k+1}$ of the restriction of $g'$ to the tangent bundle of $B$, 
and let $r(x)$ be the distance from the point $x \in M'$ to $B$. 
Note that in the cylindrical end $\partial M'\times [-t_1, \infty)$ 
we have $h' = h + dt^2$ and $r$ coincides with the previous definition.

\begin{prop} \label{Second_approx_prop}
The metric $g' \in \Rinv(M')$ can be arbitrarily closely approximated
by metrics $\overline{g}'_{\delta} \in \Rinv(M')$ which have the form
\[
\overline{g}'_{\delta} = h' + \xi^{n-k}
\]
on 
$U_B(\delta)$ and 
\[
\overline{g}'_{\delta} = g'
\]
outside $U_B(2\delta)$ and on the cylindrical end 
$\partial M' \times [0,\infty)$.
\end{prop}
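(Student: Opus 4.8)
The proof follows the same template as Step 1 (Proposition~\ref{first_approx_prop}): first define the approximating metrics $\overline{g}'_{\delta}$ explicitly using a cut-off that interpolates between $g'$ and the product metric $h' + \xi^{n-k}$ on the normal tubular neighbourhood $U_B(R)$ of $B$, then argue by contradiction that for $\delta$ small enough they lie in $\Rinv(M')$. Concretely, I would pick a cut-off function $\eta_\delta : M' \to [0,1]$ with $\eta_\delta = 1$ on $U_B(\delta)$, $\eta_\delta = 0$ outside $U_B(2\delta)$ and outside the cylindrical end restriction (so that $\overline{g}'_{\delta}$ is genuinely unchanged on $\partial M' \times [0,\infty)$), with $|\grad^{g'}\eta_\delta| \leq 2/\delta$, and set
\[
\overline{g}'_{\delta}
\definedas
\eta_\delta (f')^*(h' + \xi^{n-k}) + (1-\eta_\delta) g'
\]
on $U_B(2\delta)$ and $\overline{g}'_{\delta} \definedas g'$ elsewhere. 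One checks, exactly as in Lemma~\ref{ADHlemma3.1} applied to the embedding $i' : B^{k+1} \to M'$ (note $B$ is compact as a submanifold of $M'$, not of $M'_\infty$, and near the cylindrical end it is already product so there is nothing to correct there), that $G' \definedas g' - (f')^*(h' + \xi^{n-k})$ satisfies $|G'| \leq Cr$ and $|\nabla G'| \leq C$ on $U_B(R)$; combined with \eqref{boundAB} this gives bounds $|A^{g'}_{\overline{g}'_{\delta}}| \leq C\eta_\delta r$ and $|B^{g'}_{\overline{g}'_{\delta}}| \leq C\eta_\delta + C|\grad^{g'}\eta_\delta|\,r$ analogous to \eqref{ABleqr}.

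**The invertibility argument.** Suppose not: there is a sequence $\delta_i \to 0$ with $D^{\overline{g}'_{\delta_i}}$ non-invertible. Since $\overline{g}'_{\delta_i}$ coincides with $g'$ on the cylindrical end, the induced boundary metric $\partial \overline{g}'_{\delta_i} = \partial g'$ is independent of $i$ and has a spectral gap $(-\Lambda,\Lambda)$ around zero; hence by Proposition~\ref{propbdrygap}, Part (1), the essential spectrum of $D^{\overline{g}'_{\delta_i}}$ has the same gap and the non-invertibility must come from a zero eigenvalue. So pick normalized harmonic spinors $\phi_i$ on $(M'_\infty, \overline{g}'_{\delta_i})$ with $\|\phi_i\|_{L^2} = 1$. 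On any compact $K \subset M'_\infty \setminus B$ the metrics agree with $g'$ for $i$ large, so Lemma~\ref{C^2-estimate} gives uniform $C^2$ bounds and Ascoli (Lemma~\ref{lemma_Ascoli}) together with a diagonal argument produces a weak-$L^2$ limit $\phi$ with $D^{g'}\phi = 0$ weakly on $M'_\infty \setminus B$ and $\|\phi\|_{L^2} \leq 1$. Here the submanifold $B$ has dimension $k+1 \leq n-1$ (from the hypothesis $n-k \geq 2$), so $B$ has codimension $\geq 2$ in the $(n+1)$-manifold $M'$, and one applies Lemma~\ref{removal_singularities} — with the roles played there: $B$ in the sense of the lemma is this $B$, which contains $S \times [-3\rho/2,\infty)$ and off the cylindrical end is compact with boundary — to conclude $D^{g'}\phi = 0$ weakly on all of $M'_\infty$.

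**Ruling out $\phi \equiv 0$ and the main obstacle.** Finally one shows $\phi \not\equiv 0$: if $\phi = 0$ then by Rellich–Kondrachov $\phi_i \to 0$ in $L^2$ on every compact set (using $|\overline{g}'_{\delta_i} - g'| \to 0$ on compacts), and then the exponential-decay estimate of Proposition~\ref{propbdrygap}, Part (3) — applied to $l_i(t)^2 = \int_{\partial M' \times \{t\}}|\phi_i|^2$, valid because $\overline{g}'_{\delta_i} = g'$ is product on the whole end — forces
\[
1 = \int_{M'_\infty} |\phi_i|^2
\leq
\Bigl(1 + \tfrac{1}{4\Lambda^2}\Bigr)
\int_{M' \cup (\partial M' \times [0,1])} |\phi_i|^2 \to 0,
\]
a contradiction. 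Hence $\phi$ is a nontrivial $L^2$ harmonic spinor for $g'$, contradicting $g' \in \Rinv(M')$. The step I expect to be the genuine obstacle is verifying that Lemma~\ref{removal_singularities} genuinely applies to this $B$: one must make sure the submanifold $B$ — which is $S \times [-3\rho/2,\infty)$ on $\{A\},\{B\}$ and $B^{k+1}\times\{p\}$ inside the attached handle $\{C'\}$ — is smooth and connected, has the correct codimension everywhere (including where it threads through the neck of the first surgery), and that $B \setminus (S \times (-t_1,\infty))$ is compact with boundary, exactly as the lemma's hypotheses demand; this is where the careful choice of $p \in S^{n-k}$ in the definition of $B$ does its work, and where the dimension bound $n - k \geq 2$ is essential.
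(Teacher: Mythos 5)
Your proposal is correct and follows essentially the same approach as the paper's proof: the same cut-off definition of $\overline{g}'_{\delta}$, the same observation that $\partial\overline{g}'_{\delta}=\partial g'$ is unchanged so the essential-spectrum gap is automatic (unlike Step~1, no analogue of Lemma~\ref{lemmabdrygap} is needed), the same $C^2$-estimate and Ascoli extraction of a weak limit, the same application of the removal-of-singularities Lemma~\ref{removal_singularities} to $B$ exactly as you describe, and the same exponential-decay argument to rule out $\phi\equiv0$. Two minor notational points: the pullback should be $({f'}^{-1})^*$ rather than $(f')^*$, and the $A,B$-bounds you derive from Lemma~\ref{ADHlemma3.1} are not actually invoked in this step since the boundary metric stays fixed.
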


We now define the metrics $\overline{g}'_{\delta}$ and then prove that
they have the required properties. Let $\chi$ be the cut-off function
introduced in Subsection~\ref{step1} and set 
$\eta(x) \definedas \chi(r(x) /\delta )$ where $\delta > 0$ is a
small parameter. We define
\[
\overline{g}'_{\delta} 
\definedas
\eta ({f'}^{-1})^* ( h' + \xi^{n-k} ) + (1 - \eta) g'
\]

To the right in Figure~\ref{figure_step3a} we have 
$(M'_{\infty}, \overline{g}'_{\delta})$ with the product region shaded.

\begin{figure}[h]
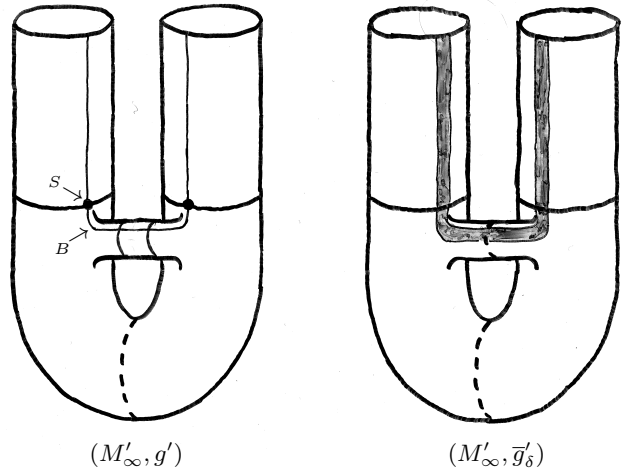

\begin{lpic}[]{FIGS/STEP3_A(0.33)}
\lbl[]{75,10;{\small $(M'_{\infty}, g')$}}
\lbl[]{220,10;{\small $(M'_{\infty}, \overline{g}'_{\delta})$}}
\lbl[]{45,93;{\tiny $B$}}
\lbl[]{52,98,30;{\small $\rightarrow$}}
\lbl[]{42,120;{\tiny $S$}}
\lbl[]{49,115,-30;{\small $\rightarrow$}}
\end{lpic}
\caption{Second approximation with product metric.}
\label{figure_step3a}
\end{figure}

\begin{proof} 
We need to prove $\overline{g}'_{\delta} \in \Rinv(M')$ for small
$\delta$, the other properties are clear. Again we argue by
contradiction and assume that there is a sequence $\delta_i \to 0$
such that $\overline{g}'_{\delta_i} \notin \Rinv(M')$ 

Since $\partial \overline{g}'_{\delta} = \partial g' = \partial g$ is
independent of $\delta$ we have uniform gaps around zero in the
essential spectrum of $D^{\overline{g}'_{\delta}}$, therefore the
assumption that $\overline{g}'_{\delta_i} \notin \Rinv(M')$ implies
the existence of harmonic spinors $\phi'_i$ on 
$(M'_{\infty}, \overline{g}'_{\delta_i})$ 
with $D^{\overline{g}'_{\delta_i}} \phi'_i = 0$ and 
$\int_{M'_\infty} |\phi'_i|^2 \, dv^{\overline{g}'_{\delta_i}} = 1$. 

The proof goes on exactly as the proof of Proposition
\ref{first_approx_prop}. We note that $\overline{g}'_{\delta_i} = g'$ 
on $M'_\infty\setminus U_B(2\delta)$. We fix $\gamma$ small
enough. Then for all $i$ with $2\delta_i < \gamma$ and all compact
subsets $K \subset M'_\infty \setminus U_B(\gamma) \subset 
M'_\infty \setminus U_B(2\delta_i)$ we get with
Lemma~\ref{C^2-estimate} that 
\[
\Vert\phi'_i\Vert_{C^2(K)} 
\leq 
C\Vert \phi'_i 
\Vert_{L^2(M'_\infty \setminus U_{B}(\gamma),g')}
\leq C 
\] 
where $C$ is a 
constant only depending on $(K, M'_\infty \setminus U_{B}(\gamma),g')$.
From the Theorem of Ascoli, Lemma~\ref{lemma_Ascoli}, we
obtain that $\phi'_i\to \phi'$ strongly in $C^1(K)$ and $D^{g'}\phi'=0$
weakly on each $K$. Moreover, $\phi'_i \to \phi'$ weakly in
$L^2(M'_\infty \setminus U_{B}(\gamma), g')$ and 
$\Vert \phi' \Vert_{L^2(M'_\infty\setminus U_{B}(\gamma), g')} \leq 1$. 
Thus, if $\gamma \to 0$ we obtain that $D^{g'}\phi'=0$ weakly on 
$M'_\infty \setminus B$ and $\phi' \in L^2(M'_\infty,g')$. From Lemma
\ref{removal_singularities} we then have $D^{g'}\phi'=0$ weakly on
$M'_\infty$. And again it remains to show that $\phi'$ does not vanish
identically. This is done exactly as in Proposition
\ref{first_approx_prop} using part (3) of Proposition
\ref{propbdrygap}. 
\end{proof}

After this step we replace $g'$ by $\overline{g}'_{\delta'_0}$ for
some $\delta'_0$ sufficiently small and define $R'_{\rm max}
\definedas \delta'_0$.

%%%%%%%%%%%%%%%%%%%%%%%%%%%%%%%%%%%%%%%%%%%%%%%%%%%%%%%%%%%%%%%%%
\subsection{Step 4: Second surgery}
%%%%%%%%%%%%%%%%%%%%%%%%%%%%%%%%%%%%%%%%%%%%%%%%%%%%%%%%%%%%%%%%%

In this section, we perform surgery (or ``half-surgery'') on 
$B$ in $(M',g')$ to produce $(M'', g''_{\rho})$. Here $\rho > 0$ is 
again a parameter which will be adjusted later. The aim is to 
replace a neighbourhood of $B$ which is diffeomorphic to 
$B^{k+1} \times B^{n-k}$ (see $\{F\}$ below) by 
$B_-^{k+2} \times S^{n-k-1}$ (see $\{ F' \}$ below). 

On $U_B(R'_{\rm max})$ the metric $g'$ has the product form 
\[
g' = h' + \xi^{n-k} = h' + dr^2 + r^2 \sigma^{n-k-1},
\] 
and in the cylindrical end where $h' = h + dt^2$ we have
\[
g' = h + dt^2 + \xi^{n-k} = h + dr^2 + r^2 \sigma^{n-k-1} + dt^2.
\] 

We divide $M'$ into three pieces, see Figure~\ref{figure_step4a},
\begin{itemize}
\item[$\{D\}$] 
$M' \setminus U_{B}(R'_{\rm max}/2)$,
\item[$\{E\}$] 
$U_{B}(R'_{\rm max}/2) \setminus U_{B}(\rho /2)
\simeq
B^{k+1} \times (\rho/2, R'_{\rm max}/2) \times S^{n-k-1} $,
\item[$\{F\}$]
$U_{B}(\rho /2)
\simeq
B^{k+1} \times B^{n-k} (\rho/2)$,
\end{itemize}
and we divide the cylindrical end of $M'_{\infty}$ in corresponding
pieces, 
\[
\partial M' \times [0,\infty) 
= \{\overline{D}\} + \{\overline{E}\} + \{\overline{F}\},
\]
where 
\[
\{\overline{D}\} = \{\partial D\} \times [0,\infty), \quad
\{\overline{E}\} = \{\partial E\} \times [0,\infty), \quad
\{\overline{F}\} = \{\partial F\} \times [0,\infty) 
\]
come from a decomposition of the boundary $\partial M' = \partial M$
into three pieces
\begin{itemize}
\item[$\{\partial D\}$] 
$\partial M' \setminus U_{S}(R'_{\rm max}/2)$,
\item[$\{\partial E\}$]
$U_{S}(R'_{\rm max}/2) \setminus U_{S}(\rho /2)
\simeq
S^{k} \times (\rho/2, R'_{\rm max}/2) \times S^{n-k-1} $,
\item[$\{\partial F\}$]
$U_{S}(\rho /2) \simeq S^k \times B^{n-k} (\rho/2) $.
\end{itemize}
Finally, we set 
\[
\{D_{\infty}\} = \{D\} + \{\overline{D}\}, \quad
\{E_{\infty}\} = \{E\} + \{\overline{E}\}, \quad
\{F_{\infty}\} = \{F\} + \{\overline{F}\}, 
\]
so that $M'_{\infty} = \{D_{\infty}\} + \{E_{\infty}\}
+ \{F_{\infty}\}$. 

\begin{figure}[h]
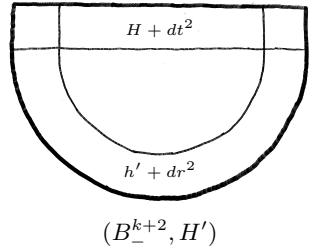

\begin{lpic}[]{FIGS/STEP4_B(0.30)}
\lbl[]{105,15;{\small $(B^{k+2}_-, H')$}}
\lbl[]{105,43;{\tiny $h' + dr^2$}}
\lbl[]{105,106;{\tiny $H + dt^2$}}
\end{lpic}
\caption{The metric $H'$.}
\label{figure_step4b}
\end{figure}

Let $B^{k+2}_-$ denote the lower half of the $(k+2)$-dimensional
disk. Let $H'$ be a metric on $B^{k+2}_-$ which is equal to $H + dt^2$
near the horizontal part of the boundary (For the definition of $H$
see Step 2.) and equal to $h' + dr^2$ near the hemisphere part of the 
boundary, see Figure~\ref{figure_step4b}. Near the corners these 
metrics coincide as $h + dr^2 + dt^2$.

The manifold $M''$ after surgery is obtained by replacing
$\{F\}$ by 
\begin{itemize}
\item[$\{F'\}$]
$B^{k+2}_- \times S^{n-k-1} $
\end{itemize}
and $\{\overline{F}\}$ by $\{\overline{F}'\} 
= \{\partial F'\} \times [0,\infty)$, where
\begin{itemize}
\item[$\{\partial F'\}$]
$B^{k+1} \times S^{n-k-1}$.
\end{itemize}
We define metrics $g''_{\rho}$ on $M''$ by 
\begin{itemize}
\item[$\{D\}$] 
$g''_{\rho} \definedas g'$
\item[$\{E\}$] 
$g''_{\rho} \definedas
h + dr^2 + \alpha_{\rho}(r)^2 \sigma^{n-k} $
\item[$\{F'\}$]
$g''_{\rho} \definedas
H' + (2\rho/3)^2 \sigma^{n-k-1} $
\end{itemize}
($\alpha_\rho$ is as defined in Figure~\ref{figure_alpharho} when 
replacing $R_{\text{max}}$ by $R_{\text{max}}'$)
and on the cylindrical end $g''_{\rho} 
\definedas \partial g''_{\rho} + dt^2$ where
\begin{itemize}
\item[$\{\partial D\}$] 
$\partial g''_{\rho} \definedas \partial g'$
\item[$\{\partial E\}$]
$\partial g''_{\rho} \definedas
h + dr^2 + \alpha_{\rho}(r)^2 \sigma^{n-k-1}$ 
\item[$\{\partial F'\}$]
$\partial g''_{\rho} \definedas
H + (2\rho/3)^2 \sigma^{n-k-1}$
\end{itemize}

In Figure~\ref{figure_step4a} we have $(M'_{\infty}, g')$ before
surgery to the left and $(M''_{\infty}, g''_{\rho})$ after surgery to
the right. Note that the boundary manifold 
$(\partial M'', \partial g''_{\rho})$ is the result of surgery on
$\partial M$ along the embedding $f : S^k \times B^{n-k} \to \partial M$.

\begin{figure}[h]
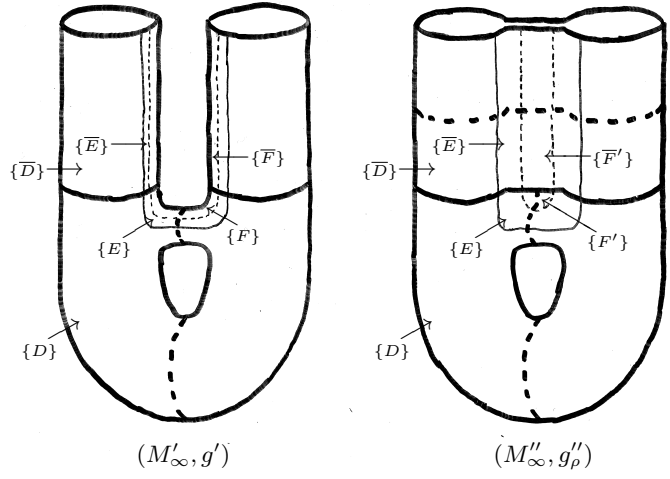

\begin{lpic}[]{FIGS/STEP4_A(0.33)}
\lbl[]{75,10;{\small $(M'_{\infty}, g')$}}
\lbl[]{220,10;{\small $(M''_{\infty}, g''_{\rho})$}}

\lbl[]{17,52;{\tiny $\{D\}$}}
\lbl[]{27,60,30;{\small $\longrightarrow$}}
\lbl[]{12,125;{\tiny $\{\overline{D}\}$}}
\lbl[]{27,125;{\small $\longrightarrow$}}
\lbl[]{47,93;{\tiny $\{E\}$}}
\lbl[]{57,101,30;{\small $\longrightarrow$}}
\lbl[]{38,135;{\tiny $\{\overline{E}\}$}}
\lbl[]{53,135;{\small $\longrightarrow$}}
\lbl[]{99,98;{\tiny $\{ F \}$}}
\lbl[]{91,107,150;{\small $\longrightarrow$}}
\lbl[]{109,130;{\tiny $\{\overline{F}\}$}}
\lbl[]{94,130;{\small $\longleftarrow$}}

\lbl[]{159,52;{\tiny $\{D\}$}}
\lbl[]{169,60,30;{\small $\longrightarrow$}}
\lbl[]{154,125;{\tiny $\{\overline{D}\}$}}
\lbl[]{169,125;{\small $\longrightarrow$}}
\lbl[]{189,93;{\tiny $\{E\}$}}
\lbl[]{199,101,30;{\small $\longrightarrow$}}
\lbl[]{182,135;{\tiny $\{\overline{E}\}$}}
\lbl[]{197,135;{\small $\longrightarrow$}}
\lbl[]{248,130;{\tiny $\{ \overline{F}' \}$}}
\lbl[]{230,134;{\small $\xleftarrow{\phantom{XX}}$}}
\lbl[]{246,96;{\tiny $\{ F' \}$}}
\lbl[]{229,104,150;{\small $\xrightarrow{\phantom{XXX}}$}}
\end{lpic}
\caption{Second surgery.}
\label{figure_step4a}
\end{figure}

For $R>\frac{\rho}{2}$ we define $U''(R) \subset M''_\infty$ by 
$M''_\infty \setminus U''(R) = M'_\infty \setminus U_B(R)$. Note that
on $M''_\infty \setminus U''(R)$ we have $g''_\rho = g'$. Further, we 
define the subset $\partial U''(R) \subset \partial M''$ by 
$\partial M'' \setminus \partial U''(R) = 
\partial M \setminus U_S(R)$ for $R > \frac{\rho}{2}$. Note that 
$\partial g_\rho'' = \partial g$ on
$\partial M'' \setminus \partial U''(R)$.

\begin{prop} \label{second_surgery_prop}
$g''_\rho \in \Rinv(M'')$ for all sufficiently small $\rho$.
\end{prop}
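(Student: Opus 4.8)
The plan is to repeat the contradiction scheme of the proof of Proposition~\ref{first_surgery_prop}, adapted to the two new features of Step~4: the boundary metric $\partial g''_\rho$ genuinely changes (it is the result of a $k$-surgery on $\partial M$), and the surgery modifies a \emph{noncompact} part of $M'_\infty$, namely the handle $\{F'\}$ together with its cylindrical continuation $\{\overline{F}'\}$. I would begin with a uniform spectral gap on the boundary. Since $g\in\Rinv(M)$, a harmonic $L^2$ spinor on the cylindrical end would place $0$ in the essential spectrum of $D^g$, so $D^{\partial g}$ is invertible on the closed manifold $\partial M$; and $\partial g''_\rho$ is obtained from $(\partial M,\partial g)$ by a surgery of dimension $k$ and codimension $n-k\ge 2$ of exactly Ammann--Dahl--Humbert type. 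Applying the proof of \cite[Theorem~1.2]{ammann_dahl_humbert_09} to $(\partial M,\partial g)$ --- equivalently, running the present scheme on the closed manifold $\partial M$, where there is no end and only the neck estimate is needed --- yields $\Lambda,\rho_0>0$ so that $D^{\partial g''_\rho}$ has a spectral gap $(-\Lambda,\Lambda)$ for all $\rho<\rho_0$. By Proposition~\ref{propbdrygap}(1) the essential spectrum of $D^{g''_\rho}$ on $M''_\infty$ then avoids $(-\Lambda,\Lambda)$ as well, uniformly in $\rho<\rho_0$, so a failure of invertibility can only come from a zero eigenvalue.

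Next I would assume, for contradiction, that there is a sequence $\rho_i\to 0$ with $D^{g''_{\rho_i}}\phi_i=0$ for $\phi_i\in L^2(M''_\infty,g''_{\rho_i})$, $\|\phi_i\|_{L^2}=1$. Away from the surgery one has $M''_\infty\setminus U''(\rho_i)=M'_\infty\setminus U_B(\rho_i)$ with $g''_{\rho_i}=g'$, so on each fixed region $M'_\infty\setminus U_B(\gamma)$ (with $\rho_i<\gamma$) Lemma~\ref{C^2-estimate} gives uniform $C^2$ bounds; Ascoli (Lemma~\ref{lemma_Ascoli}) and a diagonal exhaustion as $\gamma\to 0$ then produce a subsequence converging in $C^1_{\mathrm{loc}}(M'_\infty\setminus B)$ and weakly in $L^2$ to a spinor $\phi'$ with $D^{g'}\phi'=0$ weakly on $M'_\infty\setminus B$ and $\|\phi'\|_{L^2(M'_\infty,g')}\le 1$. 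Since $\dim B=k+1$, $B$ has codimension $n-k\ge 2$, and $B\supset S\times[-t_1,\infty)$ --- precisely the situation of Lemma~\ref{removal_singularities} --- removal of singularities gives $D^{g'}\phi'=0$ weakly on all of $M'_\infty$, whence $\phi'\equiv 0$ because $g'\in\Rinv(M')$.

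It then remains to contradict $\phi'\equiv 0$ by showing the unit mass of $\phi_i$ cannot all escape. The exponential decay estimate Proposition~\ref{propbdrygap}(3), applied with the uniform $\Lambda$ on the end $\partial M''\times[0,\infty)$, handles escape to infinity exactly as in the proof of Proposition~\ref{first_approx_prop}: for a fixed large $T$ one gets $\int_{M''\cup(\partial M''\times[0,T])}|\phi_i|^2\,dv^{g''_{\rho_i}}\ge c>0$ with $c$ independent of $i$, while the mass on $\partial M''\times[T,\infty)$ is made arbitrarily small by enlarging $T$. The remaining danger --- concentration, as $i\to\infty$, in the modified region $U''(\rho_i)$, which unlike the neck in Step~2 reaches up the end through $\{\overline{F}'\}$ --- is exactly what Lemma~\ref{fundamental_estimate_first} rules out: for fixed $s$ with $\rho_i<s<2s<R'_{\rm max}/2$,
\[
\int_{U''(2s)\setminus U''(s)}|\phi_i|^2\,dv^{g''_{\rho_i}}\;\ge\;\frac{1}{8}\int_{U''(s)}|\phi_i|^2\,dv^{g''_{\rho_i}}.
\]
Combining the three facts --- a positive lower bound for the mass in $M''\cup(\partial M''\times[0,T])$, negligible mass on $\partial M''\times[T,\infty)$, and that $(U''(2s)\setminus U''(s))\cap(M''\cup(\partial M''\times[0,T]))$ is a fixed compact subset of $M'_\infty\setminus B$ on which $g''_{\rho_i}=g'$ and $\phi_i\to 0$ uniformly --- forces a definite amount of mass onto a set where $\phi_i\to 0$, a contradiction. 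Hence the $\phi_i$ would converge to a nonzero harmonic $L^2$ spinor for $g'$, which does not exist, so $g''_\rho\in\Rinv(M'')$ for small $\rho$. The step I expect to be the main obstacle is this last one: marrying the \emph{a priori} neck estimate (Lemma~\ref{fundamental_estimate_first}) with the uniform exponential decay, since here the surgery alters a noncompact piece of $M'_\infty$ and the bookkeeping must carefully separate the compact core of the handle from its infinite cylindrical tail.
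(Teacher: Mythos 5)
Your proposal follows the paper's proof in all essentials: the uniform boundary spectral gap, the Ascoli/diagonal limit to a weakly $L^2$ spinor on $M'_\infty\setminus B$, removal of singularities to land in $\ker D^{g'}$, and the combination of the bulk neck estimate with the uniform exponential decay to rule out $\phi'\equiv 0$. Two points are worth flagging. First, a labelling slip: the bulk a~priori estimate with constant $\tfrac18$ on the tubular neighbourhoods $U''(s)\subset M''_\infty$ that you invoke is Lemma~\ref{fundamental_estimate_second}, not Lemma~\ref{fundamental_estimate_first}; the latter is the \emph{boundary} version on $(\partial M'',\partial g''_\rho)$ with constant $\tfrac{1}{128}$ and is what the paper uses to prove the uniform boundary gap (Lemma~\ref{bdry_surgery_prop}). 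Second, for the boundary gap you quote the \emph{proof} of the closed-manifold surgery theorem of Ammann--Dahl--Humbert rather than its statement, which is the right move (the statement alone does not give a $\rho$-uniform gap); the paper instead re-derives this as Lemma~\ref{bdry_surgery_prop} using Lemma~\ref{fundamental_estimate_first}, removal of singularities on the closed manifold $\partial M$, and the boundary neck estimate --- the content is the same, the paper just keeps the argument self-contained. Finally, in the last synthesis it is cleaner to argue as the paper does: the bulk estimate directly forces $\int_{M''_\infty\setminus U''(\delta)}|\phi_i|^2\ge\tfrac19$, and then exponential decay pins a definite fraction of this mass into the compact set $(M''\cup\partial M''\times[0,T])\setminus U''(\delta)\subset M'_\infty\setminus B$, contradicting $\phi_i\to 0$ there; your version, which bounds the mass on $(U''(2s)\setminus U''(s))\cap(M''\cup\partial M''\times[0,T])$, still needs a lower bound on $\int_{U''(s)}|\phi_i|^2$ that is not a~priori available, so you would have to split into cases or use the paper's decomposition.
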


Before proving the Proposition we need to show that the boundary
metrics have a uniform spectral gap. For this we need the following
Lemma, similar to \cite[Proposition~3.5]{ammann_dahl_humbert_09} and
Lemma~\ref{fundamental_estimate_first_n}.

\begin{lemma} \label{fundamental_estimate_first}
Choose $\rho$ and $s$ so that $\rho < s < 2s < R'_{\rm max}/2$ and assume
that $\psi_\rho$ are spinors on $(\partial M'', \partial g''_\rho)$
satisfying 
\[
D^{\partial g''_\rho} \psi_{\rho}
= \lambda_{\rho} \psi_{\rho}
\]
where $32 \lambda_{\rho}^2 s^2 < 1/2$. Then 
\[
\frac{1}{128} \int_{\partial U''(s)}
|\psi_{\rho}|^2 \, dv^{\partial g''_\rho}
\leq
\int_{\partial U''(2s) \setminus \partial U''(s)}
|\psi_{\rho} |^2 \, dv^{\partial g''_\rho}.
\]
\end{lemma}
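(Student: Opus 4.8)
The plan is to mimic the conformal-blow-up argument of Lemma~\ref{fundamental_estimate_first_n}, now carried out on the closed boundary manifold $(\partial M'', \partial g''_\rho)$ rather than on $M'$. The geometric input is that on the neighbourhood $\partial U''(2s)$ the metric $\partial g''_\rho$ has, by construction, the product-type shape $h + dr^2 + \alpha_\rho(r)^2 \sigma^{n-k-1}$ on $\{\partial E\}$ and $H + (2\rho/3)^2\sigma^{n-k-1}$ on $\{\partial F'\}$, so that the conformal metric $\widetilde{g}_\rho \definedas \alpha_\rho^{-2}\,\partial g''_\rho$ is a Riemannian product with an $S^{n-k-1}$-factor of radius one. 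First I would record this and set $\widetilde{\psi}_\rho \definedas \alpha_\rho^{(n-1)/2}\,\beta^{\partial g''_\rho}_{\widetilde{g}_\rho}\psi_\rho$, noting that $\dim \partial M'' = n$, so that by \eqref{Dconfchange} we get $D^{\widetilde{g}_\rho}\widetilde{\psi}_\rho = \alpha_\rho \lambda_\rho\, \alpha_\rho^{-(n+1)/2+(n-1)/2}\widetilde\psi_\rho$; more precisely $\beta$-conjugating $D^{\partial g''_\rho}\psi_\rho=\lambda_\rho\psi_\rho$ and applying the conformal covariance yields $D^{\widetilde{g}_\rho}\widetilde\psi_\rho = \lambda_\rho \alpha_\rho \widetilde\psi_\rho$ up to the identification. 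The upshot is that $\widetilde\psi_\rho$ is \emph{almost} harmonic for $\widetilde g_\rho$, with an error controlled by $\lambda_\rho \alpha_\rho \leq 2s\lambda_\rho$, and the hypothesis $32\lambda_\rho^2 s^2 < 1/2$ is exactly what makes this error small enough to be absorbed.

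Next I would pick a cut-off $\eta$ on $\partial M''$ with $\eta = 1$ on $\partial U''(s)$, $\eta = 0$ outside $\partial U''(2s)$, and $|d\eta|_{\partial g''_\rho}\le 2/s$; since $d\eta$ is supported where $g''_\rho=g'$ this gives $|d\eta|^2_{\widetilde g_\rho}\le 4\alpha_\rho^2/s^2$ as in Lemma~\ref{fundamental_estimate_first_n}. Then $D^{\widetilde g_\rho}(\eta\widetilde\psi_\rho) = \grad^{\widetilde g_\rho}\eta\cdot\widetilde\psi_\rho + \eta\lambda_\rho\alpha_\rho\widetilde\psi_\rho$. Using the spectral bound on the product factor, $\int_{\partial U''(2s)}|D^{\widetilde g_\rho}(\eta\widetilde\psi_\rho)|^2\,dv^{\widetilde g_\rho}\ge \tfrac{(n-k-1)^2}{4}\int_{\partial U''(2s)}|\eta\widetilde\psi_\rho|^2\,dv^{\widetilde g_\rho}$ — here I need $n-k-1\ge 1$, i.e. $n-k\ge 2$, which is the standing hypothesis — so the left side is $\ge \tfrac14\int|\eta\widetilde\psi_\rho|^2$. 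For the left side, expanding the square and using $|a+b|^2 \le 2|a|^2+2|b|^2$ together with $|\grad\eta|^2_{\widetilde g_\rho}\le 4\alpha_\rho^2/s^2$ and $|\lambda_\rho\alpha_\rho|^2 \le 4\lambda_\rho^2 s^2 \le \tfrac{1}{16}\cdot\tfrac{1}{s^2}\cdot s^2$ hmm — more carefully, $\lambda_\rho^2\alpha_\rho^2 \le 4\lambda_\rho^2 s^2 < 1/16$, while $4\alpha_\rho^2/s^2 \le 16$ on the annulus, so one obtains a bound of the form $\int_{\partial U''(2s)}|D^{\widetilde g_\rho}(\eta\widetilde\psi_\rho)|^2 \le \tfrac{C}{s^2}\int_{\partial U''(2s)\setminus\partial U''(s)}\alpha_\rho^2|\widetilde\psi_\rho|^2 + (\text{small})\int_{\partial U''(2s)}\alpha_\rho^2|\widetilde\psi_\rho|^2$, where the second term — thanks to $32\lambda_\rho^2 s^2<1/2$ — is at most $\tfrac18\int|\eta\widetilde\psi_\rho|^2$ and gets absorbed into the $\tfrac14\int|\eta\widetilde\psi_\rho|^2$ lower bound.

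Finally I would convert back to $g''_\rho$ and $\psi_\rho$ using $\alpha_\rho^2|\widetilde\psi_\rho|^2\,dv^{\widetilde g_\rho} = \alpha_\rho |\psi_\rho|^2\,dv^{\partial g''_\rho}$ and $|\widetilde\psi_\rho|^2\,dv^{\widetilde g_\rho} = \alpha_\rho^{-1}|\psi_\rho|^2\,dv^{\partial g''_\rho}$, together with the elementary bounds $\alpha_\rho \le 2s$ on the annulus $\partial U''(2s)\setminus\partial U''(s)$ and $\alpha_\rho \le s$ on $\partial U''(s)$ (valid because $\rho<s$ means $\alpha_\rho(r)=r$ there, and on $\{\partial F'\}$ we have $\alpha_\rho = 2\rho/3 < s$). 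Chaining these inequalities — in the same bookkeeping style as in Lemma~\ref{fundamental_estimate_first_n}, but now tracking the extra factor of $2$ coming from the $|a+b|^2$-splitting and the absorbed $\tfrac18$-term — produces the stated constant $\tfrac{1}{128}$. I expect the only genuinely delicate point to be keeping the numerical constants honest through the absorption step: one has to verify that, with the precise hypothesis $32\lambda_\rho^2 s^2 < 1/2$, the almost-harmonic error term is dominated by (say) half of the spectral-gap lower bound, leaving at least $\tfrac18\int|\eta\widetilde\psi_\rho|^2$ to work with; everything else is a routine repetition of the Step~2 computation with $n-k$ replaced by $n-k-1$ and the ambient dimension shifted down by one.
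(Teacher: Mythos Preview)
Your proposal is correct and follows essentially the same route as the paper: the same conformal change $\widetilde g_\rho=\alpha_\rho^{-2}\partial g''_\rho$, the same cut-off $\eta$, the product spectral bound $\tfrac{(n-k-1)^2}{4}\ge\tfrac14$, the $|a+b|^2\le 2|a|^2+2|b|^2$ splitting, and the absorption $8\lambda_\rho^2 s^2<\tfrac18$ leaving $\tfrac14-\tfrac18=\tfrac18$, which after converting back via $\alpha_\rho\le 2s$ on the annulus and $\alpha_\rho\le s$ on $\partial U''(s)$ gives exactly the factor $\tfrac{1}{128}$. The only cosmetic difference is that the paper keeps the error term explicit as $\tfrac{1-32\lambda_\rho^2 s^2}{64}$ until the very end before invoking the hypothesis, whereas you absorb it one step earlier.
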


\begin{proof}
We make the conformal change $\partial \widetilde{g}_{\rho} 
= \alpha_{\rho}^{-2} \partial g''_\rho$ (the function $\alpha_\rho$
was defined in Step $2$) and set 
\[
\widetilde{\psi}_{\rho} = \alpha_{\rho}^{\frac{n-1}{2}} 
\beta^{\partial g''_\rho}_{\partial \widetilde{g}_{\rho}}
\psi_{\rho}.
\]
We then have
\[
D^{\partial \widetilde{g}_{\rho}} \widetilde{\psi}_{\rho}
= \lambda_{\rho} \alpha_{\rho} \widetilde{\psi}_{\rho}.
\]
Choose $s$ so that 
\[
\rho < s < 2s < R'_{\rm max}/2
\]
and choose a cut-off function $\eta$ on $\partial M''$ with 
$\eta = 1$ on $\partial U''(s)$ and $\eta = 0$ on 
$\partial M'' \setminus \partial U''(2s)$. 
Since $d\eta$ is supported in 
$\partial U''(2s)\setminus \partial U''(s)\subset \{\partial E\}$ 
we may assume that 
\[ 
|d\eta|_{\partial g''_\rho} \leq 2/s
\]
which implies 
\[ 
|d\eta|^2_{\partial \widetilde{g}_{\rho}} 
= 
\alpha_{\rho}^2 |d\eta|^2_{\partial g''_\rho}
\leq 
4 \alpha_{\rho}^2 /s^2.
\]
We have
\[
D^{\partial \widetilde{g}_{\rho}} (\eta \widetilde{\psi}_{\rho})
= 
\grad^{\partial \widetilde{g}_{\rho}} \eta \cdot \widetilde{\psi}_{\rho} 
+ 
\eta \lambda_{\rho} \alpha_{\rho} \widetilde{\psi}_{\rho},
\]
so
\begin{equation} \label{Dgrho}
\begin{split}
|D^{\partial \widetilde{g}_{\rho}} (\eta \widetilde{\psi}_{\rho})|^2
&\leq 
2 |\grad^{\partial \widetilde{g}_{\rho}} \eta \cdot
\widetilde{\psi}_{\rho}|^2 
+
2 \lambda_{\rho}^2 \alpha_{\rho}^2 |\eta \widetilde{\psi}_{\rho}|^2 \\
&\leq 
\frac{8\alpha_{\rho}^2}{s^2} |\widetilde{\psi}_{\rho} |^2
+ 
2 \lambda_{\rho}^2 \alpha_{\rho}^2 |\eta \widetilde{\psi}_{\rho}|^2
\end{split}
\end{equation}
where the first term is supported in 
$\partial U''(2s) \setminus \partial U''(s)$.
Since $\partial \widetilde{g}_{\rho} = 
\sigma^{n-k-1} + \alpha_{\rho}^{-2} H$ on $\partial U''(2s)$ we have
a lower spectral bound, see \cite[Lemma~2.5]{ammann_dahl_humbert_09},
\begin{equation} \label{specbound}
\begin{split}
\int_{\partial U''(2s)} 
|D^{\partial \widetilde{g}_{\rho}} (\eta \widetilde{\psi}_{\rho})|^2
\, dv^{\partial \widetilde{g}_{\rho}}
&\geq 
\frac{(n-k-1)^2}{4}
\int_{\partial U''(2s)} 
|\eta \widetilde{\psi}_{\rho}|^2 
\, dv^{\partial \widetilde{g}_{\rho}} \\ 
&\geq 
\frac{1}{4}
\int_{\partial U''(2s)} 
|\eta \widetilde{\psi}_{\rho}|^2 
\, dv^{\partial \widetilde{g}_{\rho}} .
\end{split}
\end{equation}
Using \eqref{Dgrho} we get for the left-hand side, 
\[ \begin{split}
&\int_{\partial U''(2s)} 
|D^{\partial \widetilde{g}_{\rho}} (\eta \widetilde{\psi}_{\rho})|^2
\, dv^{\partial \widetilde{g}_{\rho}} \\
&\qquad \leq
\frac{8}{s^2} 
\int_{\partial U''(2s) \setminus \partial U''(s)}
\alpha_{\rho}^2 |\widetilde{\psi}_{\rho} |^2 
\, dv^{\partial \widetilde{g}_{\rho}}
+
2 \lambda_{\rho}^2 \int_{\partial U''(2s)}
\alpha_{\rho}^2 |\eta \widetilde{\psi}_{\rho}|^2 
\, dv^{\partial \widetilde{g}_{\rho}} \\
&\qquad =
\frac{8}{s^2} \int_{\partial U''(2s) \setminus \partial U''(s)}
 \alpha_{\rho} |\psi_{\rho} |^2 
\, dv^{\partial g''_\rho}
+
2 \lambda_{\rho}^2 \int_{\partial U''(2s)}
\alpha_{\rho}^2 |\eta \widetilde{\psi}_{\rho}|^2 
\, dv^{\partial \widetilde{g}_{\rho}} \\
&\qquad \leq
\frac{16}{s} 
\int_{\partial U''(2s) \setminus \partial U''(s)}
|\psi_{\rho} |^2 \, dv^{\partial g''_\rho}
+
8 \lambda_{\rho}^2 s^2 \int_{\partial U''(2s)}
|\eta \widetilde{\psi}_{\rho}|^2 
\, dv^{\partial \widetilde{g}_{\rho}}
\end{split} \]
where we used that $\alpha_{\rho} \leq 2s$ in the final step. Inserted
in \eqref{specbound} we get
\begin{equation} \label{bound2}
\frac{16}{s} 
\int_{\partial U''(2s) \setminus \partial U''(s)}
|\psi_{\rho} |^2 \, dv^{\partial g''_\rho}
\geq
\left(\frac{1}{4} - 8 \lambda_{\rho}^2 s^2\right)
\int_{\partial U''(2s)} |\eta \widetilde{\psi}_{\rho}|^2 
\, dv^{\partial \widetilde{g}_{\rho}}.
\end{equation}
Here we have for the right-hand side,
\[ \begin{split}
\int_{\partial U''(2s)} 
|\eta \widetilde{\psi}_{\rho}|^2 
\, dv^{\partial \widetilde{g}_{\rho}}
&\geq
\int_{\partial U''(s)} 
|\widetilde{\psi}_{\rho}|^2 
\, dv^{\partial \widetilde{g}_{\rho}} \\
&=
\int_{\partial U''(s)}
\alpha_{\rho}^{-1}
|\psi_{\rho}|^2 
\, dv^{\partial g''_\rho} \\
&\geq
\frac{1}{s} 
\int_{\partial U''(s)}
|\psi_{\rho}|^2 
\, dv^{\partial g''_\rho}
\end{split} \]
where we in the final step used that $\alpha_{\rho} \leq s$. Inserted
in \eqref{bound2} we get 
\[
\frac{16}{s} 
\int_{\partial U''(2s) \setminus \partial U''(s)}
|\psi_{\rho} |^2 \, dv^{\partial g''_\rho}
\geq
\left(\frac{1}{4} - 8 \lambda_{\rho}^2 s^2\right)
\frac{1}{s} 
\int_{\partial U''(s)}
|\psi_{\rho}|^2 
\, dv^{\partial g''_\rho} ,
\]
or
\[ 
\int_{\partial U''(2s) \setminus \partial U''(s)}
|\psi_{\rho} |^2 \, dv^{\partial g''_\rho}
\geq
\frac{1 - 32 \lambda_{\rho}^2 s^2}{64}
\int_{\partial U''(s)}
|\psi_{\rho}|^2 
\, dv^{\partial g''_\rho} ,
\]
from which the lemma follows. 
\end{proof}

We also need one more version of this estimate.

\begin{lemma} \label{fundamental_estimate_second}
Let $s$ be such that $\rho < s < 2s < R_{\rm max} /2$ and assume that 
$\psi_\rho$ are harmonic $L^2$-spinors on $(M''_\infty, g''_\rho)$,
that is $D^{g''_{\rho}} \psi_{\rho} = 0$ and 
$\int_{M'_\infty} |\psi_\rho|^2 dv^{g''_{\rho}} < \infty$. Then 
\[
\frac{1}{8} 
\int_{U''(s)} |\psi_{\rho}|^2 \, dv^{g''_{\rho}}
\leq
\int_{U''(2s) \setminus U''(s)} 
| \psi_{\rho} |^2 \, dv^{g''_{\rho}}.
\]
\end{lemma}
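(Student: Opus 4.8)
The proof will follow the pattern of Lemma~\ref{fundamental_estimate_first_n}, and of the boundary estimate Lemma~\ref{fundamental_estimate_first}, the only differences being that the neck attached by the second surgery carries the round sphere $S^{n-k-1}$ rather than $S^{n-k}$, that we work on the $(n+1)$-dimensional manifold $M''$, and that $\psi_\rho$ is genuinely harmonic, so the role played by the eigenvalue $\lambda_\rho$ in Lemma~\ref{fundamental_estimate_first} is taken here by $0$.

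First I would pass to the conformally rescaled metric $\widetilde g_\rho \definedas \alpha_\rho^{-2} g''_\rho$, with $\alpha_\rho$ the function from Step~2 extended by $\alpha_\rho = 2\rho/3$ on $\{F'\}$. For $\rho < s$ the set $U''(2s)$ lies in $\{E\} + \{F'\}$, and there, by the very definition of $g''_\rho$ on these two pieces together with $\alpha_\rho(r) = r$ for $r\ge\rho$ on $\{E\}$ and $\alpha_\rho = 2\rho/3$ on $\{F'\}$, the metric $\widetilde g_\rho$ is a Riemannian product of a metric on the $(k+2)$-dimensional base with the unit round metric $\sigma^{n-k-1}$ on $S^{n-k-1}$. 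Setting $\widetilde\psi_\rho \definedas \alpha_\rho^{n/2}\,\beta^{g''_\rho}_{\widetilde g_\rho}\psi_\rho$ and applying the conformal covariance \eqref{Dconfchange} on the $(n+1)$-dimensional manifold $M''$, harmonicity of $\psi_\rho$ yields $D^{\widetilde g_\rho}\widetilde\psi_\rho = 0$.

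Next I would choose a cut-off function $\eta$ on $M''$ with $\eta = 1$ on $U''(s)$ and $\eta = 0$ outside $U''(2s)$. Its differential is supported in $U''(2s)\setminus U''(s)$, a region contained in $\{E\}$ on which $g''_\rho = g'$, so we may assume $|d\eta|_{g''_\rho}\le 2/s$ and hence $|d\eta|^2_{\widetilde g_\rho} = \alpha_\rho^2|d\eta|^2_{g''_\rho}\le 4\alpha_\rho^2/s^2$. Then $D^{\widetilde g_\rho}(\eta\widetilde\psi_\rho) = \grad^{\widetilde g_\rho}\eta\cdot\widetilde\psi_\rho$ is supported in this annulus and satisfies $|D^{\widetilde g_\rho}(\eta\widetilde\psi_\rho)|^2 \le (4\alpha_\rho^2/s^2)|\widetilde\psi_\rho|^2$. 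On the other hand, since $\widetilde g_\rho$ is a product with a round $S^{n-k-1}$ factor on $U''(2s)$, the spectral lower bound of \cite[Lemma~2.5]{ammann_dahl_humbert_09} gives $\int_{U''(2s)}|D^{\widetilde g_\rho}(\eta\widetilde\psi_\rho)|^2\,dv^{\widetilde g_\rho}\ge \frac{(n-k-1)^2}{4}\int_{U''(2s)}|\eta\widetilde\psi_\rho|^2\,dv^{\widetilde g_\rho}$. Feeding the pointwise estimate into the left-hand side, converting volume elements and spinor norms back via $\alpha_\rho^2|\widetilde\psi_\rho|^2\,dv^{\widetilde g_\rho} = \alpha_\rho|\psi_\rho|^2\,dv^{g''_\rho}$ and $|\widetilde\psi_\rho|^2\,dv^{\widetilde g_\rho} = \alpha_\rho^{-1}|\psi_\rho|^2\,dv^{g''_\rho}$, and using $\alpha_\rho\le 2s$ on $U''(2s)\setminus U''(s)$ and $\alpha_\rho\le s$ on $U''(s)$ exactly as in the proof of Lemma~\ref{fundamental_estimate_first_n}, gives the asserted inequality.

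The computation is routine; the only point that needs attention is to verify, from the construction in Step~4 (the compatibility of $H'$ with $H$ and with $h' + dr^2$, and the shape of $\alpha_\rho$), that the rescaled metric $\widetilde g_\rho$ really is a product with a \emph{unit} round $S^{n-k-1}$ factor on all of $U''(2s)$, so that the constant $(n-k-1)^2/4$ in the spectral bound is uniform in $\rho$. Everything else is unchanged from Lemma~\ref{fundamental_estimate_first_n}, since the cylindrical ends are untouched by the surgery and $\psi_\rho$ is harmonic rather than an eigenspinor.
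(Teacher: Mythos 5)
Your proof is essentially identical to the paper's intended argument. The paper's own proof of this lemma is literally the single sentence ``The proof is similar to the ones for Lemmas~\ref{fundamental_estimate_first_n} and~\ref{fundamental_estimate_first}.'' -- so you are supplying more detail than the source. Your steps -- the conformal rescaling $\widetilde g_\rho = \alpha_\rho^{-2} g''_\rho$, the spinor transformation $\widetilde\psi_\rho = \alpha_\rho^{n/2}\beta^{g''_\rho}_{\widetilde g_\rho}\psi_\rho$ with $D^{\widetilde g_\rho}\widetilde\psi_\rho = 0$, the cut-off supported in $\{E\}$, the product structure with a \emph{unit} round $S^{n-k-1}$ factor on $U''(2s)$ (which requires correcting a typo in the paper's definition of $g''_\rho$ on $\{E\}$, where $h + dr^2 + \alpha_\rho^2\sigma^{n-k}$ should read $h' + dr^2 + \alpha_\rho^2\sigma^{n-k-1}$ to match the decomposition $\{E\}\simeq B^{k+1}\times(\rho/2,R'_{\rm max}/2)\times S^{n-k-1}$ and to agree with $g'$ where $\alpha_\rho(r)=r$), and the $\alpha_\rho$-bounds converting back to $g''_\rho$ -- are exactly the right ones.

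One thing worth flagging: you write that the final step ``gives the asserted inequality,'' but if you carry out the arithmetic exactly as in Lemma~\ref{fundamental_estimate_first_n}, the sphere factor is now $S^{n-k-1}$, so \cite[Lemma~2.5]{ammann_dahl_humbert_09} gives the lower bound $\frac{(n-k-1)^2}{4}\ge\frac{1}{4}$ (not $\ge 1$, since only $n-k\ge 2$ is assumed). Combining $\frac{8}{s}\int_{U''(2s)\setminus U''(s)}|\psi_\rho|^2\,dv^{g''_\rho} \ge \frac{1}{4s}\int_{U''(s)}|\psi_\rho|^2\,dv^{g''_\rho}$ yields the constant $\frac{1}{32}$, not $\frac{1}{8}$. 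This appears to be a (harmless) typographical error in the lemma statement itself; the constant plays no quantitative role in the proof of Proposition~\ref{second_surgery_prop}, which only needs some fixed positive constant. So your argument is sound, but you should either record the constant $\frac{1}{32}$ or note that any positive constant suffices -- do not assert that the computation reproduces $\frac{1}{8}$, because it does not.

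Also note two further typos in the statement that you implicitly corrected: $R_{\rm max}$ should be $R'_{\rm max}$ (the surgery radius from Step~4), and the $L^2$-condition should be over $M''_\infty$, not $M'_\infty$.
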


\begin{proof}
The proof is similar to the ones for Lemmas
\ref{fundamental_estimate_first_n} and~\ref{fundamental_estimate_first}.
%
%We make the conformal change 
%$\widetilde{g}_{\rho} \definedas \alpha_{\rho}^{-2} g''_{\rho}$
%and set 
%\[
%\widetilde{\psi}_{\rho} 
%\definedas
%\alpha_{\rho}^{\frac{n}{2}} \beta^{g''_{\rho}}_{\widetilde{g}_{\rho}}
%\psi_{\rho} ,
%\]
%from \eqref{Dconfchange} we then have
%\[
%D^{\widetilde{g}_{\rho}} \widetilde{\psi}_{\rho} = 0.
%\]
%Choose a cut-off function $\eta$ on $M''_\infty$ with $\eta = 1$ on
%$U''(s)$ and $\eta = 0$ on $M''_\infty \setminus U''(2s)$. 
%
%Then the proof goes on exactly as the one of Lemma
%\ref{fundamental_estimate_first} by estimating 
%\[
%\int_{U''(2s)} | D^{\overline{g}_\rho}(\eta \widetilde{\psi}_{\rho}) |^2 
%\, dv^{\widetilde{g}_{\rho}}
%= 
%\int_{U''(2s)}
%|d\eta|_{\overline{g}_\rho}^2|\widetilde{\psi}_\rho|^2 
%\, dv^{\widetilde{g}_{\rho}} 
%\] 
%using 
%\cite[Lemma~2.5]{ammann_dahl_humbert_09}. The different constants
%appearing in the two inequalities come up due to the fact that here we
%dealt with harmonic spinors instead of lower eigenvalues as in Lemma
%\ref{fundamental_estimate_first}. 
\end{proof}

We can now show that the boundary metrics $\partial g''_\rho$
have a uniform spectral gap. 

\begin{lemma} \label{bdry_surgery_prop}
There is a $\Lambda > 0$ such that
$\spec D^{\partial g''_\rho} \cap [-\Lambda, \Lambda] = \emptyset$
for all sufficiently small $\rho$.
\end{lemma}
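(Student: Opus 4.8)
The plan is to argue by contradiction on the closed boundary manifolds, reusing the scheme of Lemma~\ref{lemmabdrygap} and Proposition~\ref{first_surgery_prop}. Suppose no such $\Lambda$ exists; then there are $\rho_i \to 0$, eigenvalues $\lambda_i \to 0$ and eigenspinors $\psi_i$ on $(\partial M'', \partial g''_{\rho_i})$ with $D^{\partial g''_{\rho_i}} \psi_i = \lambda_i \psi_i$ and $\int_{\partial M''} |\psi_i|^2 \, dv^{\partial g''_{\rho_i}} = 1$.

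First I would produce a limit spinor away from the surgery. Fix a small $\gamma > 0$. For all $i$ with $\rho_i < \gamma$ the region $\partial M'' \setminus \partial U''(\gamma)$, equipped with $\partial g''_{\rho_i}$, is isometric to the fixed compact set $\partial M \setminus U_S(\gamma) \subset (\partial M, \partial g)$, on which $D^{\partial g}\psi_i = \lambda_i \psi_i$ with $|\lambda_i| < 1$. By Lemma~\ref{C^2-estimate} the restrictions of $\psi_i$ are bounded in $C^2$ on each compact subset of $\partial M \setminus S$, uniformly in $i$; by Ascoli's theorem (Lemma~\ref{lemma_Ascoli}) and a diagonal argument over an exhaustion of $\partial M \setminus S$, a subsequence converges in $C^1_{\rm loc}(\partial M \setminus S)$ and weakly in $L^2_{\rm loc}$ to a spinor $\psi$ with $D^{\partial g}\psi = 0$ weakly on $\partial M \setminus S$ and $\Vert \psi \Vert_{L^2(\partial M, \partial g)} \le 1$. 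Since $\dim S = k \le n - 2$, i.e.\ $S$ has codimension $\ge 2$ in $\partial M$, the removal-of-singularities argument (cf.\ Lemma~\ref{removal_singularities} and \cite[Lemma~2.4]{ammann_dahl_humbert_09}) upgrades this to $D^{\partial g}\psi = 0$ weakly on all of $\partial M$.

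The crux is to show $\psi \not\equiv 0$, and this is exactly what Lemma~\ref{fundamental_estimate_first} is for: it prevents the $L^2$-mass of $\psi_i$ from concentrating in the surgery region. Fix once and for all an $s$ with $\rho_i < s < 2s < R'_{\rm max}/2$ (valid for all $i$ since $\rho_i \to 0$); since $\lambda_i \to 0$ we may also assume $32 \lambda_i^2 s^2 < 1/2$ for all $i$. Assume $\psi \equiv 0$. Since the metrics agree with $\partial g$ on the fixed compact annulus $\partial U''(2s) \setminus \partial U''(s) = U_S(2s) \setminus U_S(s)$ and on $\partial M'' \setminus \partial U''(s) = \partial M \setminus U_S(s)$, we get $\int_{\partial U''(2s) \setminus \partial U''(s)} |\psi_i|^2 \, dv^{\partial g''_{\rho_i}} \to 0$ and $\int_{\partial M'' \setminus \partial U''(s)} |\psi_i|^2 \, dv^{\partial g''_{\rho_i}} \to 0$; by Lemma~\ref{fundamental_estimate_first} also $\int_{\partial U''(s)} |\psi_i|^2 \, dv^{\partial g''_{\rho_i}} \le 128 \int_{\partial U''(2s) \setminus \partial U''(s)} |\psi_i|^2 \, dv^{\partial g''_{\rho_i}} \to 0$. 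Adding the last two contradicts $\int_{\partial M''} |\psi_i|^2 \, dv^{\partial g''_{\rho_i}} = 1$. Hence $\psi$ is a nontrivial $L^2$ harmonic spinor on $(\partial M, \partial g)$. But after Steps~1 and~3 we have $\partial g' = \partial g$, which carries the spectral gap $(-\Lambda_0, \Lambda_0)$ of Lemma~\ref{lemmabdrygap}, so $D^{\partial g}$ is invertible --- a contradiction. Thus a uniform gap $\Lambda > 0$ exists.

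I expect the only genuinely delicate step to be the no-concentration argument; since Lemma~\ref{fundamental_estimate_first} has been isolated beforehand, what remains here is the routine elliptic-estimate-plus-removal-of-singularities packaging already used in Steps~1--3.
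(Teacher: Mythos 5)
Your proposal follows the same line as the paper's proof: contradiction, $C^2$-estimate plus Ascoli plus a diagonal argument to extract a $C^1_{\rm loc}$ limit on $\partial M \setminus S$, removal of singularities across the codimension-$\ge 2$ sphere $S$, and Lemma~\ref{fundamental_estimate_first} to rule out concentration near $S$, with the gap for $D^{\partial g}$ giving the final contradiction. The only cosmetic difference is that you argue by assuming $\psi\equiv 0$ to get $\int_{\partial M\setminus U_S(s)}|\psi_i|^2\to 0$, whereas the paper directly deduces $1\le 129\int_{\partial M\setminus U_S(s)}|\phi_i|^2\,dv^{\partial g}$ and passes to the limit on the compact complement; both are correct and essentially equivalent.
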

\begin{proof}
For a contradiction assume that there is a sequence $\rho_i \to 0$
such that there are eigenspinors 
$\phi_i\in L^2(\partial M'', \partial g_{\rho_i}'')$ with
$D^{\partial g_{\rho_i}''} \phi_i = \lambda_i \phi_i$ and 
$\lambda_i \to 0$. We normalize the eigenspinors by 
$\int_{\partial M''} |\phi_i|^2 \, dv^{\partial g''_{\rho_i}} = 1$. 

Fix $\delta > 0$. Then with $\partial g''_{\rho_i}=\partial g$ on
$\partial M''\setminus \partial U''(\delta)$ for all $i$ with $\rho_i
< \delta$ and from Lemma~\ref{C^2-estimate} we get that for those $i$ 
$\phi_i$ is uniformly bounded in $C^2(\partial M'' \setminus \partial
U'' (\delta),g)$. Due to Ascoli's theorem, Lemma~\ref{lemma_Ascoli},
we get that $\phi_i \to \phi$ strongly in 
$C^1(\partial M'' \setminus \partial U'' (\delta))$
and $D^{\partial g} \phi = 0$ weakly on 
$\partial M \setminus U_S(\delta)$. Letting $\delta $ tend to zero and
taking a diagonal sequence we find that $D^{\partial g} \phi = 0$
weakly on $\partial M \setminus S$. Since 
$\Vert \phi_i \Vert_{L^2(\partial M \setminus U_S(\rho_i))} \leq 1$, 
we get $\phi \in L^2(\partial M \setminus S)$. Using the result on
removal of singularities on closed manifolds in 
\cite[Lemma~2.4]{ammann_dahl_humbert_09} we see that 
$D^{\partial g} \phi = 0$ holds weakly on $\partial M$. 

It remains to show that $\phi$ does not vanish identically. For a
fixed $s < R'_{\rm max}/4$ Lemma~\ref{fundamental_estimate_first} 
gives
\[ \begin{split}
\frac{1}{128} \int_{\partial U''(s)} |\phi_i|^2
\, dv^{\partial g''_{\rho_i}}
&\leq 
\int_{\partial U''(2s) \setminus \partial U''(s)} |\phi_i|^2
\, dv^{\partial g''_{\rho_i}} \\
&\leq 
\int_{\partial M'' \setminus \partial U''(s)} 
|\phi_i|^2 \, dv^{\partial g''_{\rho_i}} 
\end{split} \]
for all $i$ with $\rho_i<s$ and
$\lambda_{\rho_i}<\frac{4}{\sqrt{s}}$. Therefore, we get 
\[
\frac{1}{128} \int_{\partial M''} |\phi_i|^2
\, dv^{\partial g''_{\rho_i}}
\leq 
(1 + \frac{1}{128})
\int_{\partial M'' \setminus \partial U''(s)}
|\phi_i|^2 \, dv^{\partial g''_{\rho_i}} ,
\]
and
\[
1 \leq 
129\int_{\partial M \setminus U_S(s)} |\phi_i|^2
\, dv^{\partial g}.
\]
Since $\partial M\setminus U_S(s)$ is compact, $\phi$ cannot vanish
identically. Thus, $\phi$ is a harmonic spinor on 
$(\partial M, \partial g)$ which gives the required contradiction. 
\end{proof}

Finally, we are ready to prove Proposition~\ref{second_surgery_prop}.

\begin{proof}[Proof of Proposition~\ref{second_surgery_prop}]
From Lemma~\ref{bdry_surgery_prop} and Proposition
\ref{propbdrygap} we know that $D^{g''_\rho}$ 
has a uniform gap in the essential spectrum for all small $\rho$. 
We argue by contradiction and assume that there are $L^2$-harmonic
spinors $\phi_i$ for $g''_{\rho_i}$ as $\rho_i \to 0$. We normalize
the harmonic spinors by 
$\int_{M''_\infty} |\phi_i|^2 \, dv^{g''_{\rho_i}} = 1$. The goal is to
prove that these converge to an $L^2$-harmonic spinor on 
$(M_\infty' \setminus B,g')$, which then gives an $L^2$-harmonic
spinor on $(M_\infty',g')$ and, thus, a contradiction. 

The next step is similar to the proof of~\ref{first_approx_prop}.
Fix $\delta > 0$ small enough. Note that for all $i$ with
$\rho_i<\delta$ we have $(M_\infty''\setminus U''(\delta),
g''_{\rho_i})=(M_\infty'\setminus U_B(\delta), g')$. By Lemma
\ref{C^2-estimate} we obtain that $\phi_i$ is 
uniformly bounded in $C^2(K)$ for any compact subset
$K \subset M'_\infty \setminus U_B(\delta)$. From Ascoli's Theorem,
Lemma~\ref{lemma_Ascoli}, we get $\phi_i \to \phi$ strongly in 
$C^1(K)$ and $D^{g'}\phi=0$ weakly on each $K$. Thus, 
$\phi \in C^1(M'_\infty\setminus U_B(\delta))$. Hence, if $\delta\to
0$, we get a spinor $\phi \in C^1_{\rm loc}(M_\infty' \setminus B)$
with $D^{g'} \phi = 0$ weakly on $M'_\infty \setminus B$. Using 
Lemma~\ref{removal_singularities} we see that $D^{g'} \phi = 0$ on
$M'_\infty$. 

It remains again to show that $\phi$ does not vanish identically. 
For a fixed $\delta \in (0, R'_{\rm{max}})$ we get from 
Lemma~\ref{fundamental_estimate_second} that
\[ \begin{split}
\frac{1}{8} 
\int_{U''(\delta)} |\phi_i|^2 \, dv^{g''_{\rho_i}}
&\leq 
\int_{U''(2\delta) \setminus U''(\delta)} 
| \phi_i |^2 \, dv^{g''_{\rho_i}}\\
&\leq 
\int_{M''_\infty \setminus U''(\delta)} 
| \phi_i |^2 \, dv^{g''_{\rho_i}}\\
\end{split} \]
for all $i$ with $\rho_i < \delta$. It follows that 
\[
\frac{1}{8} 
= 
\frac{1}{8} \int_{M''_\infty} |\phi_i|^2 \, dv^{g''_{\rho_i}} 
\leq
\frac{9}{8} \int_{M''_\infty \setminus U''(\delta)} 
| \phi_i |^2 \, dv^{g''_{\rho_i}}, 
\]
or 
\[
\frac{1}{9}
\leq \int_{M'_\infty \setminus U_B(\delta)} | \phi_i |^2 \, dv^{g'}.
\]
Since we know from Proposition~\ref{propbdrygap} that each harmonic
spinor $\phi_i$ decays exponentially, this implies as in the proof 
of Proposition~\ref{first_approx_prop} that $\phi_i$ cannot converge 
to zero on compact subsets. Hence, $\phi$ cannot be identically zero. 
Thus, $\phi$ is a nontrivial $L^2$-harmonic spinor on $(M_\infty', g')$ 
which is a contradiction.
\end{proof}

\begin{proof}[Proof of Theorem~\ref{thm_handle_attachment}]
The Theorem now follows by choosing 
$g'' = g''_{\rho_0}$ with $\rho_0$ sufficiently small. 
\end{proof}

%%%%%%%%%%%%%%%%%%%%%%%%%%%%%%%%%%%%%%%%%%%%%%%%%%%%%%%%%%%%%%%%%
\section{Non-isotopic metrics with invertible Dirac operator}
\label{section_non-isotopic}
%%%%%%%%%%%%%%%%%%%%%%%%%%%%%%%%%%%%%%%%%%%%%%%%%%%%%%%%%%%%%%%%%

In this section we show that $\Rinv(M)$ has infinitely many components
if $\dim M = 3$. This extends previous results using surgery techniques
for positive scalar curvature \cite[Theorem~4]{carr_88}, 
\cite[Chapter~4, Theorem~7.7]{lawson_michelsohn_89}, and for invertible 
Dirac operator \cite[Theorem~3.3]{dahl_08}, where $\dim M = 4m-1$, 
$m \geq 2$. For the case of metrics with invertible Dirac operator the 
fact that $\Rinv(M)$ has infinitely many components follows more easily 
from the explicit examples of spectral flow constructed in 
\cite{hitchin_74}, \cite{baer_96} using families of Berger metrics.
The motivation for the argument given here is primarily to illustrate 
the use of surgery techniques to prove spectral results for the Dirac 
operator.

\begin{defn} \label{def_concordance} 
Let $M$ and $N$ be closed Riemannian spin manifolds with metrics
$g^0, g^1\in \Rinv(M)$ and $h\in \Rinv(N)$. 
\begin{enumerate}
\item 
$g^0$ and $g^1$ are called concordant if there exists a
metric $\overline{g}\in \Rinv([0,1]\times M)$ with
$\overline{g}|_{\{i\}\times M}=g^i$ for $i=0,1$. 
\item 
$g^0$ and $g^1$ are called isotopic if there exists a
smooth path of metrics $g_t$ in $\Rinv(M)$ ($t\in\mathbb{R}$) with
$g_t=g^0$ for $t\leq 0$ and $g_t=g^1$ for $t\geq 1$. 
\item 
$g^0$ and $h$ are called bordant if there is a manifold $W$ with 
a metric $g^W\in \Rinv(W)$ and $\partial (W, g^W) = 
(M,g^0) \sqcup (N^-, h)$ where $N^-$ denotes the manifold $N$ 
equipped with the reverse orientation.
\end{enumerate}
\end{defn}

Both isotopy and concordance are equivalence relations
\cite[Corollary~2.2]{dahl_08}. The corresponding sets of equivalence 
classes are denoted by $\conc \Rinv(M)$ for the concordance classes 
and by $\pi_0 \Rinv(M)$ for the isotopy classes. Isotopic metrics are 
concordant \cite[Corollary~2.1]{dahl_08}, this is the reason why
non-concordant metrics can be used to detect path components in
$\Rinv(M)$.

We will use the handle attachment result to construct non-concordant
metrics in $\Rinv(S^3)$---and the same for other 3-manifolds---from a 
handle decomposition of a 4-manifold with non-zero index.

\begin{lemma}
There are $4$-dimensional spin manifolds $Y^i$, $i \in \mZ$, with
boundary $\partial Y^i = S^3$, and metrics $g^{Y^i} \in \Rinv(Y^i)$
for which $\alpha(Y^i \cup_{S^3} (Y^j)^-) = c(i-j)$ where $c \neq 0$.
\end{lemma}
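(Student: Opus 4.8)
The plan is to realize the $Y^i$ as iterated handle attachments to the $4$-ball, using Theorem~\ref{thm_handle_attachment}, and to read off the $\alpha$-invariants from a single handle decomposition of a closed spin $4$-manifold of nonzero index. First I would fix a closed spin $4$-manifold $X$ with $\alpha(X)\neq 0$ that admits a handle decomposition with no $3$-handles. The $K3$ surface works: it is simply connected, hence admits a handle decomposition without $1$-handles (see Gompf--Stipsicz), and turning that decomposition upside down gives one of $K3$ with no $3$-handles; and $\alpha(K3)\neq 0$ since, for instance, the $\hat{A}$-genus of $K3$ equals $2$. So write $X=h^0\cup(\text{$1$- and $2$-handles})\cup h^4$, and let $Y$ be the union of all handles of $X$ other than the top handle $h^4$. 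Then $\partial Y=S^3$ and $Y$ is built from $B^4=h^0$ by attaching the $1$- and $2$-handles in turn; in the language of the present paper these are handle attachments of codimension $n-k=3$ (a $1$-handle, $k=0$) and $n-k=2$ (a $2$-handle, $k=1$), so the codimension hypothesis of Theorem~\ref{thm_handle_attachment} is satisfied. Each attachment is spin-structure preserving because $X$ is spin; in particular the attaching circle of each $2$-handle carries the bounding spin structure, as the other choice would obstruct spinness of $X$. Deleting the interior of $h^0$ from $Y$ exhibits a spin cobordism $V$ from $S^3$ to $S^3$, diffeomorphic to $S^3\times[0,1]$ with the $1$- and $2$-handles attached along $S^3\times\{1\}$; thus $B^4\cup V=Y$ and $Y\cup_{S^3}(B^4)^-=X$.

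I would then set $Y^0\definedas B^4$; for $i>0$ let $Y^i$ be $B^4$ with $i$ copies of $V$ stacked end to end, and for $i<0$ let $Y^i$ be $B^4$ with $|i|$ copies of $V$, orientation-reversed, stacked end to end. In all cases $\partial Y^i=S^3$, and $Y^1=Y$. For the metrics, put on $B^4=Y^0$ a positive scalar curvature metric which equals $\sigma^3+dt^2$ near $\partial B^4=S^3$ (e.g.\ a round hemisphere metric, flattened near the equator): since the attached cylinder $S^3\times[0,\infty)$ also has positive scalar curvature, the Lichnerowicz formula gives a uniform spectral gap for the Dirac operator on $B^4_\infty$, so this metric lies in $\Rinv(B^4)$. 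Since for $i>0$ the manifold $Y^i$ is obtained from $Y^{i-1}$ by attaching the $1$- and $2$-handles of one more copy of $V$ (and symmetrically for $i<0$), all of codimension $\geq 2$, Theorem~\ref{thm_handle_attachment} applied once per handle produces, inductively on $|i|$, a metric $g^{Y^i}\in\Rinv(Y^i)$.

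Finally I would compute $\alpha(Y^i\cup_{S^3}(Y^j)^-)$. Here I use that $\alpha$ is a spin-bordism invariant, that $\alpha(M^-)=-\alpha(M)$ (since $M\sqcup M^-$ bounds $M\times[0,1]$), and the gluing formula
\[
\alpha\bigl(P\cup_{S^3}Q^-\bigr)=\alpha\bigl(P\cup_{S^3}(B^4)^-\bigr)-\alpha\bigl(Q\cup_{S^3}(B^4)^-\bigr)
\]
for compact spin $4$-manifolds $P,Q$ with $\partial P=\partial Q=S^3$, which holds because $\Omega_3^{\mathrm{spin}}=0$, so that capping an $S^3$-boundary by $B^4$ is well defined and additive under gluing along $S^3$ at the level of bordism classes. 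With $P=Y^i$ and $Q=Y^j$ this reduces the claim to computing $a_i\definedas\alpha(Y^i\cup_{S^3}(B^4)^-)$. Put $c\definedas a_1=\alpha(X)\neq 0$. Peeling off one copy of $V$ at a time and applying the gluing formula (together with $Y^1\cup_{S^3}(B^4)^-=X$, and, for $i<0$, the orientation-reversed version of this identity) gives $a_i=ic$ for all $i\in\mZ$. Hence $\alpha(Y^i\cup_{S^3}(Y^j)^-)=a_i-a_j=c(i-j)$, as asserted.

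The step I expect to be the main obstacle is the topological one: producing a closed spin $4$-manifold of nonzero index whose handle decomposition has no $3$-handles, so that every handle attachment in the construction has codimension $\geq 2$ and Theorem~\ref{thm_handle_attachment} applies. This is exactly where the availability of the handle-attachment result in codimension $\geq 2$ (rather than only $\geq 3$, as for positive scalar curvature) is essential, and it is what makes the case $\dim M=3$ accessible; the remaining analytic input is contained in Theorem~\ref{thm_handle_attachment} and the Lichnerowicz estimate on $B^4$, and the index computation is routine bordism bookkeeping.
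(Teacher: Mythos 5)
Your proposal is essentially the same as the paper's proof: both use the $K3$ surface, remove a $B^4$ to get $Y^1$ with $\partial Y^1 = S^3$, stack copies to form $Y^i$, start from a positive scalar curvature (torpedo-type) metric on $B^4 = Y^0$ in $\Rinv$, extend over the handles via Theorem~\ref{thm_handle_attachment}, and compute $\alpha$ by spin bordism invariance. One small caution: your phrase ``it is simply connected, hence admits a handle decomposition without $1$-handles'' is not a valid implication in dimension $4$ (whether all simply connected closed $4$-manifolds admit $1$-handle-free decompositions is a well-known open problem); what actually does the work is the explicit decomposition of $K3$ in Gompf--Stipsicz, Corollary~6.3.19, which the paper cites and which has only $0$-, $2$-, and $4$-handles -- note this also makes your allowance for $1$-handles (codimension $3$, also fine for the theorem) unnecessary in this particular case.
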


\begin{proof}
We let $Y^0$ be the 4-dimensional ball $B^4$ with a ``torpedo''
metric $g^{Y^0} \in \Rinv(Y^0)$ of positive scalar curvature, such
that $\partial g^{Y^0}$ is the standard round metric on $S^3$, see for
example \cite[Section~1.3]{walsh_11}. For positive $i$ we define the
manifolds $Y^i$ as the connected sum of $i$ copies of the ${\rm K3}$
surface with an open disc removed. For negative $i$ we set 
$Y^i \definedas (Y^{-i})^-$. Using the spin bordism invariance of 
$\alpha$ we have
\[
\alpha(Y^i \cup_{S^3} (Y^j)^-) % = \alpha(\#^{(i-j)} {\rm K3})
= (i-j) \alpha({\rm K3}) 
= c(i-j)
\]
where $c \definedas \alpha({\rm K3}) \neq 0$. It remains to find
metrics $g^{Y^i} \in \Rinv(Y^i)$ for $i > 0$. 

From \cite[Corollary~6.3.19]{gompf_stipsicz_99} we know that there
exists a handle decomposition of the ${\rm K3}$ surface which starts
from the 4-dimensional ball $B^4$, then attaches a number of
$2$-handles $B^2\times B^2$, before finishing by attaching a $B^4$. 
This means that $Y^i$ can be obtained by attaching a number of 
$2$-handles to an initial $B^4$. Starting with the metric $g^{Y^0}$ 
on $B^4$ we apply Theorem~\ref{thm_handle_attachment} to extend it 
over the $2$-handles to a metric $g^{Y^i} \in \Rinv(Y^i)$.
\end{proof}

Let $h^i \in \Rinv(S^3)$ be defined by $h^i \definedas g^{Y^i}|_{S^3}$.

\begin{prop} \label{non_concordant_metrics} 
Suppose $M$ is a closed $3$-dimensional Riemannian spin manifold and 
$g \in \Rinv(M)$. Then there are metrics $g^i \in \Rinv(M)$, $i\in\mZ$,
such that $g^i$ is bordant to $g$ but $g^i$ is not concordant to $g^j$
for $i \ne j$. 
\end{prop}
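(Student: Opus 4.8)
The plan is to implant the ``heads'' $(Y^i,g^{Y^i})$ produced by the previous Lemma into $(M,g)$ by a connected sum, and then, from a putative concordance, to assemble a closed spin $4$-manifold whose $\alpha$-invariant is forced to be non-zero. One tool will be used repeatedly: the gluing principle for invertibility, namely that if $(V_1,g_1)$ and $(V_2,g_2)$ lie in $\Rinv$, are product near a common boundary piece $(N,h)$, and $D^h$ is invertible, then the glued metric on $V_1\cup_N V_2$ again lies in $\Rinv$. This is a standard cut-and-paste argument using that the invertibility of $D^h$ controls the Dirac operator along the gluing neck; compare \cite{dahl_08}.

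First I would set $h^i\definedas g^{Y^i}|_{S^3}\in\Rinv(S^3)$ and define $g^i\in\Rinv(M)$ as follows. The surgery on an embedding $S^0\hookrightarrow M\sqcup S^3$ with one point in each summand produces $M\#S^3\cong M$ and has codimension $3$; since $D^{g\sqcup h^i}$ has trivial kernel, Theorem~\ref{am_da_hu_09} yields a metric $g^i\in\Rinv(M)$ coinciding with $g$ away from a small ball. To see that $g^i$ is bordant to $g$, let $T_i$ be the trace of this surgery, a compact spin $4$-manifold with $\partial T_i=(M,g)\sqcup(S^3,h^i)\sqcup(M,g^i)^-$; by the codimension-$\ge 3$ trace construction \cite[Proposition~2.5]{dahl_08} it carries a metric in $\Rinv(T_i)$ restricting to $g\sqcup h^i$ and $g^i$ on the two ends. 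Capping off the $(S^3,h^i)$-component with $(Y^i,g^{Y^i})$ and using the gluing principle produces $W_i\definedas T_i\cup_{S^3}Y^i$ with a metric in $\Rinv(W_i)$ and $\partial W_i=(M,g^i)\sqcup(M^-,g)$, so $g^i$ is bordant to $g$ (hence the $g^i$ are mutually bordant). Topologically $W_i$ is $M\times[0,1]$ with $Y^i$ grafted onto a ball in $M\times\{1\}$; since $\partial Y^i=S^3$ regardless of $i$, the resulting ``$g^i$-end'' is again diffeomorphic to $M$.

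For the non-concordance, suppose for contradiction that $g^i$ and $g^j$ are concordant for some $i\ne j$, witnessed by $C\definedas(M\times[0,1],\bar g)$ with $\bar g\in\Rinv$, $\bar g|_{M\times\{0\}}=g^i$ and $\bar g|_{M\times\{1\}}=g^j$. Gluing $W_i$ to $C$ along $(M,g^i)$ and then $\overline{W_j}$ along $(M,g^j)$ yields a bordism from $(M,g)$ to $(M,g)$, whose two $(M,g)$-boundary components I identify by $\operatorname{id}_M$ to obtain a closed spin $4$-manifold $X$. The three hypersurfaces used in the gluing, namely $(M,g^i)$, $(M,g^j)$ and $(M,g)$, all carry invertible Dirac operators, so by the gluing principle $X$ admits a metric $g_X$ with invertible Dirac operator. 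On the other hand $C\cong M\times[0,1]$ is a trivial bordism, so after collapsing it and closing up, $X$ is spin-diffeomorphic to $M\times S^1$ with $Y^i$ and $\overline{Y^j}$ grafted onto disjoint interior balls, i.e.\ $X\cong(M\times S^1)\,\#\,\widehat{Y^i}\,\#\,\overline{\widehat{Y^j}}$, where $\widehat{Y^i}\definedas Y^i\cup_{S^3}B^4$. Since $\alpha$ is additive under connected sum, $\alpha(M\times S^1)=0$ (indeed $\widehat A(M\times S^1)=0$, its Pontryagin class vanishing, so $[M\times S^1]=0$ in $\Ospin_4\cong\mZ$), and $\widehat{Y^i}\,\#\,\overline{\widehat{Y^j}}\cong Y^i\cup_{S^3}(Y^j)^-$ (remove the two capping balls), the previous Lemma gives $\alpha(X)=\alpha\bigl(Y^i\cup_{S^3}(Y^j)^-\bigr)=c(i-j)\ne 0$. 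By the index theorem a closed spin manifold with non-zero $\alpha$-invariant has a non-trivial harmonic spinor, contradicting the invertibility of $D^{g_X}$; hence $g^i$ is not concordant to $g^j$ for $i\ne j$.

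The analytic content beyond Theorem~\ref{thm_handle_attachment} (already used to construct the metrics $g^{Y^i}$) and Theorem~\ref{am_da_hu_09} is only the gluing principle, which is routine. The step I expect to demand the most care is the purely topological one: identifying $X$ and tracking the orientations and spin structures, so as to pin down $\alpha(X)=c(i-j)$. This is the reason for organizing the argument around the explicit bordisms $W_i$ and $C$ rather than around abstract concordance classes.
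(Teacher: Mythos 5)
Your argument is correct and matches the paper's proof in all essentials: construct bordisms $W_i$ from $(M,g)$ to $(M,g^i)$ by grafting the $(Y^i,g^{Y^i})$ of the preceding Lemma onto a connected-sum bordism, then close up a hypothetical concordance between $g^i$ and $g^j$ to a closed spin $4$-manifold carrying an invertible Dirac operator, identify it as $(M\times S^1)\#(Y^i\cup_{S^3}(Y^j)^-)$, and derive the contradiction $0=c(i-j)$. The only (harmless) variation is that you invoke Theorem~\ref{am_da_hu_09} together with \cite[Proposition~2.5]{dahl_08} for the codimension-$3$ connected-sum bordism, where the paper cites Theorem~\ref{thm_handle_attachment} directly; both suffice there, and the paper's new codimension-$2$ handle result is still what powers the preceding Lemma producing the $(Y^i,g^{Y^i})$.
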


\begin{proof}
By Theorem~\ref{thm_handle_attachment} there is for $i \in \mZ$
a metric $g^i$ on $M \# S^3 = M$ which is bordant to $g \sqcup h^i$ 
on $M \sqcup S^3$. The metric $h^i \in \Rinv(S^3)$ is bordant to zero
through the bordism $(Y^i,g^{Y^i})$, using 
\cite[Proposition~2.1]{dahl_08} we can attach this bordism to the
handle attachment bordism and conclude that $g^i$ is bordant to
$g$. Denote by $(W^i, g^{W^i})$ the bordism between $(M,g^i)$ and
$(M,g)$ we have now constructed. The manifold $W^i$ is diffeomorphic
to the boundary connected sum of $[0,1] \times M$ and $Y^i$.

For $i,j\in \mZ$ assume that $g^i, g^j \in \Rinv(M)$ are
concordant. We then find a metric with invertible Dirac operator on
the closed manifold $W^i \cup (W^j)^-$ obtained by attaching the
identical (but oppositely oriented) boundary components $(M,g)$ to
each other, and by attaching 
$(M,g^i)$ to $(M,g^j)$ using a concordance of the metrics. Then
$\alpha(W^i \cup (W^j)^-) = 0$. Further, $W^i \cup (W^j)^-$ is
diffeomorphic to the connected sum 
$(S^1 \times M) \# (Y^i \cup_{S^3} (Y^j)^-)$, so 
\[
0 = \alpha(W^i \cup (W^j)^-)
= \alpha(S^1 \times M) + \alpha(Y^i \cup_{S^3} (Y^j)^-)
= \alpha(Y^i \cup_{S^3} (Y^j)^-)
= c(i-j)
\]
and we conclude that $i=j$.
\end{proof}

%%%%%%%%%%%%%%%%%%%%%%%%%%%%%%%%%%%%%%%%%%%%%%%%%%%%%%%%%%%%%%%%%
\section{Concordance theory}
\label{section_concordance}
%%%%%%%%%%%%%%%%%%%%%%%%%%%%%%%%%%%%%%%%%%%%%%%%%%%%%%%%%%%%%%%%%

In this section we study the concordance classes of metrics with 
invertible Dirac operator on a manifold with boundary. Following 
closely the work by Stolz for positive scalar curvature we prove 
an existence and classification theorem, see \cite{stolz_95}, 
\cite{stolz_xx}. For previous work in the positive scalar curvature 
case see \cite{hajduk_91}, \cite{gajer_93}.

Let $M$ be a manifold with boundary, and let 
$h \in \Rinv(\partial M)$. We define ${\mathcal R}(M \rel h)$ as 
the set of Riemannian metrics $g$ on $M$ for which $\partial g = h$. 
Further we set $\Rinv(M \rel h) \definedas {\mathcal R}(M \rel h) 
\cap \Rinv(M)$.

By $\Ospin_n$ we denote the ordinary spin bordism group of 
dimension $n$. We also define 
\[ 
\Oinv_n
\definedas 
\{ (M,g) \mid M \text{ is a closed spin $n$-manifold},\ 
g \in \Rinv(M) \} / \sim,
\]
where the equivalence relation $\sim$ is defined by 
$(M_0,g_0) \sim (M_1,g_1)$ if there is a spin manifold $W$ with 
$\partial W = M_0 \sqcup M_1$ and a metric
$H \in \Rinv(W)$ such that $H|_{\partial W}= g_0 \sqcup g_1$ and 
all involved orientations and spin structures are compatible.

%%%%%%%%%%%%%%%%%%%%%%%%%%%%%%%%%%%%%%%%%%%%%%%%%%%%%%%%%%%%%%%%%
\subsection{Manifolds with corners}
\label{subsec_manifolds_with_corners}
%%%%%%%%%%%%%%%%%%%%%%%%%%%%%%%%%%%%%%%%%%%%%%%%%%%%%%%%%%%%%%%%%

To study concordances of metrics on manifolds with boundary it is
necessary to extend most of the theory and results obtained so far
to manifolds with corners. 

A manifold $M$ of dimension $n$ with corners of codimension $2$ is 
a smooth manifold with charts modelled on open sets in 
$\mR^{n-2} \times (-\infty, 0]^2$. Points with a neighbourhood 
diffeomorphic to a neighbourhood of the boundary of 
$\mR^{n-2} \times (-\infty, 0]^2$ constitute the boundary
$\partial M$ of $M$. Points with a neighbourhood 
diffeomorphic to a neighbourhood of the corner of 
$\mR^{n-2} \times (-\infty, 0]^2$ constitute the corner 
$\partial^2 M$ of $M$. We assume that the boundary itself 
constitutes an embedded submanifold with boundary in $M$.
We consider only Riemannian metrics $g$ on $M$ 
which have a product structure $g = \partial g + dt^2$ near 
$\partial M$ and a double product structure 
$g = \partial^2 g + dt_1^2 + dt_2^2$ near $\partial^2 M$.

As for manifolds with boundary we let $M_{\infty}$ be the manifold 
$M$ with half-infinite cylindrical ends attached, 
\[ 
(M_\infty,g) 
\definedas 
(M,g) 
\cup (\partial M \times [0,\infty), \partial g + dt^2 )
\cup (\partial^2 M \times [0,\infty)^2, \partial^2 g + dt_1^2 + dt_2^2).
\]

Now, $(M_\infty, g)$ is a complete Riemannian spin manifold. Thus, as 
in the case of manifolds with boundaries we can define the notion of 
invertibility of the Dirac operator and we have corresponding results 
for its spectrum.

We say that $(M,g)$ has invertible Dirac operator if the Dirac operator of 
$(M_\infty ,g)$ is invertible when it acts on $L^2$-sections of the spinor 
bundle.

The next proposition gives information about the spectral theory on those 
manifolds and is a version of Proposition \ref{propbdrygap} for manifolds 
with corners.

\begin{prop} \label{propbdrygap_corners}
Let $(M, g)$ be a Riemannian spin manifold with corners $X_i$. Let the 
boundary $\partial M$ be decomposed into finitely many manifolds with 
boundaries $N_i$ such that each boundary $\partial N_i$ is a corner 
$X_{j(i)}$. Assume that the Dirac operator on $(X_i, \partial^2 g)$ and 
the Dirac operator on $((N_i)_\infty, \partial g)$ are invertible. 
Moreover, let $M_T \definedas M 
\cup \left( \bigcup_i N_i \times [0,T] \right) 
\cup \left( \bigcup_i X_i\times [0,T]^2 \right)$ with the obvious 
identifications of the boundaries. Then the following holds.
\begin{enumerate}
\item \cite[Prop. 6.1]{mueller_96} 
The Dirac operator on $(M_\infty, g)$ is invertible. 
\item \cite[Prop. 2.19]{mueller_96}
There are constants $c,C >0$ such that for all harmonic spinors 
$\phi$ on $M_\infty$
\begin{equation} \label{harmonicdecaycorners}
\int_{M\setminus M_T} |\phi|^2 \, dv^{ g} \leq 
Ce^{-cT} \Vert \phi\Vert_{L^2(M_\infty)}^2
\end{equation}
for all $L^2$-harmonic spinors $\phi$ on $M_\infty$.
\item \cite[from the proof of Prop. 2.19]{mueller_96} 
Let $\Lambda>0$ be such that the Dirac operators on 
$((N_i)_\infty,\partial g)$ have a spectral gap on $(-\Lambda, \Lambda)$. 
Then in \eqref{harmonicdecaycorners} the constants can be chosen as
$c = \Lambda$ and $C=2$.
\end{enumerate}
\end{prop}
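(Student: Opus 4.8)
The plan is to obtain the three parts from Müller's spectral analysis of Dirac operators on manifolds with corners \cite{mueller_96} --- parts (1) and (2) being \cite[Prop.~6.1]{mueller_96} and \cite[Prop.~2.19]{mueller_96} --- together with the slice-function technique already used for Part~(3) of Proposition~\ref{propbdrygap}, which is what produces the explicit constants. The structural remark that organizes everything is that, once the cylindrical and quarter-cylindrical pieces have been attached, $(M_\infty, g)$ is, outside a compact set, a disjoint union of half-infinite product cylinders over the complete manifolds $(N_i)_\infty$: the face cylinder $N_i \times [0,\infty)$ glued to the corner piece $X_{j(i)} \times [0,\infty)^2$ along $X_{j(i)} \times [0,\infty) \times \{0\}$ is exactly $(N_i)_\infty \times [0,\infty)$, since $(N_i)_\infty = N_i \cup_{X_{j(i)}} (X_{j(i)} \times [0,\infty))$. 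On such a product end the Bochner--Lichnerowicz identity reads $(D^g)^2 = (D^{(N_i)_\infty})^2 - (\nabla^g_{\partial_t})^2$, just as on the cylinders in Proposition~\ref{propbdrygap}, where $\partial_t$ is the unit field along the $[0,\infty)$-factor; the hypotheses provide a $\Lambda > 0$ with $(-\Lambda, \Lambda) \cap \spec D^{(N_i)_\infty} = \emptyset$ for every $i$ (as in the statement of Part~(3)), and for the corners bounding a face this already forces $0 \notin \spec D^{X_i}$ by Proposition~\ref{propbdrygap}~(1) applied to $N_i$.

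For Part~(1) I would argue in two steps. First, the localization argument behind Proposition~\ref{propbdrygap}~(1), applied to the ends over the $(N_i)_\infty$ and using $(D^g)^2 = (D^{(N_i)_\infty})^2 - (\nabla^g_{\partial_t})^2$ together with the gap of $D^{(N_i)_\infty}$, shows that $\Vert D^g\psi\Vert_{L^2} \geq \Lambda\Vert\psi\Vert_{L^2}$ for spinors supported in those ends, whence the essential spectrum of $D^g$ on $M_\infty$ misses $(-\Lambda, \Lambda)$. Second --- and this is the substantive point --- one must rule out zero as an eigenvalue; this is exactly \cite[Prop.~6.1]{mueller_96}, whose proof uses a gluing argument embedding the $L^2$-kernel of $D^g$ over $M_\infty$ into a sum of ($L^2$- or extended-$L^2$-)kernels over the strata at infinity, all of which vanish by hypothesis. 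At this stage one only needs to check that the standing product-near-$\partial M$ and double-product-near-$\partial^2 M$ assumptions place us inside Müller's framework, and then invoke his theorem.

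For Parts~(2) and~(3) I would reprove the decay estimate directly, transcribing the proof of Proposition~\ref{propbdrygap}~(3) onto each product end. Given an $L^2$-harmonic spinor $\phi$ on $M_\infty$, put $l_i(t)^2 \definedas \int_{(N_i)_\infty \times \{t\}} |\phi|^2 \, dv$ on the end over $(N_i)_\infty$. Then $(D^g)^2 = (D^{(N_i)_\infty})^2 - (\nabla^g_{\partial_t})^2$, $D^g\phi = 0$ and the gap of $D^{(N_i)_\infty}$ give $l_i'' \geq \Lambda^2 l_i$, while $\int_0^\infty l_i(t)^2 \, dt < \infty$ forces $l_i' \leq 0$, exactly as in the excerpt ($l_i(t) > 0$ wherever $\phi \not\equiv 0$ because the zero set of $\phi$ has vanishing codimension-one measure, cf.\ \cite{baer_99}); hence $l_i(t) \leq e^{-\Lambda(t-T)} l_i(T)$ for $t \geq T \geq 0$. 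Summing these bounds over all faces, and applying the one-dimensional estimate successively in the two cylinder coordinates over each corner piece $X_j \times [0,\infty)^2$, gives \eqref{harmonicdecaycorners} with rate $c = \Lambda$; tracking the numerical constants and using the full norm $\Vert\phi\Vert_{L^2(M_\infty)}$ on the right-hand side, as in \eqref{harmonicdecaycorners}, yields $C = 2$. The integrations by parts over the non-compact slices $(N_i)_\infty$ are legitimate because the harmonic $L^2$-spinor $\phi$ already decays at the ends of each slice, a decay one establishes first on the lower-dimensional corner strata and then propagates outward, mirroring the hierarchical organization of Müller's proof.

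I expect the only genuinely non-routine ingredient to be the exclusion of zero from the spectrum in Part~(1): this is where the invertibility hypotheses on the corner and edge-at-infinity strata are really needed, and it rests on Müller's gluing theorem rather than on the elementary estimates. The essential-spectrum gap is the standard localization argument of Proposition~\ref{propbdrygap}~(1), and Parts~(2)--(3) are the now-familiar slice-function computation, the only care being the bookkeeping of constants across the corner regions, where the decay has to be extracted in both cylinder directions in turn.
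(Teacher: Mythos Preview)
The paper does not supply its own proof of this proposition: each part is stated with a citation to M\"uller \cite{mueller_96}, and no further argument is given. Your proposal therefore goes well beyond what the paper does, and its overall shape --- invoke M\"uller for (1) and (2), and rerun the slice-function computation of Proposition~\ref{propbdrygap}~(3) on the product ends to extract the explicit constants in (3) --- is sound and is presumably what lies behind the citation ``from the proof of Prop.~2.19'' in part (3).

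Two remarks. First, your structural claim that outside a compact set $M_\infty$ is a \emph{disjoint} union of half-cylinders $(N_i)_\infty \times [0,\infty)$ is not quite right: each corner piece $X_j \times [0,\infty)^2$ is shared between the two adjacent face-cylinders, so these product regions overlap. This does not damage the decay argument --- the differential inequality $l_i'' \geq \Lambda^2 l_i$ with $l_i' \leq 0$ holds on each product region regardless of overlaps --- but it does mean the bookkeeping toward the constant $C=2$ needs the care you already flag.

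Second, and more substantively: in your discussion of Part~(1) you describe M\"uller's argument as embedding $\ker_{L^2} D^g$ into kernels over the strata at infinity, which then vanish by hypothesis. That is not how such gluing theorems work; invertibility on the boundary and corner strata yields Fredholmness of $D^g$ on $M_\infty$ (equivalently, an essential-spectrum gap, which your first step already gives), but does not by itself exclude a finite-dimensional $L^2$-kernel on $M_\infty$. Note also that Part~(1) as literally written sits uneasily with Part~(2), which speaks of $L^2$-harmonic spinors and would be vacuous under full invertibility; the way the paper actually uses this proposition --- as a drop-in replacement for Proposition~\ref{propbdrygap} in the proof of Theorem~\ref{thm_handle_attach_corners} --- only requires the Fredholm statement. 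Your essential-spectrum argument already delivers that, so the questionable gluing sketch is not needed.
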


Bordisms of manifolds with boundary are naturally manifolds with 
corners. Such a bordism gives rise to a boundary bordism between 
the boundaries. For manifolds with boundary there are obvious 
extensions of the definitions of concordance and isotopy to 
$\Rinv(M \rel h)$. Note that the concordance relation for manifolds 
with boundary then assumes an invertible Dirac operator on a manifold 
with corners. 

Elementary constructions can be performed for metrics with invertible
Dirac operator. A product $M \times N$ with corners has invertible 
Dirac operator if at least one of the factors has. Attaching isometric 
boundary components by a sufficiently long attaching cylinder 
preserves invertibility of the Dirac operator, compare 
\cite[Proposition~2.1]{dahl_08}. Stretching an isotopy of metrics with 
invertible Dirac operator produces a concordance, compare
\cite[Proposition~2.3]{dahl_08}

For a smooth manifold $M$ with corner there is a procedure to round
the corner, producing a smooth manifold $\widetilde{M}$ with boundary.
Next we show that corners can be rounded while preserving invertibility 
of the Dirac operator. Let $\tau$ be a metric on a two-dimensional 
triangular domain $T$ which is a product near the boundary lines and a 
double product near the corners. 

Assume that $M$ is a manifold with corners and $g \in \Rinv(M)$. We
replace the corner piece $\partial^2 M \times [0,\infty)^2$ of 
$(M_\infty, g)$ by a part of 
$( (\partial^2 M \times T)_\infty, \partial^2 g + \tau)$, 
see Figure~\ref{figure_CONC_ROUNDING_CORNER}. 
\begin{figure}[h]
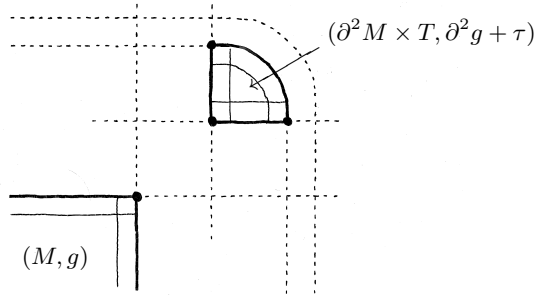

\begin{lpic}[]{FIGS/CONC_ROUNDING_CORNER(0.25)}
\lbl[]{70,50;{\small $(M,g)$}}
\lbl[]{190,155,30;{\small $\xleftarrow{\phantom{XXXX}}$}}
\lbl[]{270,170;{\small $(\partial^2 M \times T, \partial^2 g + \tau)$}}
\end{lpic}
\caption{Rounding the corner.}
\label{figure_CONC_ROUNDING_CORNER}
\end{figure}
If $M$ and $\partial^2 M \times T$ are sufficiently far apart we can 
use a cut-off function with sufficiently small gradient to conclude that 
the resulting manifold with boundary $(\widetilde{M}, \widetilde{g})$ 
has invertible Dirac operator, compare \cite[Proposition~2.1]{dahl_08}.

Next we extend Theorem~\ref{thm_handle_attachment} to manifolds with 
corners. 

\begin{thm} \label{thm_handle_attach_corners}
Let $(M,g)$ be a manifold with corners and $g\in \Rinv(M)$. Let $M''$ be 
obtained by a handle attachment outside a neighbourhood of the corners 
and of codimension at least two. Then for any given neighbourhood of the 
surgery sphere there is a metric $g''\in \Rinv(M'')$ such that $g''=g$ 
outside this neighbourhood. 
\end{thm}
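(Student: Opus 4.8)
The plan is to rerun the four-step construction of the proof of Theorem~\ref{thm_handle_attachment} essentially word for word, feeding in Proposition~\ref{propbdrygap_corners} wherever Proposition~\ref{propbdrygap} was used. The decisive point is that, by hypothesis, the handle is attached in the interior of $\partial M$ at positive distance from the corner $\partial^2 M$; since the surgery sphere $S\cong S^k$ is connected, it lies in a single component-with-boundary $N_{i_0}$ of the decomposition of $\partial M$, and for small enough parameters all the metric changes of Steps~1--4 (the two approximations by product metrics near $S$ and the two surgeries) are supported in a fixed small neighbourhood of $S$ that is disjoint from every corner $X_j$ and from every corner cylindrical end $X_j\times[0,\infty)^2$. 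Thus the corner data $(\partial^2 M,\partial^2 g)$ and the double-product structure near it are untouched throughout, so $D^{\partial^2 g}$ keeps a fixed spectral gap, and every boundary piece $N_i$ with $i\neq i_0$ is unchanged.

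Concretely I would first define the metrics $\overline g_\delta$, $g'_\rho$, $\overline g'_\delta$, $g''_\rho$ by the same local formulas as in Section~\ref{step1} and the subsequent subsections; these formulas only see the geometry near $S$, so nothing needs to change. For each of the four propositions corresponding to Propositions~\ref{first_approx_prop}, \ref{first_surgery_prop}, \ref{Second_approx_prop} and~\ref{second_surgery_prop} I would run the same argument by contradiction: from a sequence of parameters tending to $0$ with non-invertible Dirac operator one extracts normalized $L^2$-harmonic spinors $\phi_i$, passes to a $C^1_{\mathrm{loc}}$ and weak-$L^2$ limit $\phi$ away from the surgery locus via Lemma~\ref{C^2-estimate} and Lemma~\ref{lemma_Ascoli}, extends $\phi$ across that codimension $\ge 2$ locus by Lemma~\ref{removal_singularities}, and rules out $\phi\equiv 0$ using the quantitative decay of Proposition~\ref{propbdrygap_corners}(3) (which bounds the $L^2$-mass outside $M_T$ by $2e^{-\Lambda T}\Vert\phi\Vert_{L^2}^2$ with $\Lambda$ independent of the parameters) together with the a priori concentration estimates Lemmas~\ref{fundamental_estimate_first_n}, \ref{fundamental_estimate_first} and~\ref{fundamental_estimate_second}, whose proofs are local near $S$ and so carry over unchanged. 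To invoke Proposition~\ref{propbdrygap_corners}(1) and~(3) one needs $D$ invertible on the corners $X_j$ (automatic, since they are untouched) and on the boundary pieces $((N_i)_\infty,\partial g''_\rho)$, uniformly in $\rho$.

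The one ingredient that must be redone rather than merely cited is precisely this uniform gap on the $(N_i)_\infty$, that is, the corner analogue of Lemmas~\ref{lemmabdrygap} and~\ref{bdry_surgery_prop}. For $i\neq i_0$ there is nothing to prove. For $N_{i_0}$ the induced surgery is a codimension $\ge 2$ surgery in the interior of the manifold-with-boundary $N_{i_0}$, away from $\partial N_{i_0}=X_{j(i_0)}$; since $X_{j(i_0)}$ is untouched, $D^{\partial^2 g}$ on it is invertible, so Proposition~\ref{propbdrygap} (the manifold-with-boundary version) keeps the essential spectrum of $D$ on $(N_{i_0})_\infty$ away from $0$ uniformly, and a hypothetical zero eigenspinor is excluded by the same $C^2$-estimate / Ascoli / removal-of-singularities / a priori-estimate scheme one dimension down. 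With these gaps in hand, Proposition~\ref{propbdrygap_corners}(1) upgrades invertibility on corners and boundary pieces to invertibility of $D^{g''_\rho}$ on $M''_\infty$, and the theorem follows by choosing $g''=g''_{\rho_0}$ with $\rho_0$ small, exactly as for Theorem~\ref{thm_handle_attachment}.

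The main obstacle is not a new geometric phenomenon but the bookkeeping of this hierarchy of spectral gaps, corner $\Rightarrow$ boundary pieces $\Rightarrow$ $M$, and the verification that every constant that enters — in Lemma~\ref{C^2-estimate}, in Lemma~\ref{removal_singularities}, in the fundamental estimates, and above all in the exponential-decay bound of Proposition~\ref{propbdrygap_corners}(3) — can be chosen independently of the surgery parameters $\delta$ and $\rho$, so that concentration of the harmonic spinors at infinity along the ordinary and the corner cylindrical ends is ruled out uniformly. Once this is arranged, the argument is a routine transcription of Section~3.
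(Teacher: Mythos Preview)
Your proposal is correct and follows essentially the same strategy as the paper's proof: rerun the four steps of Theorem~\ref{thm_handle_attachment} with Proposition~\ref{propbdrygap_corners} replacing Proposition~\ref{propbdrygap}, adapt Lemma~\ref{removal_singularities} to the corner setting, and supply the boundary-piece spectral gaps (the corner analogues of Lemmas~\ref{lemmabdrygap} and~\ref{bdry_surgery_prop}) by running the same scheme one dimension down on the manifold-with-boundary $N_{i_0}$. The paper packages this last point by observing that the required corner analogue of Lemma~\ref{bdry_surgery_prop} ``is exactly the result of Theorem~\ref{thm_handle_attachment}'' applied to the boundary piece, which is precisely your ``one dimension down'' argument.
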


\begin{proof} 
In principle the proof follows the proof of 
Theorem~\ref{thm_handle_attachment} since the handle attachment is done 
outside a neighbourhood of the corners. The steps explained in the 
strategy \ref{strategy} remain the same. But one has to make sure that 
all the auxiliary lemmas can be adapted to the new situation. Next, we 
will describe the required changes in those lemmas and in the proof.

Step 1: In Lemma \ref{lemmabdrygap} the boundary 
$(\partial M, \partial g)$ will now be itself a manifold with boundary. 
Thus, the statement is then just Proposition~\ref{propbdrygap}. The proof 
of Proposition~\ref{first_approx_prop} is done for corners analogously as 
before. But we now use Proposition~\ref{propbdrygap_corners} instead of 
Proposition~\ref{propbdrygap}. Moreover, the formulation of the 
Lemma~\ref{removal_singularities} for the removal of singularities has 
to be adapted to manifolds with corners. But its proof is exactly the 
same provided that $S$ is placed outside a neighbourhood of the corners.

Step 2 and 3 can be done in the same way using 
Proposition~\ref{propbdrygap_corners}. 

Step 4: The auxiliary Lemma \ref{bdry_surgery_prop} is now needed 
for manifolds with boundary which is exactly the result of 
Theorem~\ref{thm_handle_attachment}. The rest of this step is done 
analogously to the adaptations discussed before.
\end{proof}

%%%%%%%%%%%%%%%%%%%%%%%%%%%%%%%%%%%%%%%%%%%%%%%%%%%%%%%%%%%%%%%%%
\subsection{The $R^{\rm inv}_n$ groups and statement of the Theorem}
\label{Rngroups}
%%%%%%%%%%%%%%%%%%%%%%%%%%%%%%%%%%%%%%%%%%%%%%%%%%%%%%%%%%%%%%%%%

Following \cite[Definition 4.1]{stolz_xx} we define 
\[
R^{\rm inv}_n \definedas
\{
(M,h) \mid M \text{ is a spin $n$-manifold}, h \in \Rinv(\partial M)
\} / \sim,
\]
where $\partial M$ and $M$ are allowed to be empty, and $M$ is not
required to be connected. The equivalence relation $\sim$ is defined
by $(M_0, h_0) \sim (M_1, h_1)$ if 
\begin{itemize}
\item 
there is a spin manifold $V$ with 
$\partial V = \partial M_0 \sqcup \partial M_1$
and a metric $H \in \Rinv(V)$ such that 
$H|_{\partial V} = h_0 \sqcup h_1$,
\item 
there is a spin manifold $W$ with boundary 
$M_0 \cup_{\partial M_0} V \cup_{\partial M_1} M_1$,
\item 
the orientations and spin structures on all manifolds involved are
compatible in the obvious ways.
\end{itemize}
This is illustrated in Figure~\ref{figure_CONC_Rn_EQUIVALENCE}. 
The equivalence class of $(M, h)$ is denoted by $[M,h]$. 
\begin{figure}[h]
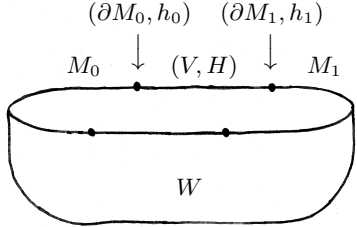

\begin{lpic}[]{FIGS/CONC_Rn_EQUIVALENCE(0.2)}
\lbl[]{80,150;{\small $M_0$}}
\lbl[]{240,150;{\small $M_1$}}
\lbl[]{160,150;{\small $(V,H)$}}
\lbl[]{150,70;{\small $W$}}
\lbl[]{117,160,90;{\small $\longleftarrow$}}
\lbl[]{117,185;{\small $(\partial M_0, h_0)$}}
\lbl[]{205,160,90;{\small $\longleftarrow$}}
\lbl[]{205,185;{\small $(\partial M_1, h_1)$}}
\end{lpic}
\caption{The equivalence relation in $R^{\rm inv}_n$.}
\label{figure_CONC_Rn_EQUIVALENCE}
\end{figure}

Note that in contrast to the concordance theory of positive scalar 
curvature metrics the definition of the $R^{\rm inv}_n$ groups does not 
involve a fixed fundamental group. The reason is that the Surgery 
Theorem \ref{thm_handle_attachment} also allows handle attachment 
of codimension $2$. Thus, any manifold and the manifold obtained by 
killing the fundamental group via codimension $2$ surgeries represent 
the same element in $R^{\rm inv}_n$.

The set $R^{\rm inv}_n$ is an abelian group when addition is defined as
disjoint union and the zero element is given by the equivalence class
of the empty manifold. The groups $R^{\rm inv}_n$ are defined to fit in
the sequence of abelian groups
\[
\ldots \to
R^{\rm inv}_{n+1}
\overset{\partial}{\to} 
\Oinv_n
\overset{i}{\to}
\Ospin_n
\overset{j}{\to}
R^{\rm inv}_n
\to \ldots
\]
where the maps are defined by 
$\partial([M,g]) \definedas [\partial M,g]$, $i([M,g]) \definedas [M]$, 
and $j([M]) \definedas [M,-]$. It is not complicated to see that 
this sequence is exact at $\Oinv_n$ and at $R^{\rm inv}_n$. Exactness at 
$\Ospin_n$ follows from the first point of the Corollary below.

Our main theorem follows \cite[Theorem~5.4]{stolz_xx}.

\begin{thm} \label{thm_exist_class}
Let $M$ be a connected spin manifold of dimension $n \geq 4$.
\begin{enumerate}
\item
$\Rinv(M \rel h)$ is nonempty if and only if $[M,h] \in R^{\rm inv}_n$ 
vanishes.
\item
If $\Rinv(M \rel h)$ is nonempty then $R^{\rm inv}_{n+1}$ acts freely and
transitively on $\conc \Rinv(M \rel h)$.
\end{enumerate}
\end{thm}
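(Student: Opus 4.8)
The plan is to follow Stolz's treatment of the positive scalar curvature case \cite{stolz_xx}, replacing psc metrics everywhere by metrics with invertible Dirac operator and the Gromov--Lawson--Carr surgery by Theorem~\ref{thm_handle_attachment} together with its corner version Theorem~\ref{thm_handle_attach_corners}; the reason no fixed fundamental group appears in $R^{\rm inv}_n$ (cf.\ the remark after its definition) is that these theorems permit handle attachments of codimension $2$. For Part~(1) the ``only if'' direction is immediate: given $g\in\Rinv(M\rel h)$, the data $V=M$, $H=g$, $W=M\times[0,1]$ (with its codimension-$2$ corner rounded, so $\partial W\cong M\cup_{\partial M}M$) exhibits $(M,h)\sim(\emptyset,\emptyset)$, hence $[M,h]=0$. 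For the ``if'' direction one starts from a nullbordism: a spin manifold $V$ with $\partial V=\partial M$, a metric $H\in\Rinv(V)$ with $H|_{\partial V}=h$, and a spin manifold $W$ (which we may take connected) with $\partial W=M\cup_{\partial M}V$; introducing a corner along the separating hypersurface $\partial M\subset\partial W$ makes $W$ a bordism $W'$ from $V$ to $M$ of manifolds with boundary. First I would do surgery on embedded circles in the interior of $W'$ to arrange that $\pi_1(M)\to\pi_1(W')$ is surjective (always possible, and it does not change $\partial W'$); standard handle trading then yields a handle decomposition of $W'$ relative to $M$ with no handles of index $0$ or $1$ (here connectedness of $M$ rules out the index-$0$ handles), equivalently a handle decomposition built on $V\times[0,1]$ all of whose handles have index $\le n-1$, with attaching maps isotoped into the interiors of the intermediate manifolds. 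A handle of index $\lambda\le n-1$ corresponds to a surgery of codimension $n-\lambda+1\ge 2$ away from the boundary, so Theorem~\ref{thm_handle_attachment} carries the metric across it while leaving the boundary metric equal to $h$; after the last handle one reaches the desired $g\in\Rinv(M\rel h)$. The hypothesis $n\ge 4$ enters exactly here, since the handle manipulations take place in $W'$ of dimension $n+1\ge 5$.

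For Part~(2) I would fix a basepoint $g_0\in\Rinv(M\rel h)$ and define a difference map
\[
\Phi_{g_0}\colon\conc\Rinv(M\rel h)\longrightarrow R^{\rm inv}_{n+1},
\qquad
\Phi_{g_0}([g])\definedas[\,M\times[0,1],\ g\ast_h g_0\,],
\]
where $g\ast_h g_0$ is the metric on $\partial(M\times[0,1])\cong M\cup_{\partial M}M$ obtained by gluing $(M,g)$ to $(M,g_0)$ along a sufficiently long cylinder over $(\partial M,h)$; this glued metric is invertible by the long-neck principle (\cite[Proposition~2.1]{dahl_08}, or Proposition~\ref{propbdrygap_corners}). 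One checks that $\Phi_{g_0}$ depends only on the concordance class (a concordance together with a long neck furnishes the required $\Rinv$-bordism of pairs), that $\Phi_{g_0}([g_2])-\Phi_{g_0}([g_1])=[\,M\times[0,1],\ g_2\ast_h g_1\,]$, and that this difference is additive under gluing two copies of $M\times[0,1]$ along a face. The resulting cocycle identity implies that $\Phi_{g_0}$ is bijective if and only if $\Phi_{g_1}$ is for any other basepoint, and that a bijective $\Phi_{g_0}$ determines a free and transitive action of $R^{\rm inv}_{n+1}$ by $x\cdot[g]\definedas\Phi_g^{-1}(x)$; so the theorem reduces to the bijectivity of $\Phi_{g_0}$.

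Injectivity of $\Phi_{g_0}$ (freeness) follows from the Part~(1) argument one dimension higher. If $[\,M\times[0,1],\ g\ast_h g_0\,]=0$ in $R^{\rm inv}_{n+1}$, then transporting the $\Rinv$-metric of the nullbordism over the relevant bordism of manifolds with (rounded) corners, via Theorem~\ref{thm_handle_attach_corners}, produces $\bar g\in\Rinv(M\times[0,1])$ with $\partial\bar g=g\ast_h g_0$, and such a $\bar g$ restricts to a concordance from $g$ to $g_0$ (up to an inessential insertion of collars), so $[g]=[g_0]$. Surjectivity (transitivity) is the ``track of a bordism'' construction: given $x=[Z,z]\in R^{\rm inv}_{n+1}$ with $Z$ connected, assemble from $M\times[0,1]$ and a representative of $x$ a bordism $W$ from $(M,g_0)$ to $M$ relative to $\partial M$ whose associated $R^{\rm inv}_{n+1}$-class is $x$, propagate $g_0$ over $W$ using Theorem~\ref{thm_handle_attach_corners} to obtain a metric $g_1\in\Rinv(M\rel h)$, and verify $\Phi_{g_0}([g_1])=x$.

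The main obstacle is, in both parts, the reduction to a handle decomposition all of whose handles have ``good'' index: arranging $\pi_1$-surjectivity and cancelling the extreme handles has to be carried out compatibly with the corner structure (bordisms of manifolds with boundary are naturally manifolds with corners, and concordances already live on corner manifolds, so one must round corners wherever the codimension-$2$ theory of Section~\ref{subsec_manifolds_with_corners} does not apply directly), and every intermediate metric must stay in $\Rinv$ --- which is precisely why the codimension-$2$ handle theorem of this paper, with its uniform spectral control, is needed rather than a codimension-$3$ statement, and why no fundamental-group hypothesis is required. The second delicate point is the spectral bookkeeping behind the torsor structure: checking that long-neck gluing produces invertible Dirac operators and that the difference and additivity identities hold at the level of $R^{\rm inv}_{n+1}$-classes, for which Proposition~\ref{propbdrygap_corners} supplies the needed exponential decay and invertibility on manifolds with corners.
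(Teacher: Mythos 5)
Your proposal follows the paper's Stolz-style strategy for both parts, and most of the steps line up: for Part~(1) your "if" direction is precisely what the paper packages as the Extension Theorem (Theorem~\ref{thm_extension}) — introduce a corner, make $W$ connected and $\pi_1(M)\to\pi_1(W)$ onto by interior surgeries, then take a handle decomposition relative to $M$ with no $0$- or $1$-handles and push the metric across the dual handles of index $\le n-1$, i.e.\ codimension $\ge 2$, via Theorem~\ref{thm_handle_attach_corners}. Your accounting of where $n\ge 4$ enters is right. For Part~(2) your pairing $\Phi_{g_0}$ is the paper's $i_{[g_0]}$ up to an irrelevant sign, the cocycle identity and the concordance-invariance are the paper's Lemmas~\ref{lemma_i_conc} and \ref{lemma_i_add}, and your injectivity argument agrees with the paper's (reduce to $i([g_1],[g_2])=0$, apply Part~(1) one dimension up, reintroduce corners to read off a concordance).

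The surjectivity step, however, has a genuine gap. You write "given $x=[Z,z]\in R^{\rm inv}_{n+1}$ with $Z$ connected, assemble from $M\times[0,1]$ and a representative of $x$ a bordism $W$ from $(M,g_0)$ to $M$ relative to $\partial M$ whose associated $R^{\rm inv}_{n+1}$-class is $x$." For an arbitrary connected representative $Z$ this cannot be done: $\partial Z$ is an arbitrary closed $n$-manifold carrying $z\in\Rinv(\partial Z)$, and there is no boundary component of $M\times[0,1]$ (or of a ball removed from its interior) along which $Z$ can be glued. The paper supplies exactly the missing ingredient as Proposition~\ref{prop_B^n_repr}: for $n+1\geq 5$ every class in $R^{\rm inv}_{n+1}$ has a representative of the form $(B^{n+1},q)$ with $q\in\Rinv(S^n)$, and $[B^{n+1},\cdot]$ induces an isomorphism $\conc\Rinv(S^n)\to R^{\rm inv}_{n+1}$. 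Granted this, the assembly becomes concrete and matches the paper: delete an open ball from the interior of $M\times I$ to form $C$, observe that $M\times\{1\}\hookrightarrow C$ is still a $1$-equivalence, apply the Extension Theorem to extend $g_0\cup q$ over $C$, set $g_1=G|_{M\times\{1\}}$, and invoke Lemma~\ref{lemma_extension} to identify $i([g_0],[g_1])=[B^{n+1},q]=x$. The $B^{n+1}$-reduction is itself proved using the Extension Theorem in dimension $n+1$ (embed a disk in the interior of a simply-connected representative and extend the boundary metric over the complement), so it is not an additional deep input — but it is a step whose absence leaves the "assembly" undefined, and without it the transitivity of the $R^{\rm inv}_{n+1}$-action is not established.
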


For closed manifolds we get the following Corollary as a special case.

\begin{corollary} 
Let $M$ be a closed connected spin manifold of dimension $n\geq 4$.
\begin{enumerate}
\item
$\Rinv(M)$ is nonempty if and only if $[M,-]$ is zero in $R^{\rm inv}_n$. 
\item
If $\Rinv(M)$ is nonempty then $R^{\rm inv}_{n+1}$ acts freely and
transitively on $\conc \Rinv(M)$.
\end{enumerate}
\end{corollary}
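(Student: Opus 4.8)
The plan is to deduce the Corollary directly from Theorem~\ref{thm_exist_class} by specializing to empty boundary. First I would observe that for a closed manifold $M$ the boundary $\partial M$ is empty, so the only Riemannian metric on $\partial M$ is the empty metric, which vacuously lies in $\Rinv(\partial M)$; call it $h$. With this choice one has ${\mathcal R}(M \rel h) = {\mathcal R}(M)$ and hence $\Rinv(M \rel h) = \Rinv(M)$, and by the definition of $R^{\rm inv}_n$ the class $[M,h]$ is exactly $[M,-] = j([M])$.

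For part (1) I would then simply quote Theorem~\ref{thm_exist_class}(1) with this $h$: the set $\Rinv(M \rel h) = \Rinv(M)$ is nonempty if and only if $[M,h] = [M,-]$ vanishes in $R^{\rm inv}_n$. For part (2) I would likewise invoke Theorem~\ref{thm_exist_class}(2), after identifying $\conc \Rinv(M \rel h)$ with $\conc \Rinv(M)$: since $M$ is closed, the cylinder $[0,1]\times M$ is an honest manifold with boundary and no corners, so the relative concordance relation of Section~\ref{section_concordance} restricted to empty boundary is precisely the concordance relation of Definition~\ref{def_concordance}(1). Hence the free and transitive action of $R^{\rm inv}_{n+1}$ on the former is a free and transitive action on the latter.

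The main (and essentially only) point that needs care is this identification of concordance notions: one has to check that the bordism-of-metrics-on-manifolds-with-corners machinery underlying the $R^{\rm inv}_{n+1}$-action in Theorem~\ref{thm_exist_class} collapses, in the case $\partial M = \emptyset$, to the elementary closed-manifold concordance of Definition~\ref{def_concordance}, with no stray corner contributions and with the same equivalence classes. Granting that, the Corollary is an immediate specialization of Theorem~\ref{thm_exist_class} and there is no further obstacle.
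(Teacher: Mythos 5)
Your proposal is correct and matches the paper's own treatment: the paper states the Corollary as an immediate specialization of Theorem~\ref{thm_exist_class} to the case $\partial M = \emptyset$, which is exactly what you do. Your extra care in checking that the relative concordance relation collapses to the closed-manifold one when the boundary is empty is sound and simply makes explicit what the paper leaves implicit.
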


%%%%%%%%%%%%%%%%%%%%%%%%%%%%%%%%%%%%%%%%%%%%%%%%%%%%%%%%%%%%%%%%%
\subsection{Proof of the Theorem}
%%%%%%%%%%%%%%%%%%%%%%%%%%%%%%%%%%%%%%%%%%%%%%%%%%%%%%%%%%%%%%%%%

The next Lemma is similar to \cite[Lemma~4.3]{stolz_xx}.

\begin{lemma} \label{lemma_extension}
Let $M$ be a manifold of dimension $n$, and let 
$h \in \Rinv(\partial M)$. Suppose $C$ is obtained from $M$ by 
removing the interior of a compact codimension $0$ submanifold 
$N \subset M$. Assume $h$ can be extended to a metric $H$ on $C$ 
with invertible Dirac operator (and product near the boundary as 
usual). Then $(M,h) \sim (N,H|_{\partial N})$, so they define the 
same element in $R^{\rm inv}_n$.
\end{lemma}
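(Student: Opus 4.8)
I want to prove $(M,h)\sim (N,H|_{\partial N})$ by exhibiting the data required by the equivalence relation in $R^{\rm inv}_n$: a spin cobordism $V$ between $\partial M$ and $\partial N$ carrying a metric in $\Rinv(V)$ restricting to $h\sqcup H|_{\partial N}$, and then a spin manifold $W$ whose boundary is $M\cup_{\partial M}V\cup_{\partial N}N$. First I would observe that $C=M\setminus \mathrm{int}(N)$ is itself a cobordism from $\partial M$ to $\partial N$: indeed $\partial C=\partial M\sqcup \partial N$ (with orientations arranged so that $M=N\cup_{\partial N}C$ as spin manifolds). So the natural candidate is $V\definedas C$, equipped with the metric $H$, which by hypothesis lies in $\Rinv(C)$ and satisfies $H|_{\partial C}=h\sqcup (H|_{\partial N})$ — exactly the first bullet of the equivalence relation.

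Next I would produce the manifold $W$ witnessing the bullet that $M_0\cup_{\partial M_0}V\cup_{\partial M_1}M_1$ bounds. Here $M_0=(M,h)$, $M_1=(N,H|_{\partial N})$, and $V=C$, so the relevant closed manifold is $M\cup_{\partial M}C\cup_{\partial N}N$. But $M\cup_{\partial M}C$ is the double of $M$ along its boundary — or more to the point, gluing $C$ back to $N$ along $\partial N$ simply reconstructs $M$, so $M\cup_{\partial M}C\cup_{\partial N}N$ is the double $M\cup_{\partial M}\overline{M}$ (two copies of $M$ glued along $\partial M$, with the gluing via $C$ rather than the identity collar — these are diffeomorphic). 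The double of any compact spin manifold with boundary bounds: it is the boundary of $M\times[0,1]$. So I would take $W\definedas M\times[0,1]$, whose boundary is the double of $M$, and identify that double with $M\cup_{\partial M}C\cup_{\partial N}N$ by the diffeomorphism just described. Orientations and spin structures match because all constructions (doubling, gluing along collars) are canonical once the spin structure on $M$ is fixed.

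**Main obstacle.** The only subtlety — and the step I expect to require the most care — is bookkeeping with orientations and spin structures so that the three bullets of the $R^{\rm inv}_n$-equivalence are satisfied literally, including the requirement that the identifications along $\partial M$ and $\partial N$ are compatible with the chosen spin structures. Concretely I need: (i) the decomposition $M=N\cup_{\partial N}C$ to respect spin structures, which holds since $C$ and $N$ inherit theirs from $M$; (ii) $\partial C=\partial M\sqcup\partial N$ to carry $h\sqcup(H|_{\partial N})$ with the correct (reversed on one side) orientations, which is forced by the convention in Definition~\ref{def_concordance} and the $R^{\rm inv}_n$ definition; and (iii) the diffeomorphism $M\cup_{\partial M}C\cup_{\partial N}N\cong \partial(M\times[0,1])$ to be spin-structure preserving, which is standard for the double. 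None of this is deep, but it must be stated cleanly. Everything else — the existence of a metric in $\Rinv$ on the pieces — is handed to us by the hypothesis on $H$, so no analytic work (no use of Theorem~\ref{thm_handle_attachment} or the spectral lemmas) is needed here.
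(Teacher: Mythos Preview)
Your approach is essentially the same as the paper's: take $V$ to be $C$ (the paper attaches an extra cylinder $(\partial M\times I,\,h+dt^2)$ to $C$, which is harmless since it does not change $\Rinv$ and makes the boundary identification below more transparent) and take $W$ to be $M\times I$. The one point you should tighten is that $M\times[0,1]$ is a manifold with corners, not with boundary; the paper uses the corner-rounded version $\widetilde{M\times I}$, whose boundary is $(-M)\cup(\partial M\times I)\cup M$, and the extra cylinder in $V$ is exactly what makes this match $M_0\cup_{\partial M_0}V\cup_{\partial M_1}M_1$ on the nose rather than up to a collar-absorbing diffeomorphism.
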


\begin{proof}
For the equivalence of $(M,h)$ and $(N,H|_{\partial N})$ in $R^{\rm inv}_n$
the connecting part $V$ consists of $(C,H)$ with the cylinder 
$(\partial M \times I, h + dt^2)$ attached at $\partial M$, which 
has invertible Dirac operator by assumption. The role of $W$ is 
played by $\widetilde{M \times I}$ which is the product manifold 
$M \times I$ with corners rounded, see 
Figure~\ref{figure_CONC_LEMMA_EXTENSION}. 
\begin{figure}[h]
\begin{lpic}[]{FIGS/CONC_LEMMA_EXTENSION(0.25)}
\lbl[]{60,145;{\small $N$}}
\lbl[]{110,145;{\small $C$}}
\lbl[]{170,145;{\small $\partial M \times I$}}
\lbl[]{235,145;{\small $M$}}
\lbl[]{140,70;{\small $\widetilde{M \times I}$}}
\end{lpic}
\caption{}
\label{figure_CONC_LEMMA_EXTENSION}
\end{figure}
\end{proof}

The following Corollary is immediate.

\begin{cor} \label{cor_extension}
If $h$ extends to a metric with invertible Dirac operator on all of
$M$, then $[M,h] = 0$ in $R^{\rm inv}_n$.
\end{cor}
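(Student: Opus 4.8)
The plan is to obtain this directly from Lemma~\ref{lemma_extension} by taking the removed submanifold to be empty. Apply that lemma with $N = \emptyset$; the empty set is vacuously a compact codimension-$0$ submanifold of $M$, so $C = M$, and the hypothesis that $h$ extends to a metric $H$ with invertible Dirac operator on $C = M$ is precisely the hypothesis of the present Corollary. The conclusion of the lemma is then that $(M,h) \sim (\emptyset, H|_{\partial\emptyset}) = (\emptyset, \emptyset)$, and $(\emptyset, \emptyset)$ represents the zero element of $R^{\rm inv}_n$ by the definition of the group structure (disjoint union as addition, the empty manifold as the zero class). Hence $[M,h] = 0$ in $R^{\rm inv}_n$.

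If one prefers to see the equivalence unwound rather than invoking the lemma: the connecting manifold $V$ in the definition of $\sim$ is taken to be $(M,H)$ itself (possibly with a collar cylinder $\partial M \times I$ attached at $\partial M$), which lies in $\Rinv(M)$ by hypothesis, has $\partial V = \partial M = \partial M \sqcup \partial(\emptyset)$, and restricts to $h = h \sqcup \emptyset$ on its boundary. The manifold $W$ witnessing the relation is $\widetilde{M \times I}$, the product $M \times I$ with its corners rounded as in Section~\ref{subsec_manifolds_with_corners}, whose boundary is $M \cup_{\partial M} V \cup \emptyset$, i.e.\ the double of $M$ with the cylinder-like piece $V$ inserted. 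All orientations and spin structures match in the obvious way, so $(M,h) \sim (\emptyset, \emptyset)$.

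There is no genuine obstacle here: the statement is a formal consequence of Lemma~\ref{lemma_extension}. The only two points worth a word are that $N = \emptyset$ is a legitimate choice in that lemma, and that the empty manifold represents $0$ in $R^{\rm inv}_n$ — both immediate from the definitions — so the proof is a single line.
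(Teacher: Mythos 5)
Your proof is correct and is exactly what the paper has in mind: it states the corollary as ``immediate'' from Lemma~\ref{lemma_extension}, and the specialization $N = \emptyset$, $C = M$ you describe is the intended reading. The verification that $[\emptyset,-]$ is the zero class and that $(V,W) = (M \cup_{\partial M} (\partial M \times I), \widetilde{M \times I})$ witnesses the equivalence matches the proof of the lemma itself.
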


The following ``Extension Theorem'' is similar to
\cite[Theorem~5.5]{stolz_xx}.

\begin{thm} \label{thm_extension}
Let $M$, $V$ be spin manifolds of dimension $n \geq 4$ with boundary. 
Assume that $\partial M = \partial V$ and that there is a spin bordism 
from $V$ to $M$ for which the boundary bordism is a product 
$\partial M \times I$. If the inclusion 
$M \hookrightarrow W$ is a $1$-equivalence, then a metric 
$g \in \Rinv(V \rel h)$ can be extended to a metric $G$ on $W$ with 
invertible Dirac operator such that $G = h + dt^2$ on the boundary 
bordism.
\end{thm}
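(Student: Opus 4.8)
The plan is to present $W$ as the trace of a finite sequence of codimension-$\ge2$ handle attachments performed on $V\times[0,1]$, all supported away from the corner, and then to push the metric $g+dt^2$ through these handle attachments using the manifold-with-corners surgery result Theorem~\ref{thm_handle_attach_corners}.

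First I would equip $V\times[0,1]$, viewed as a manifold with corner $\partial V\times\{0,1\}$, with the product metric $g+dt^2$; along the vertical boundary $\partial V\times[0,1]$ this metric is $h+dt^2$, and near the corner it has the required double product form. Since $(V\times[0,1])_\infty\cong V_\infty\times\mR$ and $D^g$ is invertible on $V_\infty$, the product Dirac operator is invertible; alternatively this follows from Proposition~\ref{propbdrygap_corners}(1) applied to the decomposition of $\partial(V\times[0,1])$ into the two copies of $(V,g)$ and the piece $(\partial V\times[0,1],h+dt^2)$, using $h\in\Rinv(\partial V)$. Thus $g+dt^2\in\Rinv(V\times[0,1])$.

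Next comes the topological input. Regard $W$ as a bordism from $M$ to $V$ and choose a handle decomposition built on $M$; since the boundary bordism is the product $\partial M\times I$, the Morse function can be taken to have no critical points near the vertical boundary, so all handles lie in the interior. Because $M\hookrightarrow W$ is a $1$-equivalence the pair $(W,M)$ is $1$-connected, and since $\dim W=n+1\ge5$ the standard handle-trading arguments of surgery theory (as in \cite[proof of Theorem~5.5]{stolz_xx}) let us cancel all handles of index $\le1$. Reading the same decomposition upside-down, as built on $V$, a handle of index $j$ becomes one of index $(n+1)-j\le n-1$: hence $W$ is obtained from $V\times[0,1]$ by attaching, along the interior of $V\times\{1\}$, handles whose attaching spheres $S^{\ell-1}$ have codimension $n-\ell+1\ge2$ in the $n$-manifold $V\times\{1\}$, and which are supported away from a neighbourhood of the corner. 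Then, applying Theorem~\ref{thm_handle_attach_corners} once for each of these handles, in order, we extend $g+dt^2$ over all the attached handles to a metric $G\in\Rinv(W)$ (gluing the collar $V\times[0,1]$ back on does not change the diffeomorphism type of $W$). At each step the metric is altered only in an arbitrarily small neighbourhood of the surgery sphere, which lies in the interior, so on the boundary bordism $\partial M\times I$ the metric remains $h+dt^2$; the metric induced on the $M$-end is whatever the iterated surgeries produce, but the statement imposes no condition there.

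The one step that is not mechanical is the handle trading: one must arrange a handle decomposition in which every handle has codimension $\ge2$ from the $V$-side, which is exactly where the $1$-equivalence hypothesis enters. In the positive scalar curvature case one needs codimension $\ge3$, hence a $2$-equivalence; here the weaker hypothesis suffices precisely because Theorem~\ref{thm_handle_attachment}, and its corner version Theorem~\ref{thm_handle_attach_corners}, cover the codimension-$2$ case. The analytic part is then just the iteration of Theorem~\ref{thm_handle_attach_corners} together with the elementary product construction and the corner spectral estimates of Proposition~\ref{propbdrygap_corners}.
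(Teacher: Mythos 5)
Your proof is correct and follows essentially the same approach as the paper's: both reduce the statement to the topological fact that $W$ can be built from $V\times I$ by attaching interior handles whose codimension from the $V$-side is at least two (the paper cites Kosinski Chapter~8, Proposition~3.1 for this; you spell out the handle-trading and index-reversal argument), and then iterate Theorem~\ref{thm_handle_attach_corners}. Your added observations — that $g+dt^2\in\Rinv(V\times I)$ via the product structure or Proposition~\ref{propbdrygap_corners}, and the explicit index bookkeeping $j\ge2 \Rightarrow \ell=n+1-j\le n-1 \Rightarrow$ codimension $n-\ell+1\ge2$ — are exactly the details the paper leaves implicit, and the remark contrasting the $1$-equivalence/codimension-$2$ situation with the $2$-equivalence/codimension-$3$ situation for positive scalar curvature matches the paper's discussion in Section~\ref{Rngroups}.
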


\begin{proof}
The bordism $W$ can be built from $V \times I$ by attaching handles of 
codimension $\geq 2$ outside a neighborhood of $\partial V \times I$. 
For closed manifolds this is proved in
\cite[Chapter~8, Proposition~3.1]{kosinski_93}, the argument works also
in our setting. By Theorem~\ref{thm_handle_attach_corners} a metric 
in $\Rinv(V \rel h)$ can be extended over $W$ as required. 
\end{proof}

We prove that every element of $R^{\rm inv}_n$ is represented by the ball 
$B^n$ and a metric on its boundary, this is parallel to 
\cite[Proposition~5.8]{stolz_xx}. We define addition on 
$\conc \Rinv(S^{n-1})$ by taking connected sum of metrics with invertible 
Dirac operator. This makes $\conc \Rinv(S^{n-1})$ into an abelian group. 
Recall that $\conc \Rinv(M)$ denotes the set of concordance classes as 
given in Definition \ref{def_concordance}.

\begin{prop} \label{prop_B^n_repr}
Let $n \geq 5$. For $[M,h] \in R^{\rm inv}_n$ there is a 
$q \in \Rinv(S^{n-1})$ so that $[M,h] = [B^n, q]$. The inclusion of 
elements of the form $[B^n, q]$ into $R^{\rm inv}_n$ induces a group 
isomorphism between $R^{\rm inv}_n$ and $\conc \Rinv(S^{n-1})$.
\end{prop}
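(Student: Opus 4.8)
\emph{Structure of the argument.} The proposition has two parts: (a) every class in $R^{\rm inv}_n$ has the form $[B^n,q]$ for some $q\in\Rinv(S^{n-1})$; (b) the map $\Phi\colon\conc\Rinv(S^{n-1})\to R^{\rm inv}_n$, $\Phi([q])\definedas[B^n,q]$, is a group isomorphism. For (b) one checks that $\Phi$ is well defined on concordance classes and additive, that it is onto (which is exactly (a)), and that it is injective; the last point is where the real work lies. For (a), I would first reduce to $\partial M\neq\emptyset$: if $\partial M=\emptyset$, then since the round metric $\sigma^{n-1}\in\Rinv(S^{n-1})$ bounds the torpedo metric on $B^n$, the pair $(B^n,\text{torpedo})$ together with $W=M\times I$ shows $(M,-)\sim\bigl(M\setminus\mathrm{int}(B^n),\sigma^{n-1}\bigr)$, a pair with nonempty boundary. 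Next I would simplify $M$ by surgeries in its interior: the trace of such a surgery is a spin bordism from $M$ to the surgered manifold whose boundary bordism is the product $\partial M\times I$, so by the definition of $R^{\rm inv}_n$ it leaves $[M,h]$ unchanged; with $0$- and $1$-surgeries ($n\ge5$) I would make $M$ connected and simply connected. Now pick an embedded ball $B^n\subset\mathrm{int}(M)$ and set $C\definedas M\setminus\mathrm{int}(B^n)$, a connected, simply connected spin cobordism from $\partial M$ to $S^{n-1}$. As both boundary inclusions into $C$ are $1$-equivalences, handle trading — here is where $n\ge5$ enters — gives a handle decomposition of $C$ relative to $\partial M\times I$, with handles attached at $\partial M\times\{1\}$ and of index in $\{1,\dots,n-2\}$, hence of codimension $\ge2$. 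Starting from $(\partial M\times I,\,h+dt^2)\in\Rinv$ and attaching these handles one at a time, Theorem~\ref{thm_handle_attachment} produces $H\in\Rinv(C)$ equal to $h+dt^2$ near $\partial M$ and restricting on the other boundary component to a metric $q$ on $S^{n-1}$ with $q\in\Rinv(S^{n-1})$ (otherwise $0$ would lie in the essential spectrum of $D^{C_\infty}$, cf.\ Proposition~\ref{propbdrygap}). Lemma~\ref{lemma_extension} with $N=B^n$ then gives $(M,h)\sim(B^n,q)$, i.e.\ $[M,h]=[B^n,q]$.

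\emph{Part (b): well-definedness, additivity, surjectivity.} If $q_0$ and $q_1$ are concordant via $\overline q\in\Rinv(S^{n-1}\times I)$, then $V=S^{n-1}\times I$ (with $\overline q$) and $W=B^{n+1}$ witness $(B^n,q_0)\sim(B^n,q_1)$, so $\Phi$ is well defined. For additivity, given $q_0,q_1\in\Rinv(S^{n-1})$, attach a $1$-handle (of codimension $n-1\ge2$) at the top of $\bigl((S^{n-1}\sqcup S^{n-1})\times I,\,(q_0\sqcup q_1)+dt^2\bigr)$ using Theorem~\ref{thm_handle_attachment}; the result is an invertible-Dirac metric on the connected-sum cobordism $P$ (an $S^n$ with three disjoint open balls removed), with boundary values $q_0\sqcup q_1$ on two of the boundary spheres and a representative of $q_0\#q_1$ on the third. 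Taking $V=P$ and $W=B^{n+1}$ (capping all three boundary spheres of $P$ yields $S^n$) gives $(B^n\sqcup B^n,\,q_0\sqcup q_1)\sim(B^n,\,q_0\#q_1)$, so $\Phi$ is a homomorphism; surjectivity is Part (a).

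\emph{Part (b): injectivity.} Assume $[B^n,q_0]=[B^n,q_1]$. Unwinding the relation, there is a spin cobordism $V$ from $S^{n-1}$ to $S^{n-1}$ with a metric $H\in\Rinv(V)$ satisfying $H|_{\partial V}=q_0\sqcup q_1$, together with a spin manifold $W_0$ with $\partial W_0=B^n\cup_{q_0}V\cup_{q_1}B^n=:\widehat V$. Discarding closed components of $W_0$ disjoint from $\widehat V$ and performing $0$- and $1$-surgeries in its interior ($\dim W_0=n+1\ge6$), I would make $W_0$ connected and simply connected with $\partial W_0=\widehat V$ still; since gluing the two ends of $V$ to the two ends of a cylinder $S^{n-1}\times I$ produces a closed manifold obtained from $\widehat V$ by a single $0$-surgery, that manifold is spin-nullbordant. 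Guided by this nullbordism, I would perform interior surgeries on $V$ of codimension $\ge2$ — realizable geometrically by the interior version of Theorem~\ref{thm_handle_attachment} (the argument of Theorem~\ref{am_da_hu_09} applied away from the boundary), which keeps the metric in $\Rinv$ and fixes $H|_{\partial V}$ — to turn $V$ into an $h$-cobordism. As $S^{n-1}$ is simply connected and $n\ge5$, this $h$-cobordism is a product, so $(V,H)\cong(S^{n-1}\times I,H)$ is a concordance between $q_0$ and $q_1$, i.e.\ $[q_0]=[q_1]$. Hence $\Phi$ is a bijective homomorphism and $R^{\rm inv}_n\cong\conc\Rinv(S^{n-1})$.

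\emph{Main obstacle.} The analytic input (Theorems~\ref{thm_handle_attachment}, \ref{thm_handle_attach_corners}, \ref{am_da_hu_09}, Lemma~\ref{lemma_extension}, Theorem~\ref{thm_extension}) is already available; the crux is the differential-topology bookkeeping under $n\ge5$: producing, for $C$ and for $V$, handle decompositions in which every handle has codimension $\ge2$ (handle trading, the $h$-/$s$-cobordism theorem, with the customary low-dimensional care when $n=5$), and ensuring that each resulting codimension-$\ge2$ surgery can be performed by Theorem~\ref{thm_handle_attachment} (or Theorem~\ref{am_da_hu_09} in the interior) while controlling the induced boundary metric.
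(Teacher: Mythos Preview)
Your Part (a) is essentially the paper's argument: make $M$ connected and simply connected by interior surgeries, remove a ball, and extend $h$ over $C = M\setminus\operatorname{int}B^n$ by handle attachments of codimension $\ge 2$. The paper packages this last step as a direct application of the Extension Theorem~\ref{thm_extension} (applied to the $1$-equivalence $S^{n-1}\hookrightarrow C$), whose proof is exactly the handle argument you write out. Your treatment of well-definedness and additivity is likewise in line with the paper's brief indications.

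For injectivity you diverge from the paper. The paper simply says that, given the bordism $(V,H)$ from $S^{n-1}$ to $S^{n-1}$ and the spin filling $W$ of $B^n\cup V\cup B^n$, one uses Theorem~\ref{thm_extension} again: one manufactures from $W$ a relative $(n{+}1)$-bordism $W'$ from $V$ to $S^{n-1}\times I$ with product vertical boundary (for instance by carving out of $W$ a thickened arc joining the two discs), arranges $S^{n-1}\times I\hookrightarrow W'$ to be a $1$-equivalence by interior surgery, and extends $H$ over $W'$ via Theorem~\ref{thm_handle_attach_corners}; the restriction to $S^{n-1}\times I$ is then a concordance with boundary values exactly $q_0$ and $q_1$. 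Your route---surger $V$ in its interior to an $h$-cobordism and invoke the $h$-cobordism theorem---is more circuitous and, as written, has a gap: the $h$-cobordism theorem gives a product structure relative to \emph{one} end only, so the resulting diffeomorphism $V''\cong S^{n-1}\times I$ may restrict to a nontrivial $\psi\in\pi_0\operatorname{Diff}(S^{n-1})$ on the other end, yielding a concordance from $q_0$ to $\psi^*q_1$ rather than to $q_1$. This can be repaired---if the interior surgeries are read off a handle decomposition of $W\setminus B^{n+1}$ (with $0$-, $1$-, $n$- and $(n{+}1)$-handles traded away, using $n\ge 5$), they carry $\widehat V$ to the \emph{standard} $S^n$, and then Palais' disc theorem, not the $h$-cobordism theorem, identifies $V''\cong S^{n-1}\times I$ rel \emph{both} boundary components---but you do not address this point. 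The paper's direct use of the Extension Theorem sidesteps the boundary-identification subtlety entirely, since it controls the boundary metric throughout.
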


\begin{proof}
Let $[M,h] \in R^{\rm inv}_n$. By making surgeries in the interior we may 
assume that $M$ is connected and simply connected. Take an embedding of 
$B^n$ in the interior of $M$ and apply the Extension 
Theorem~\ref{thm_extension} to the $1$-equivalence 
$S^n\hookrightarrow W = M \setminus \operatorname{int} B^n$. This gives
a metric which extends $h$ on $\partial M$ to a metric $G$ on $W$. 
From Lemma~\ref{lemma_extension} the first statement then follows with 
$q$ taken as the restriction of $G$ to $S^{n-1}$.

If $[M_0, h_0] = [M_1,h_1]$ in $R^{\rm inv}_n$, then there is a bordism 
$(V,H)$ from $\partial M_0$ to $\partial M_1$ with $H \in \Rinv(V)$ such 
that $H|_{\partial V} = h_0 \sqcup h_1$. From this it is not complicated to 
use Theorem~\ref{thm_extension} to find a concordance between the 
corresponding $q_0, q_1 \in \Rinv(S^{n-1})$. Further, it is easy to see
that the disjoint union $[M_0 \sqcup M_1, h_0 \sqcup h_1]$ corresponds
to $[B^n \sqcup B^n, q_0 \sqcup q_1]$, which in turn is equivalent to 
the pair consisting of $B^n$ and the connected sum metric $q_0 \# q_1$ 
on $S^{n-1}$.
\end{proof}

We are now ready to prove the first part of 
Theorem~\ref{thm_exist_class}.

\begin{proof}[Proof of Theorem~\ref{thm_exist_class}, (1)] 
From Corollary~\ref{cor_extension} we know that $[M,h] = 0$ if $h$
extends to a metric with invertible Dirac operator on all of $M$,
which is one direction of the claim. 

For the other direction, suppose $[M,h] = 0$. This means that $(M,h)$
is equivalent to the empty manifold. By definition of the
equivalence relation we then know that 
\begin{itemize}
\item 
there is a manifold $V$ with $\partial V = \partial M$, and
a metric $H \in \Rinv(V)$ with $H|_{\partial V} = h$, 
\item 
there is a manifold $W$ with boundary $M \cup_{\partial M} V$,
\item 
all manifolds have compatible spin structures,
\end{itemize}
see the left of Figure~\ref{figure_CONC_THM_EXISTENCE}. By performing 
surgeries in the interior we may change $W$ to be connected and simply 
connected. Then, we introduce corners so that $W$ becomes a bordism 
from $V$ to $M$ which is a product vertical bordism of the boundaries, 
see the right of Figure~\ref{figure_CONC_THM_EXISTENCE}.
\begin{figure}[h]
\begin{lpic}[]{FIGS/CONC_THM_EXISTENCE(0.25)}
\lbl[]{70,135;{\small $M$}}
\lbl[]{135,135;{\small $(V,H)$}}
\lbl[]{60,70;{\small $W$}}
\lbl[]{100,145,90;{\small $\longleftarrow$}}
\lbl[]{100,165;{\small $(\partial M, h)$}}
\lbl[]{230,155;{\small $M$}}
\lbl[]{220,90;{\small $W$}}
\lbl[]{215,20;{\small $(V,H)$}}
\lbl[]{325,110;{\small $(\partial M \times I, h + dt^2)$}}
\end{lpic}
\caption{}
\label{figure_CONC_THM_EXISTENCE}
\end{figure}
Since $M$ is connected the inclusion $M \hookrightarrow W$ is a 
$1$-equivalence, and from the Extension Theorem~\ref{thm_extension} 
we conclude that the metric $H$ extends to a metric on $W$ with 
invertible Dirac operator. In particular this metric, when restricted to
$M$, gives an invertible extension of $h$ to $M$. 
\end{proof}

Next we prove the second part of Theorem~\ref{thm_exist_class}. For
this we follow \cite{stolz_xx} and construct a pairing
\[
i: \conc \Rinv (M \rel h) \times \conc \Rinv (M \rel h) 
\to R^{\rm inv}_{n+1}
\]
with the properties
\begin{itemize}
\item
$i([g_0], [g_1]) + i([g_1],[g_2]) = i([g_0],[g_2])$ for 
$[g_0], [g_1], [g_2] \in \conc \Rinv (M \rel h)$, 
\item
For every $[g_0] \in \conc \Rinv (M \rel h)$ the map 
$i_{[g_0]} : \conc \Rinv (M \rel h) \to R^{\rm inv}_{n+1}$ is a bijection,
where $i_{[g_0]}$ is defined by $i_{[g_0]} ([g]) = i([g_0], [g])$. 
\end{itemize}
Using this pairing we define an action of $R^{\rm inv}_{n+1}$ on 
$\conc \Rinv (M \rel h)$ by $x \cdot [g] = i_{[g]}^{-1} (x)$ for 
$x \in R^{\rm inv}_{n+1}$ and $[g] \in \conc \Rinv (M \rel h)$. From the 
first property of $i$ it follows that this defines an action, and from 
the second property it follows that the action is free and transitive.

As a first step we define the pairing on metrics,
\[
i: \Rinv (M \rel h) \times \Rinv (M \rel h) \to R^{\rm inv}_{n+1}.
\]
Let $\widetilde{M \times I}$ be $M \times I$ with the corners 
rounded. Then $\partial (\widetilde{M \times I}) = 
(-M) \cup \partial M \times I \cup M$. Take 
$g_0, g_1 \in \Rinv (M \rel h)$. By stretching the interval 
$I$ we may assume that the metric $g_0 \cup h + dt^2 \cup g_1$ has 
invertible Dirac operator on the closed manifold 
$\partial (\widetilde{M \times I})$, 
see Section~\ref{subsec_manifolds_with_corners}. We define
\[
i(g_0, g_1) \definedas 
[\widetilde{M \times I}, g_0 \cup h + dt^2 \cup g_1]
\in R^{\rm inv}_{n+1}.
\]

\begin{lemma} \label{lemma_i_conc}
If $g_0$ and $g_1$ are concordant then $i(g_0, g_1) = 0$.
\end{lemma}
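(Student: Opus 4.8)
The class in question is $i(g_0,g_1) = [\,\widetilde{M\times I},\, q\,] \in R^{\rm inv}_{n+1}$, where $q\definedas g_0 \cup (h+dt^2) \cup g_1$ is the given metric on $\partial(\widetilde{M\times I}) = (-M)\cup (\partial M\times I)\cup M$. The plan is to apply Corollary~\ref{cor_extension}: it suffices to extend $q$ to a metric with invertible Dirac operator on all of $\widetilde{M\times I}$. The key observation is that a concordance between $g_0$ and $g_1$ \emph{is} essentially such an extension, once its corners have been rounded.

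In more detail, a concordance in $\Rinv(M\rel h)$ is a metric $\bar g\in\Rinv(M\times I)$ on the manifold with corners $M\times I$ (the corner being $\partial M\times\{0,1\}$), with $\bar g|_{M\times\{i\}} = g_i$ for $i=0,1$ and $\bar g|_{\partial M\times I} = h+ds^2$. First I would stretch $\bar g$ by inserting long cylinders $\partial M\times[0,L]$ along $\partial M\times I$ near the two corners; this leaves the ``lid'' metrics $g_0,g_1$ untouched and, as recalled in Section~\ref{subsec_manifolds_with_corners} (compare \cite[Proposition~2.1]{dahl_08}), preserves invertibility of the Dirac operator. Then I would round the corner $\partial M\times\{0,1\}$ by the procedure described in Section~\ref{subsec_manifolds_with_corners}: because the corner $(\partial M\times\{0,1\},h)$ has invertible Dirac operator and --- after the stretching --- the relevant pieces are sufficiently far apart, the cut-off argument there produces a metric $\tilde{\bar g}$ on the rounded manifold $\widetilde{M\times I}$ with invertible Dirac operator. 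The spectral bookkeeping is governed by Proposition~\ref{propbdrygap_corners}, in particular by the exponential decay of harmonic spinors away from a large compact set.

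It then remains to identify $\tilde{\bar g}|_{\partial(\widetilde{M\times I})}$ with $q$. Away from the rounded region the boundary of $\widetilde{M\times I}$ carries literally $g_0$ on $-M$, $h+ds^2$ on the collar $\partial M\times I$, and $g_1$ on $M$; near the rounded corner the induced metric has the form $h+dt^2$ for a suitable parameter $t$ --- this is exactly what the product-near-the-edges condition on the triangular metric $\tau$ guarantees --- which is precisely the local form of $q$ at the two joints. Hence, choosing the rounding and the collar lengths appropriately, $\tilde{\bar g}|_{\partial(\widetilde{M\times I})}$ equals $q$ up to isotopy, hence up to concordance, and a concordance of the boundary metric does not change the class in $R^{\rm inv}_{n+1}$ (by the argument underlying Lemma~\ref{lemma_extension}). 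Thus $q$ extends over $\widetilde{M\times I}$ with invertible Dirac operator, and Corollary~\ref{cor_extension} gives $i(g_0,g_1)=0$.

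The main obstacle is the interface between corner-rounding and spectral theory: one must stretch the concordance enough that the gradient of the cut-off function used in the rounding is small enough to exclude a zero eigenvalue on the rounded manifold, which relies on the uniform spectral gap and exponential decay supplied by Proposition~\ref{propbdrygap_corners}; and one must verify that the rounded boundary metric agrees with the prescribed $q$ up to concordance. The remaining points --- that $\widetilde{M\times I}$ is exactly the manifold appearing in the definition of $i$, and that all orientations and spin structures match --- are routine.
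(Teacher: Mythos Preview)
Your proposal is correct and follows essentially the same route as the paper: a concordance is a metric in $\Rinv(M\times I)$, rounding its corners via the procedure of Section~\ref{subsec_manifolds_with_corners} yields a metric in $\Rinv(\widetilde{M\times I})$ with boundary $g_0\cup(h+dt^2)\cup g_1$, and Corollary~\ref{cor_extension} finishes. Your hedge about the rounded boundary metric agreeing with $q$ only ``up to isotopy'' is unnecessary --- since $\tau$ is a product near the edges of $T$, the rounded boundary literally carries $h+dt^2$ there (only the length of $I$ changes, which the definition of $i$ already absorbs), so no extra concordance argument is needed.
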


\begin{proof}
The fact that $g_0$ and $g_1$ are concordant means that the metrics
extend to a metric with invertible Dirac operator on $M \times
I$. By the discussion in Section~\ref{subsec_manifolds_with_corners}
we get a metric with invertible Dirac operator on 
$\widetilde{M \times I}$ which has $g_0 \cup h + dt^2 \cup g_1$ as 
boundary. From Corollary~\ref{cor_extension} we get that $i(g_0, g_1) 
= [\widetilde{M \times I}, g_0 \cup h + dt^2 \cup g_1] = 0$. 
\end{proof}

In particular, $i(g_0,g_0)=0$. Let $\sigma$ be a metric on a 
two-dimensional hexagonal domain $S$ which is a product near the 
boundary lines and a double product near the corners.

\begin{lemma} \label{lemma_i_add}
For $g_0, g_1, g_2 \in \Rinv (M \rel h)$ we have
\[
i(g_0, g_1) + i(g_1, g_2) = i(g_0, g_2).
\]
\end{lemma}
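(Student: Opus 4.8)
The approach is the standard ``pair of pants'' argument of concordance theory (as in Stolz), carried out over $M\times S$ with $S$ the hexagonal domain carrying the metric $\sigma$. First I would reduce to a cyclically symmetric statement: reversing the $I$--coordinate gives an orientation--reversing diffeomorphism of $\widetilde{M\times I}$ that interchanges the two copies of $M$ in its boundary, so $-i(g_0,g_2)=i(g_2,g_0)$ in $R^{\rm inv}_{n+1}$ (the additive inverse of $[M,h]$ is $[-M,h]$, which is $0$ via $V=\partial M\times I$ and $W=M\times I$). Hence it suffices to prove $i(g_0,g_1)+i(g_1,g_2)+i(g_2,g_0)=0$ in $R^{\rm inv}_{n+1}$.

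Then I would set $W\definedas M\times S$ and label the six edges of the hexagon cyclically $a_0,c_0,a_1,c_1,a_2,c_2$, with $c_i$ joining the ``right'' end of $a_i$ to the ``left'' end of $a_{i+1}$ (indices mod $3$). Rounding the corners of $W$ — a manifold with corners over $\partial^2 S$ and over $\partial M\times\partial S$, treated with the machinery of Section~\ref{subsec_manifolds_with_corners} — its boundary is the union of the three faces $M\times a_i$ (each diffeomorphic after rounding to $\widetilde{M\times I}$) with $V\definedas(\partial M\times S)\cup(M\times c_0)\cup(M\times c_1)\cup(M\times c_2)$, glued along the $\partial(M\times a_i)$. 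I would equip $V$ with the metric $H$ given by the product $h+\sigma$ on $\partial M\times S$ and the product $g_{i+1}+ds^2$ along the $c_i$--direction on $M\times c_i$ (with $g_3\definedas g_0$). This labelling is forced by requiring that the induced metric on each $\partial(M\times a_i)$ be the one defining $i(g_i,g_{i+1})$ — namely $g_i$ on one end, $h+dt^2$ on $\partial M\times I$, and $g_{i+1}$ on the other end — and the crucial point is that the two $c$--edges abutting a fixed copy of $M$ carry the \emph{same} metric $g_j$, so that $H$ on each $M\times c_i$ is a genuine product of $(M,g_{i+1})$ with an interval. Choosing the edges $a_i$ long makes each boundary metric $g_i\cup(h+dt^2)\cup g_{i+1}$ invertible (the ``stretch the interval $I$'' device from the definition of $i$), so that $(M\times a_i,\,g_i\cup(h+dt^2)\cup g_{i+1})$ represents $i(g_i,g_{i+1})$ in $R^{\rm inv}_{n+1}$; and choosing $S$ long and thin near the $c_i$ exhibits $V$ as $\partial M\times S$ with the pieces $M\times c_i$ attached along long cylindrical necks with cross section $\partial M\times c_i$.

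The one substantive point — the expected main obstacle — is that $H\in\Rinv(V)$. Since $h\in\Rinv(\partial M)$ and each $g_{i+1}\in\Rinv(M)$, both $\partial M\times S$ and each $M\times c_i$, once completed with cylindrical ends, have invertible Dirac operator, being products with a factor that does; and the neck cross sections $\partial M\times c_i$ are likewise invertible after attaching cylinders. Hence the same neck--stretching argument used in the proof of Theorem~\ref{thm_handle_attachment}, together with Proposition~\ref{propbdrygap_corners} to control the essential spectrum, shows that for a sufficiently stretched hexagon the glued metric on $V$ is invertible and restricts to the three prescribed metrics on $\partial V$. Finally $W=M\times S$ — which needs no metric, only its spin structure — is a spin manifold with $\partial W=\bigsqcup_i(M\times a_i)\ \cup_{\partial}\ V$, so $(V,W)$ is exactly the data witnessing that the disjoint union of the three faces is trivial in $R^{\rm inv}_{n+1}$; reading off the classes of the faces gives $i(g_0,g_1)+i(g_1,g_2)+i(g_2,g_0)=0$, which by the first paragraph is the claim. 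Apart from verifying invertibility of the stretched metric on $V$, the only care needed is the bookkeeping of spin structures and orientations across the corner rounding, which is handled by the corner spectral theory already set up.
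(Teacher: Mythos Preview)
Your proof is correct and follows essentially the same hexagon/pair-of-pants construction as the paper: your $V=(\partial M\times S)\cup\bigcup_i(M\times c_i)$ and $W=M\times S$ (after corner rounding) coincide with the paper's $(V,H)$ and $W=\widetilde{M\times D^2}$. The only presentational differences are that you establish the skew-symmetry $i(g_2,g_0)=-i(g_0,g_2)$ up front via the orientation-reversing flip of $I$ (the paper instead specializes $g_1=g_0$ in the cyclic identity at the end), and you spell out the neck-stretching argument for $H\in\Rinv(V)$ that the paper leaves implicit.
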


\begin{proof}
Since $h \in \Rinv(\partial M)$ we have that 
$h + \sigma \in \Rinv(\partial M \times S)$.
Attach $(M \times I, g_i + dt^2)$, $i=0,1,2$, to 
$(\partial M \times S, h + \sigma )$ as in Figure
\ref{figure_CONC_LEMMA_i_ADD}. 
This gives $(V,H)$ in the equivalence
relation for $R^{\rm inv}_{n+1}$.
\begin{figure}[h]
\begin{lpic}[]{FIGS/CONC_LEMMA_i_ADD(0.25)}
\lbl[]{0,110;{\small $(M \times I, g_0 + dt^2)$}}
\lbl[]{290,140;{\small $(M \times I, g_1 + dt^2)$}}
\lbl[]{290,70;{\small $(M \times I, g_2 + dt^2)$}}
%\lbl[]{115,90,30;{\small $\xrightarrow{\phantom{XXX}}$}}
\lbl[]{135,20;{\small $(\partial M \times S, h + \sigma)$}}
\end{lpic}
\caption{}
\label{figure_CONC_LEMMA_i_ADD}
\end{figure}
If we glue this manifold with three copies of 
$\widetilde{M \times I}$ we get a closed manifold diffeomorphic to 
$\partial M \times D^2 \cup M \times S^1$, which is the boundary of 
$M \times D^2$ with corner rounded. We set $W$ in the equivalence
relation for $R^{\rm inv}_{n+1}$ to be $M \times D^2$ with corner rounded.
With $(V,H)$ and $W$ chosen like this we conclude that 
$i(g_0, g_1) + i(g_1, g_2) + i(g_2, g_0) = 0$ in $R^{\rm inv}_{n+1}$.
Setting $g_1 = g_0$ we see that $i(g_2,g_0) = - i (g_0,g_2)$, and the
claim of the Lemma follows.
\end{proof}

\begin{proof}[Proof of Theorem~\ref{thm_exist_class}, (2)]
We define the pairing $i$ by 
$i([g_0], [g_1]) \definedas i(g_0, g_1)$. 
From Lemma~\ref{lemma_i_conc} and Lemma~\ref{lemma_i_add} it follows
that $i$ is well-defined and satisfies the addition property. 

We now prove that $i_{[g_0]}$ is injective. Suppose that
$i_{[g_0]} ([g_1]) = i_{[g_0]} ([g_2])$, then 
\[\begin{split}
0 
&= - i_{[g_0]} ([g_1]) + i_{[g_0]} ([g_2]) \\
&= - i( [g_0], [g_1] ) + i( [g_0], [g_2] ) \\
&= i( [g_1], [g_0] ) + i( [g_0], [g_2] ) \\
&= i( [g_1], [g_2] ).
\end{split}\]

If the interval $I$ is long enough the metric 
$g_1 \cup h + dt^2 \cup g_2$ has invertible Dirac operator on 
$(-M) \cup \partial M \times I \cup M$. Since $i( [g_1], [g_2] ) 
= [\widetilde{M \times I}, g_1 \cup h + dt^2 \cup g_2 ] = 0$ it
follows from part (1) of Theorem~\ref{thm_exist_class} that the metric 
$g_1 \cup h + dt^2 \cup g_2$ extends to a metric $G \in 
\Rinv (\widetilde{M \times I} \rel (g_1 \cup h + dt^2 \cup g_2))$, 
again for all sufficiently long intervals $I$.

We use the metric $\sigma$ to reintroduce the corners in 
$\widetilde{M \times I}$, see Figure
\ref{figure_CONC_THM_CLASS_INJ}. After suitable 
stretching of the product structures normal to the attaching boundaries
this will give a metric with invertible Dirac operator, that is a
concordance from $g_1$ to $g_2$. We conclude that $[g_1] = [g_2]$, and
$i_{[g_0]}$ is injective. 

\begin{figure}[h]
\begin{lpic}[]{FIGS/CONC_THM_CLASS_INJ(0.25)}
\lbl[]{30,110;{\small $(\widetilde{M \times I}, G)$}}
\lbl[]{180,205;{\small $(M \times I, g_1 + dt^2)$}}
\lbl[]{180,3;{\small $(M \times I, g_2 + dt^2)$}}
%\lbl[]{230,140,30;{\small $\xleftarrow{\phantom{XXX}}$}}
\lbl[]{270,110;{\small $(\partial M \times S, h + \sigma)$}}
\end{lpic}
\caption{}
\label{figure_CONC_THM_CLASS_INJ}
\end{figure}

Next we prove surjectivity of $i_{[g_0]}$. By Proposition
\ref{prop_B^n_repr} we know that any element of $R^{\rm inv}_{n+1}$ can
be represented as $[B^{n+1},q]$ for some $q \in \Rinv(S^n)$. We must
find $g_1 \in \Rinv(M \rel h)$ such that 
$i_{[g_0]}([g_1]) = i([g_0], [g_1]) = [B^{n+1},q]$.

Remove an open ball from the interior of $M \times I$ and denote the
remaining manifold by $C$. We then have $M \times I 
= B^{n+1} \cup_{S^n} C$. Removing the open ball does not change 
$M \times \{ 1 \} \hookrightarrow M \times I$ being a
$1$-equivalence, so also $M \times \{ 1 \} \hookrightarrow C$ is a
$1$-equivalence. By the Extension Theorem~\ref{thm_extension} we can
extend the metric $g_0 \cup q$ on $M \times \{ 0 \} \cup S^n$ to a
metric $G$ with invertible Dirac operator on $C$. 
Set $g_1 = G|_{M \times \{1\}}$, then 
\[
i([g_0], [g_1]) 
= [\widetilde{M \times I}, g_0 \cup h + dt^2 \cup g_1 ]
= [B^{n+1},q]
\]
by Lemma~\ref{lemma_extension}.
\end{proof}

%%%%%%%%%%%%%%%%%%%%%%%%%%%%%%%%%%%%%%%%%%%%%%%%%%%%%%%%%%%%%%%%%
\subsection{The $R^{\rm inv}_n$ groups and the index}
%%%%%%%%%%%%%%%%%%%%%%%%%%%%%%%%%%%%%%%%%%%%%%%%%%%%%%%%%%%%%%%%%

Using the index of the Dirac operator we can conclude that the group 
$R^{\rm inv}_n$ is non-trivial in certain dimensions. Following Bunke 
\cite{bunke_95} and Stolz \cite{stolz_xx} we sketch the definition 
of the index map 
\[
\theta: R^{\rm inv}_n \to KO_n.
\]

For $[M,h] \in R^{\rm inv}_n$ we extend the metric $h$ to a metric $g$ 
on all of $M$. We view the Dirac operator $D^g$ as a $Cl_n$-linear 
operator on $L^2(\Sigma M_{\infty})$. Let $\chi: \mR \to [-1,1]$ be an 
increasing, odd, smooth function which is constant $\pm 1$ outside a 
bounded interval the size of which is related to the spectral gap of 
$D^h$ on $\partial M$. The pair $(L^2(\Sigma M_{\infty}), \chi(D) )$ 
is then a Kasparov module representing 
$\theta([M,h]) \in KK(\mR,Cl_n) = KO_n$. 
For details, see Section~9 of \cite{stolz_xx}. From Theorem~1.2 of 
\cite{bunke_95} it follows that $\theta: R^{\rm inv}_n \to KO_n$ is 
well-defined.

For a compact manifold $M$ without boundary the index map coincides
with the ordinary index, $\theta([M,-]) = \alpha(M)$. Since $\alpha$ is 
surjective we conclude that $\theta$ is also surjective. Further, if 
$KO_n$ is non-trivial then $R^{\rm inv}_n$ is also non-trivial.

From this observation we get a result on existence of metrics with
harmonic spinors, see Hitchin \cite{hitchin_74} and B\"ar \cite{baer_96} 
for the case of closed manifolds. 

\begin{thm}
Let $M$ be a spin manifold with boundary, $\dim M = n$ and 
$h \in \Rinv(\partial M)$. Assume $n$ is such that $R^{\rm inv}_{n+1}$ 
is non-trivial, for example $n \equiv 0,1,3,7 \mod 8$. Then
there is a metric on $M$ which extends $h$ and has non-trivial 
harmonic $L^2$-spinors.
\end{thm}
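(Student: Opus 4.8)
The plan is to reduce the statement to two facts: first, that for \emph{any} metric $g$ on $M$ with $\partial g = h$ the Dirac operator $D^g$ on $M_\infty$ has its essential spectrum bounded away from $0$, so that non-invertibility of $D^g$ is equivalent to the existence of a nonzero harmonic $L^2$-spinor; and second, that under the hypothesis on $R^{\rm inv}_{n+1}$ not every such extension can have invertible Dirac operator. Granting these, one simply picks a non-invertible extension and is done.

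For the first point: since $h \in \Rinv(\partial M)$ and $\partial M$ is a closed manifold, $D^h$ has discrete spectrum with a gap $(-\Lambda,\Lambda)$, $\Lambda>0$, around zero. Because $g = h + dt^2$ on the cylindrical end, Proposition~\ref{propbdrygap}(1) gives that the essential spectrum of $D^g$ on $M_\infty$ is exactly $(-\infty,-\Lambda]\cup[\Lambda,\infty)$. Hence if $0\in\spec D^g$ it must be an eigenvalue, producing a nonzero $\phi\in C^\infty(\Sigma M_\infty)\cap L^2$ with $D^g\phi=0$, which is the asserted harmonic $L^2$-spinor.

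For the second point I would argue by contradiction. Suppose every metric extending $h$ (with the standard product collar) has invertible Dirac operator, i.e.\ $\Rinv(M\rel h)=\mathcal{R}(M\rel h)$. This set is nonempty: extend $h$ across a collar as $h+dt^2$ and then arbitrarily into the interior. It is also path-connected, since after shrinking the two collars to a common width any two of its elements agree with $h+dt^2$ on a common collar, so the affine segment joining them is again a Riemannian metric with product collar, hence stays in $\mathcal{R}(M\rel h)$. Such a path is an isotopy inside $\Rinv(M\rel h)$, and stretching an isotopy of metrics with invertible Dirac operator yields a concordance (compare \cite[Proposition~2.3]{dahl_08} and the discussion in Section~\ref{subsec_manifolds_with_corners}), so $\conc\Rinv(M\rel h)$ consists of a single concordance class. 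But by Theorem~\ref{thm_exist_class}(2) the group $R^{\rm inv}_{n+1}$ acts freely and transitively on this one-point set, which forces $R^{\rm inv}_{n+1}$ to be trivial, contradicting the hypothesis. Therefore some $g\in\mathcal{R}(M\rel h)$ is not invertible, and by the first point it carries a nontrivial harmonic $L^2$-spinor.

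The main obstacle is really only bookkeeping rather than analysis: Theorem~\ref{thm_exist_class}(2) is stated for connected $M$ of dimension $n\ge 4$, which is the regime intended here, so the sample dimensions $n\in\{0,1,3\}$ in the statement fall outside this argument and would be excluded or treated separately. A minor technical point is making sure the ``product near the boundary'' convention is compatible with the convex-combination path, which is precisely why one first matches the collar widths. Everything else rests on Proposition~\ref{propbdrygap} and the concordance classification Theorem~\ref{thm_exist_class}, with no new ingredients needed.
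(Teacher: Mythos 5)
Your proof is correct and takes essentially the same route as the paper: both use Theorem~\ref{thm_exist_class}(2) to see that a nonempty $\Rinv(M\rel h)$ must have more than one concordance class, while $\mathcal{R}(M\rel h)$ is path-connected, so the two cannot coincide. You are merely more explicit about the two ingredients the paper leaves implicit --- that the essential spectrum gap from Proposition~\ref{propbdrygap} converts noninvertibility into an actual $L^2$-eigenspinor, and that the convexity argument for path-connectedness requires matching collar widths --- and you rightly flag the implicit restriction to $n\geq 4$ and connected $M$ inherited from the classification theorem.
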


\begin{proof}
If $\Rinv(M \rel h)$ is empty then all metrics in 
${\mathcal R}(M \rel h)$ have non-trivial harmonic spinors. If 
$\Rinv(M \rel h)$ is non-empty it must have several components 
by Theorem~5.3~(2), so $\Rinv(M \rel h) \neq {\mathcal R}(M \rel h)$. 
\end{proof}

Inspired by a similar conjecture for the case of positive scalar 
curvature metrics, \cite[Conjecture~5.7]{rosenberg_stolz_01}, we 
make the following conjecture.

\begin{conjecture} \label{conj_thetainv}
The index map $\theta: R^{\rm inv}_n \to KO_n$ is injective.
\end{conjecture}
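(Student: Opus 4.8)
The plan is to follow the blueprint of Stolz's proof of the analogous statement in the positive scalar curvature setting \cite{stolz_95}, \cite{stolz_xx}, taking advantage of the fact that in the invertible Dirac case handle attachments of codimension $2$ are permitted (Theorem~\ref{thm_handle_attach_corners}). First I would reduce to a statement about a single manifold. By Proposition~\ref{prop_B^n_repr}, for $n\ge 5$ the inclusion of balls induces an isomorphism $R^{\rm inv}_n\cong\conc\Rinv(S^{n-1})$, under which every class has the form $[B^n,q]$ with $q\in\Rinv(S^{n-1})$ and which sends $[B^n,q]$ to the concordance class of $q$. Hence $\theta$ is injective precisely when the following holds: if $q\in\Rinv(S^{n-1})$ has vanishing relative index $\theta([B^n,q])\in KO_n$, then $q$ extends to a metric with invertible Dirac operator on $B^n$. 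Since codimension $2$ surgeries do not change classes in $R^{\rm inv}$, one may further arrange all auxiliary bordisms appearing below to be simply connected, reducing the whole problem to the simply connected range $n\ge 6$ (low dimensions would need separate, ad hoc treatment).

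The second step is to upgrade $\theta$ to a natural transformation of generalized homology theories. One would show that $\Omega^{\rm inv}_\ast$, $\Ospin_\ast$ and $R^{\rm inv}_\ast$ are the homotopy groups of a cofiber sequence of spectra whose associated long exact sequence is the geometric sequence $\cdots\to R^{\rm inv}_{n+1}\to\Oinv_n\to\Ospin_n\to R^{\rm inv}_n\to\cdots$ displayed in Section~\ref{Rngroups}, and that this maps to the Bott exact sequence, i.e.\ the cofiber sequence of the Atiyah--Bott--Shapiro orientation $\mathbf{ko}\to\mathbf{KO}$, compatibly with $\alpha$ on $\Ospin_\ast$ and with $\theta$ on $R^{\rm inv}_\ast$. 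The surgery result Theorem~\ref{thm_handle_attach_corners} is exactly what should identify the image of $i\colon\Oinv_n\to\Ospin_n$ with $\ker\alpha$ (every $\alpha$-trivial simply connected spin manifold being invertible-Dirac-nullbordant through codimension $\ge 2$ handles), and an analogous statement for the relative groups; a five-lemma comparison with the homotopy-theoretic sequence would then force $\theta$ to be an isomorphism in the simply connected range, in particular injective. Rationally this comparison is formal, since the $\widehat A$-genus detects everything on both sides, so the genuine content is $2$-local.

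The hard part — and the reason the statement is only a conjecture — is the analytic converse to the index obstruction hidden in the previous paragraph: given a simply connected spin manifold $W$ of dimension $\ge 6$ with $\alpha(W)=0$ and an invertible Dirac metric on its (possibly empty) boundary, one must actually \emph{produce} an invertible Dirac metric on $W$ extending it. Using Theorem~\ref{thm_handle_attach_corners} one can reduce $W$ by handle attachments of codimension $\ge 2$ to a connected sum of a controlled list of ``building blocks'' representing generators of $\Ospin_\ast$; the blocks with nonzero $\alpha$ are obstructed, as they must be, and the task is to realize invertibility on the $\alpha$-trivial generators. In Stolz's positive scalar curvature argument these are handled via total spaces of $\mathbb{HP}^2$-bundles carrying known positive scalar curvature metrics; here one would instead need explicit invertible Dirac (but not necessarily positive scalar curvature) metrics on a generating set, or some surgery-stable ``vanishing index implies invertible Dirac'' theorem for simply connected manifolds. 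I expect this construction to be the main obstacle — it is open even in the positive scalar curvature case (\cite[Conjecture~5.7]{rosenberg_stolz_01}) — though the larger supply of available handle attachments gives some reason to hope that the invertible Dirac version is more tractable.
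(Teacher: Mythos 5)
This statement is stated in the paper as a \emph{conjecture}; the paper offers no proof of it, and indeed explicitly models it on the still-open positive scalar curvature analogue \cite[Conjecture~5.7]{rosenberg_stolz_01}. Your proposal is therefore not, and does not claim to be, a proof: it is a strategy outline, and the gap you flag in your own final paragraph is exactly where the entire content of the conjecture lives. Concretely, the five-lemma comparison you envisage between the geometric sequence $\cdots\to R^{\rm inv}_{n+1}\to\Oinv_n\to\Ospin_n\to R^{\rm inv}_n\to\cdots$ and the Bott sequence requires identifying the image of $i\colon\Oinv_n\to\Ospin_n$ with $\ker\alpha$ \emph{and} the analogous identification for the relative groups; the latter is precisely the statement that a boundary metric $q\in\Rinv(S^{n-1})$ with $\theta([B^n,q])=0$ extends to an invertible Dirac metric on $B^n$, i.e.\ it is the conjecture itself in the normal form given by Proposition~\ref{prop_B^n_repr}. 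Without new analytic input the argument is circular. Note also that even the absolute identification is weaker than what is needed: Stolz's theorem (simply connected, $\dim\ge 5$, $\alpha=0$ implies positive scalar curvature, hence invertible Dirac) settles the closed-manifold step, and yet the injectivity of the corresponding map $R_n\to KO_n$ in the positive scalar curvature setting remains open --- which shows that the relative/concordance step is genuinely harder than the existence step and is not a routine consequence of Theorem~\ref{thm_handle_attach_corners} plus building blocks.

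Two smaller remarks. First, the conjecture asserts only injectivity; surjectivity of $\theta$ is already established in the paper from surjectivity of $\alpha$, so aiming for ``isomorphism'' via the five lemma is consistent but not required. Second, your reduction to the simply connected case via codimension~$2$ surgeries and Proposition~\ref{prop_B^n_repr} is sound for $n\ge 5$ and is a genuine simplification over the positive scalar curvature setting (no fundamental group decoration on $R^{\rm inv}_n$); this part of your outline is a reasonable and correct use of the paper's machinery. But as it stands the proposal establishes nothing beyond reformulations of the conjecture, and the paper itself leaves the statement open.
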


%It is also possible to state a ``stable'' version of this 
%conjecture where the groups $R^{\rm inv}_n$ are made $8$-periodic in $n$
%by identifying a manifold and its products with powers of 
%the Bott manifold. 

Injectivity of the index map means that $h \in \Rinv(\partial M)$ 
extends to a metric in $\Rinv(\partial M)$ if and only if the index
$\theta([M,h])$ vanishes.

%%%%%%%%%%%%%%%%%%%%%%%%%%%%%%%%%%%%%%%%%%%%%%%%%%%%%%%%%%%%%%%%%
\section{Genericity of metrics with invertible Dirac operator}
\label{section_genericity}
%%%%%%%%%%%%%%%%%%%%%%%%%%%%%%%%%%%%%%%%%%%%%%%%%%%%%%%%%%%%%%%%%

From the surgery theorem for the Dirac operator on closed manifolds, 
Theorem~\ref{am_da_hu_09}, it follows that generic metrics on a closed 
manifold have the minimal dimension allowed by the index theorem, 
see \cite[Theorem~1.1]{ammann_dahl_humbert_09}. In particular, if 
the index vanishes then a generic metric has invertible Dirac 
operator. The proof uses the fact that if there is one minimal metric 
then generic metrics are minimal, surgery can then be used to produce 
one such metric on a given manifold.

Here the term generic means that the subset of minimal metrics is open
in the $C^1$-topology and dense in the $C^\infty$-topology on the set 
of all Riemannian metrics.

Our goal is to obtain a similar statement for manifolds with boundary.
We begin by proving that if there is one metric with invertible Dirac 
operator then generic metrics with the same boundary have invertible
Dirac operator. 

\begin{prop} \label{open_dense} 
Assume that $\Rinv(M \rel h)$ is nonempty. Then $\Rinv(M \rel h)$ is 
open with respect to the $C^1$-topology and dense with respect to the 
$C^\infty$-topology in ${\mathcal R}(M \rel h)$. 
%Moreover, for any compact subset $K$ of the interior of $M$ and 
%any $\bar{g} \in \mathcal{R}(M \rel h)$ the set 
%$\{ g\in \Rinv (M\rel h)\ |\ g=\bar{g} \rm{\ on\ } M\setminus K\}$ is 
%non-empty  and $C^1$-open and $C^\infty$-dense in 
%$\{ g\in \mathcal{R} (M\rel h)\ 
%|\ g=\bar{g} \rm{\ on\ } M\setminus K\}$.
\end{prop}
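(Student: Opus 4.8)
The plan is to treat openness and density separately, in both cases perturbing only the interior of $M$ so that the conditions $\partial g=h$ and product-near-$\partial M$ are preserved automatically. The observation underlying both parts is that, since the boundary metric is frozen to $h\in\Rinv(\partial M)$, every cylindrical end of every $g\in\mathcal R(M\rel h)$ is isometric to $(\partial M\times[0,\infty),h+dt^2)$; hence by Proposition~\ref{propbdrygap}~(1) the essential spectrum of $D^g$ on $M_\infty$ equals $(-\infty,-\Lambda]\cup[\Lambda,\infty)$ for one and the same $\Lambda>0$, and $g\in\Rinv(M\rel h)$ is equivalent to $0$ not being an eigenvalue, with $0$ always isolated and of finite multiplicity in the spectrum.

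For openness I would fix $g\in\Rinv(M\rel h)$ and $\mu\in(0,\Lambda)$ with $\|D^g\psi\|_{L^2}\ge\mu\|\psi\|_{L^2}$ for all $\psi\in H^1(\Sigma M_\infty,g)$. Given $g'\in\mathcal R(M\rel h)$, the tensor $g-g'$ and its first derivative are supported in a fixed compact subset (the metrics agree on the ends), so by \eqref{boundAB} the bundle maps $A^g_{g'},B^g_{g'}$ in \eqref{compare_spinors_BG} have sup-norm $\le\epsilon$ with $\epsilon\to0$ as $g'\to g$ in $C^1$. If $D^{g'}$ had a nonzero $L^2$ harmonic spinor $\psi'$ (automatically smooth and in $H^1$, exponentially decaying by Proposition~\ref{propbdrygap}), transporting it by the fiberwise isometry $\beta^g_{g'}$ gives $0\ne\psi\in H^1(\Sigma M_\infty,g)$ with ${}^{g}D^{g'}\psi=0$, so by \eqref{compare_spinors_BG} and the equivalence of $\|\cdot\|_{H^1(g)}$ with $\|\cdot\|_{L^2(g)}+\|D^g\cdot\|_{L^2(g)}$ on cylindrical manifolds,
\[
\|D^g\psi\|_{L^2}=\|A^g_{g'}(\nabla^g\psi)+B^g_{g'}(\psi)\|_{L^2}\le C\epsilon\bigl(\|D^g\psi\|_{L^2}+\|\psi\|_{L^2}\bigr),
\]
which for $\epsilon$ small forces $\|D^g\psi\|_{L^2}\le\tfrac\mu2\|\psi\|_{L^2}$, contradicting the gap. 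Hence a $C^1$-neighbourhood of $g$ in $\mathcal R(M\rel h)$ lies in $\Rinv(M\rel h)$.

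For density I would argue as in \cite[Theorem~1.1]{ammann_dahl_humbert_09}. Nonemptiness of $\Rinv(M\rel h)$ means that $\min_{g\in\mathcal R(M\rel h)}\dim\ker D^g=0$. Take any $g\in\mathcal R(M\rel h)$; if $\dim\ker D^g>0$, pick a normalized harmonic spinor $\phi$, which by continuity (or by \cite{baer_99}) is nonzero on a small open ball $U\subset\operatorname{int}M$. Perturbing inside $U$ keeps us in $\mathcal R(M\rel h)$: for a symmetric $2$-tensor $k$ supported in $U$, the family ${}^{g}D^{g+tk}$ is, via $\beta$, a real-analytic family of (essentially) self-adjoint operators with $t$-independent essential spectrum $(-\infty,-\Lambda]\cup[\Lambda,\infty)$, so Kato's perturbation theory organizes the eigenvalues near $0$ into $\dim\ker D^g$ analytic branches through $0$, whose derivatives at $t=0$ are the eigenvalues of the symmetric form on $\ker D^g$ given by $(\langle\dot D\phi_a,\phi_b\rangle_{L^2})_{a,b}$; the boundary terms in $\dot D$ drop out since $k$ is compactly supported in the interior. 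If this form vanished for every such $k$, the energy--momentum tensor of every harmonic spinor would vanish on $U$, forcing $\phi|_U=0$, a contradiction; so some $k$ drives a branch off $0$, i.e. $\dim\ker D^{g+tk}<\dim\ker D^g$ for all small $t\ne0$. Iterating at most $\dim\ker D^g$ times with perturbations of sizes $\epsilon/2,\epsilon/4,\dots$ produces a metric in $\Rinv(M\rel h)$ within $C^\infty$-distance $\epsilon$ of $g$.

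The routine ingredients are the two perturbation estimates and the constant bookkeeping; the genuine obstacle is the density step, where one must verify that the first variation of the relevant Dirac eigenvalue can be made nonzero by an \emph{interior} metric perturbation (and that the iteration terminates). This is precisely the perturbation lemma of \cite{ammann_dahl_humbert_09}, and the only new feature here is that the cylindrical ends contribute nothing to the spectral analysis because the boundary metric, and hence the essential spectrum, is fixed.
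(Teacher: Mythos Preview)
Your openness argument is correct and is essentially the paper's argument, phrased via a contradiction with a putative harmonic spinor rather than via continuity of $g\mapsto {}^{\overline g}D^g$ in operator norm; the key input (fixed boundary metric $\Rightarrow$ fixed essential spectrum, plus the Bourguignon--Gauduchon comparison with $C^1$-small error) is identical.

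For density you take a genuinely different route from the paper, and there is a gap. The paper fixes one $g_0\in\Rinv(M\rel h)$, joins an arbitrary $g_1$ to it by the linear path $g_t=(1-t)g_0+tg_1$, and uses analyticity of $t\mapsto D_{g_t}$ (Maier) together with a finite-rank reduction to show that $T=\{t:\ker D_{g_t}\neq 0\}$ is discrete; since $0\notin T$ this puts good metrics arbitrarily $C^\infty$-close to $g_1$. The existence of $g_0$ is used globally, not just as an inequality on kernel dimensions.

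Your argument instead tries to decrease $\dim\ker D^g$ by a \emph{local} perturbation, appealing only to the first variation. The step ``$Q_\phi\equiv 0$ on $U$ forces $\phi|_U=0$'' is false: a parallel spinor on a flat ball satisfies $D\phi=0$ and $Q_\phi(X,Y)=\tfrac12\operatorname{Re}\langle X\!\cdot\!\nabla_Y\phi+Y\!\cdot\!\nabla_X\phi,\phi\rangle=0$ while $\phi\neq 0$. So it can happen that the Hermitian form $(\langle\dot D\phi_a,\phi_b\rangle)_{a,b}$ vanishes for every $k$ supported in $U$, and your first-order argument stalls. (This is not pathological: if $g$ happens to be flat near a point and some harmonic spinor is parallel there, you are exactly in this situation.) Analytic perturbation theory still gives you analytic eigenvalue branches, but you have produced no reason why one of them is not identically zero along your chosen direction $k$; for that you need to connect to a metric you already know is good, which is precisely what the linear-path argument does. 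The fix is to replace the local first-variation iteration by the analyticity-along-a-segment argument: it uses the hypothesis $\Rinv(M\rel h)\neq\emptyset$ in an essential way and requires no information about $Q_\phi$.
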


To prove Proposition~\ref{open_dense} we need the following lemma.

\begin{lem} \label{equivalence_metrics} 
Let $g, g' \in\mathcal{R}(M_{\infty})$ with the boundary metrics
$\partial g = \partial g' = h$ on $\partial M$. Then the maps 
$g' \mapsto \Vert \beta_{g'}^g\phi\Vert_{L^2(g')}^2$ 
($g' \mapsto \Vert \beta_{g'}^g\phi\Vert_{H^1(g')}^2$) are 
uniformly continuous in $\phi \in \Sigma^g M$ with respect to the 
$C^0$-topology ($C^1$-topology) on $\mathcal{R}(M \rel h)$.
\end{lem}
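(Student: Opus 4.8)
The plan is to reduce the statement to two pointwise facts: first, the Bourguignon--Gauduchon identification $\beta_{g'}^g$ preserves fiberwise spinor length, $|\beta_{g'}^g\phi|_{g'} = |\phi|_g$; second, since $g$ and $g'$ both lie in $\mathcal{R}(M\rel h)$ they are equal to $h+dt^2$ on the cylindrical end $\partial M\times[0,\infty)$, so $g=g'$ there and all the comparison data below (volume densities, connection-comparison tensors) vanish outside the compact set $M$. Consequently, $C^0$-closeness of $g'$ to $g$ in $\mathcal{R}(M\rel h)$ means $\sup_M|g-g'|_g$ is small, and $C^1$-closeness means in addition $\sup_M|\nabla^g(g-g')|_g$ is small. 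I would then prove the multiplicative estimates $\big|\,\Vert\beta_{g'}^g\phi\Vert_{L^2(g')}^2-\Vert\phi\Vert_{L^2(g)}^2\,\big|\le \epsilon(g,g')\Vert\phi\Vert_{L^2(g)}^2$ and $\big|\,\Vert\beta_{g'}^g\phi\Vert_{H^1(g')}^2-\Vert\phi\Vert_{H^1(g)}^2\,\big|\le \epsilon'(g,g')\Vert\phi\Vert_{H^1(g)}^2$ with $\epsilon(g,g')\to0$ in the $C^0$-topology and $\epsilon'(g,g')\to0$ in the $C^1$-topology; since the prefactors do not depend on $\phi$, this is exactly the asserted uniform continuity in $\phi$.

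For the $L^2$ statement, using that $\beta_{g'}^g$ preserves length,
\[
\Vert\beta_{g'}^g\phi\Vert_{L^2(g')}^2
= \int_{M_\infty} |\phi|_g^2 \, dv^{g'}
= \int_{M_\infty} |\phi|_g^2 \, dv^{g}
  + \int_{M} |\phi|_g^2 \Big(\tfrac{dv^{g'}}{dv^g}-1\Big)\, dv^g ,
\]
so the difference is bounded by $\big(\sup_M|\tfrac{dv^{g'}}{dv^g}-1|\big)\,\Vert\phi\Vert_{L^2(g)}^2$. The density ratio $\tfrac{dv^{g'}}{dv^g}=\sqrt{\det(g^{-1}g')}$ is a pointwise algebraic function of $g'$ equal to $1$ at $g'=g$, hence $\sup_M|\tfrac{dv^{g'}}{dv^g}-1|\to0$ as $g'\to g$ in $C^0$, giving the first claim with a $\phi$-independent modulus.

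For the $H^1$ statement I would write $\Vert\cdot\Vert_{H^1(g)}^2=\Vert\cdot\Vert_{L^2(g)}^2+\Vert\nabla^g\cdot\Vert_{L^2(g)}^2$; the $L^2$-summand is handled as above, so the point is to compare $\Vert\nabla^{g'}\beta_{g'}^g\phi\Vert_{L^2(g')}^2$ with $\Vert\nabla^g\phi\Vert_{L^2(g)}^2$. By the Bourguignon--Gauduchon comparison of Levi-Civita connections on spinors --- the same computation that yields \eqref{compare_spinors_BG} and \eqref{boundAB} --- one has $(\beta_{g'}^g)^{-1}\circ\nabla^{g'}\circ\beta_{g'}^g=\nabla^g+\mathcal{C}_{g'}^g$ with $\mathcal{C}_{g'}^g$ a pointwise bundle map $\Sigma^g M\to T^*M\otimes\Sigma^g M$ satisfying $|\mathcal{C}_{g'}^g|_g\le C(|g-g'|_g+|\nabla^g(g-g')|_g)$, and the metrics on $T^*M$ used to measure $\nabla\phi$ differ by a factor $1+O(|g-g'|_g)$. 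Expanding $|\nabla^g\phi+\mathcal{C}_{g'}^g\phi|^2$ against the perturbed cotangent metric and volume density, then using Cauchy--Schwarz and the triangle inequality, one bounds the difference by a quantity of the form $\epsilon'(g,g')\big(\Vert\phi\Vert_{L^2(g)}^2+\Vert\nabla^g\phi\Vert_{L^2(g)}^2\big)=\epsilon'(g,g')\Vert\phi\Vert_{H^1(g)}^2$ with $\epsilon'(g,g')\to0$ in $C^1$; adding back the $L^2$-summand finishes the proof.

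I expect the only real work, and the main obstacle, to be the bookkeeping in the $H^1$-case: one must verify that every cross term arising from the connection perturbation, the cotangent-metric perturbation, and the volume-density perturbation can be absorbed into $\epsilon'(g,g')\Vert\phi\Vert_{H^1(g)}^2$ with $\epsilon'(g,g')\to0$ and with no residual dependence on $\phi$. This is elementary once the Bourguignon--Gauduchon connection formula is in hand, so I do not anticipate a genuine difficulty. A cosmetic alternative for the $H^1$-case is to replace $\nabla^g$ by $D^g$ throughout, invoking the equivalence of $\Vert\phi\Vert_{H^1(g)}$ with $\Vert\phi\Vert_{L^2(g)}+\Vert D^g\phi\Vert_{L^2(g)}$ recalled earlier and applying \eqref{compare_spinors_BG} and \eqref{boundAB} directly, at the mild cost of checking that the equivalence constants can be kept uniform over a $C^1$-neighbourhood of $g$.
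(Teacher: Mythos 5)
Your proposal is correct and takes essentially the same route as the paper: exploit that $\beta_{g'}^g$ is fiberwise isometric and that $g=g'$ on the cylindrical end, then on the compact piece $M$ compare volume densities for the $L^2$-norm ($C^0$-dependence) and compare connections/Christoffel symbols for the $H^1$-norm ($C^1$-dependence). The paper's proof is just a terser statement of the same reduction; your multiplicative moduli $\epsilon(g,g')$, $\epsilon'(g,g')$ make explicit the ``uniformity in $\phi$'' that the paper leaves implicit.
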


\begin{proof} 
We start with the case of a closed manifold. In local coordinates 
one sees immediately that the volume element depends continuously 
on $g$ in the $C^0$-topology and that the Christoffel symbols depend 
continuously on $g$ in the $C^1$-topology. Since $\beta_{g'}^g$ is 
fiberwise an isometry, the $L^2$-norm ($H^1$-norm) on a single chart 
depends continuously on the $C^0$-topology ($C^1$-topology) on 
$\mathcal{R}(M)$. Hence, the statement is true for closed manifolds. 

The Lemma in general is proven by decomposing $M_\infty$ into 
$M \cup \partial M\times (0,\infty)$. Since $M$ and $\partial M$ 
are compact and the metrics are constant in the 
$(0,\infty)$-direction the lemma follows. 
\end{proof}

From that lemma we get immediately the following corollary.

\begin{cor}
Let $g, g' \in\mathcal{R}(M_{\infty})$ with the boundary metrics
$\partial g = \partial g' = h$ on $\partial M$. 
Then, the norms $\Vert D^{g'}(\beta_{g'}^g .)\Vert_{L^2(g')}$ and 
$\Vert {}^{g\mkern-4mu} D^{g'}.\Vert_{L^2(g)}$ are equivalent.
In particular, 
$D^{g'}: L^2(\Sigma^{g'} M_\infty)\to L^2(\Sigma^{g'} M_\infty)$ is 
invertible if and only if ${}^{g\mkern-4mu} D^{g'}: 
L^2(\Sigma^{g} M_\infty)\to L^2(\Sigma^{g} M_\infty)$ is.
\end{cor}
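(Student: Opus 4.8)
The plan is to derive both assertions straight from the definition of ${}^{g\mkern-4mu}D^{g'}$ together with Lemma~\ref{equivalence_metrics}, using that the hypothesis $\partial g = \partial g' = h$ forces $g = g'$ on the cylindrical end. First I would unwind the definition: since ${}^{g\mkern-4mu}D^{g'}\phi = (\beta_{g'}^g)^{-1}\bigl(D^{g'}(\beta_{g'}^g\phi)\bigr)$ and $\beta_{g'}^g$ together with its inverse preserve the fibrewise length of spinors, interchanging $|\cdot|_g$ and $|\cdot|_{g'}$, one obtains the pointwise identity
\[
|{}^{g\mkern-4mu}D^{g'}\phi|_g = |D^{g'}(\beta_{g'}^g\phi)|_{g'}
\]
on all of $M_\infty$, for every $\phi \in \Sigma^g M$.

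Next, the two $L^2$-norms in the statement arise by integrating this common pointwise density against $dv^g$ and $dv^{g'}$ respectively, so the norm equivalence reduces to comparing the two volume forms. Here I would split $M_\infty = M \cup (\partial M \times [0,\infty))$ exactly as in the proof of Lemma~\ref{equivalence_metrics}: on the cylindrical end both metrics equal $h + dt^2$, hence $dv^g = dv^{g'}$ there, while on the compact core $M$ the function $dv^g/dv^{g'}$ is continuous and strictly positive, hence pinched between two positive constants. This gives a constant $c = c(g,g') > 0$, independent of $\phi$, with
\[
c^{-1}\,\Vert {}^{g\mkern-4mu}D^{g'}\phi\Vert_{L^2(g)}
\;\le\; \Vert D^{g'}(\beta_{g'}^g\phi)\Vert_{L^2(g')}
\;\le\; c\,\Vert {}^{g\mkern-4mu}D^{g'}\phi\Vert_{L^2(g)},
\]
which is the claimed equivalence (and is indeed immediate from Lemma~\ref{equivalence_metrics}, since the same volume comparison also underlies it).

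For the ``in particular'' I would observe that the very same comparison makes $\beta_{g'}^g$ a bounded linear isomorphism of $L^2(\Sigma^g M_\infty, g)$ onto $L^2(\Sigma^{g'} M_\infty, g')$ with bounded inverse. Since ${}^{g\mkern-4mu}D^{g'} = (\beta_{g'}^g)^{-1}\circ D^{g'}\circ \beta_{g'}^g$ is the conjugate of $D^{g'}$ by this isomorphism, and invertibility with bounded inverse is preserved under such conjugation, $D^{g'}$ on $L^2(\Sigma^{g'} M_\infty)$ is invertible exactly when ${}^{g\mkern-4mu}D^{g'}$ on $L^2(\Sigma^{g} M_\infty)$ is. Alternatively one may combine the norm equivalence just established with the one from Lemma~\ref{equivalence_metrics} and the characterisation recalled above that the self-adjoint $D^{g'}$ has a bounded inverse if and only if $\Vert D^{g'}\psi\Vert_{L^2(g')} \ge c\Vert\psi\Vert_{L^2(g')}$ for some $c>0$; the two norm equivalences transport this lower bound between the two operators.

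I expect no genuine obstacle: as the text says, the corollary is ``immediate''. The only point that uses the hypothesis $\partial g = \partial g'$ in an essential way is the noncompactness of $M_\infty$, which is harmless precisely because the two metrics coincide outside the compact core, so every comparison reduces to the compact manifolds $M$ and $\partial M$ exactly as in Lemma~\ref{equivalence_metrics}; without that hypothesis the ratio $dv^g/dv^{g'}$ need not stay bounded on the ends and the argument would break.
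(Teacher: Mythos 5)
Your proof is correct and follows the same route the paper intends: the paper gives no explicit argument, declaring the corollary immediate from Lemma~\ref{equivalence_metrics}, and your unwinding — the pointwise identity $|{}^{g\mkern-4mu}D^{g'}\phi|_g = |D^{g'}(\beta_{g'}^g\phi)|_{g'}$ coming from the fibrewise isometry $\beta_{g'}^g$, followed by the volume-form comparison that reduces to the compact core because $g$ and $g'$ agree on the cylindrical end — is exactly the mechanism underlying that lemma. The conjugation argument for the invertibility claim (with the $H^1$ part of Lemma~\ref{equivalence_metrics} guaranteeing that $\beta_{g'}^g$ respects the domains) is also sound and is a clean way to phrase what the norm equivalences give.
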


\begin{proof}[Proof of Proposition~\ref{open_dense}]
Metrics in $\mathcal{R}(M \rel h)$ are the same on the cylindrical
end, so the essential spectrum is also the same for such
metrics. Since we assume $\Rinv(M \rel h)$ to be nonempty, the
essential spectrum for each metric in $\mathcal{R}(M \rel h)$ is 
$(-\infty, -\Lambda] \cup [\Lambda, \infty)$ where $\Lambda > 0$ 
is the absolute value of the lowest eigenvalue of $D^h$, see 
Proposition~\ref{propbdrygap}. This means that on 
$(-\Lambda, \Lambda)$ the spectrum of any metric in 
$\mathcal{R}(M \rel h)$ is discrete and the dimension of the kernel 
is finite, which allows to carry over the proof from the case of 
closed manifolds, see \cite[Proposition~3.1]{maier_97}.

Due to the corollary above it is enough to examine invertibility of 
the operator ${}^{\overline{g}\mkern-4mu} D^{g}$ for a fixed background 
metric $\overline{g} \in \mathcal{R}(M \rel h)$.

First, one shows that the map $g \mapsto {}^{\overline{g}\mkern-4mu} D^{g}$ 
from $\mathcal{R}(M \rel h)$ to 
$\mathcal{B}(H^1(\overline{g}), L^2(\overline{g}))$ is
continuous in the $C^1$-topology on $\mathcal{R}(M \rel h)$. Here
$\mathcal{B}(H^1(\overline{g}), L^2(\overline{g}))$ denotes the space
of bounded linear operators from $H^1(\overline{g})$ to 
$L^2(\overline{g})$. That ${}^{\overline{g}\mkern-4mu} D^{g} \in 
\mathcal{B}(H^1(\overline{g}), L^2(\overline{g}))$ follows immediately
from the estimate 
\[ \begin{split}
\Vert {}^{\overline{g}\mkern-4mu} D^{g} \phi \Vert_{L^2(\overline{g})} 
&\leq
a\Vert D^g (\beta_{\overline{g}}^g\phi)\Vert_{L^2(g)} 
\leq 
an\Vert \nabla^g (\beta_{\overline{g}}^g\phi) \Vert_{L^2(g)} \\
&\leq an \Vert \beta_{\overline{g}}^g\phi \Vert_{H^1(g)}
\leq 
abn \Vert \phi \Vert_{H^1(\overline{g})}
\end{split} \]
where $a,b$ are constants coming from the equivalence of the norms with 
respect to different metrics, see Lemma~\ref{equivalence_metrics}.

Moreover, if $g\in \Rinv(M \rel h)$ there is a neighbourhood of 
${}^{\overline{g}\mkern-4mu} D^{g}$ with respect to the norm topology on 
$\mathcal{B}(H^1(\overline{g}), L^2(\overline{g}))$ such that all 
operators in this neighbourhood are also invertible. This is deduced 
from the following estimate. If $\epsilon$ is small enough and 
$A \in \mathcal{B}(H^1(\overline{g}), L^2(\overline{g}))$ lies in the 
$\epsilon$-neighbourhood of ${}^{\overline{g}\mkern-4mu} D^{g}$, we have 
\[ \begin{split}
\Vert A\phi\Vert_{L^2(\overline{g})}
&\geq 
\Vert {}^{\overline{g}\mkern-4mu} D^{g}\phi 
- ({}^{\overline{g}\mkern-4mu} D^{g}-A)\phi\Vert_{L^2(\overline{g})}\\
&\geq
\Vert {}^{\overline{g}\mkern-4mu} D^{g}\phi \Vert_{L^2(\overline{g})}
- \Vert ({}^{\overline{g}\mkern-4mu} D^{g}-A)\phi \Vert_{L^2(\overline{g})}\\ 
&\geq
\Vert {}^{\overline{g}\mkern-4mu} D^{g} \phi\Vert_{L^2(\overline{g})} 
- \Vert {}^{\overline{g}\mkern-4mu} D^{g}-A\Vert \Vert
\phi\Vert_{H^1(\overline{g})}\\ 
&\geq 
\Vert {}^{\overline{g}\mkern-4mu} D^{g} \phi\Vert_{L^2(\overline{g})} 
-\epsilon b (\Vert \phi \Vert_{L^2(\overline{g})} 
+\Vert D^{\overline{g}} \phi\Vert_{L^2(\overline{g})})\\ 
&\geq 
\Vert {}^{\overline{g}\mkern-4mu} D^{g} \phi \Vert_{L^2(\overline{g})} 
- \epsilon b (\Vert \phi\Vert_{L^2(\overline{g})} 
+ a \Vert {}^{\overline{g}\mkern-4mu} D^{g} \phi\Vert_{L^2(\overline{g})}) \\ 
&\geq 
(1 - ab\epsilon)\Vert {}^{\overline{g}\mkern-4mu} D^{g} \phi \Vert_{L^2(\overline{g})} 
-\epsilon b \Vert \phi\Vert_{L^2(\overline{g})} \\ 
&\geq 
(C(1-ab\epsilon)-b\epsilon) \Vert \phi\Vert_{L^2(\overline{g})},
\end{split} \]
where $b$ is the constant describing the equivalence of the norms 
$\Vert \cdot \Vert_{H^1(\overline{g})}$ and 
$\Vert \cdot \Vert_{L^2(\overline{g})} 
+ \Vert D^g (\cdot) \Vert_{L^2(\overline{g})}$, see for example 
\cite[Prop. 2.7]{mueller_96}, $a$ is the constant describing the 
equivalence of the $L^2$-norms of $D^g (\beta_{\overline{g}}^g\phi)$ and 
${}^{\overline{g}\mkern-4mu} D^{g}\phi$, and $C>0$ is the infimum 
of the $L^2$-spectrum of $D^g$. Together with the $C^1$-continuity of 
$g \mapsto D^g$ this shows that $\Rinv(M \rel h)$ is open in 
$\mathcal{R}(M \rel h)$ with respect to the $C^1$-topology. 

Now let $g_0 \in \Rinv(M \rel h)$ and 
$g_1 \in \mathcal{R}(M \rel h)$. Then $g_t = (1-t)g_0 + tg_1$, 
$t\in [0,1]$, is a path in $\mathcal{R}(M \rel h)$. The corresponding 
family of Dirac operators $D_t \definedas D^{g_t}$ is analytic in $t$, 
see \cite[Section~11]{maier_97}. 

We follow the proof of \cite[Proposition~11.4]{maier_97} and show that 
the set $T \definedas \{t \in (0,1) \mid \dim\ker D_t > 0 \}$ is 
discrete from which it follows that $\Rinv(M \rel h)$ is $C^\infty$-dense 
in $\mathcal{R}(M \rel h)$. Assume that $s \not\in T$, that is $D_s$ is 
invertible. Then $t \not\in T$ for all $t$ in a neighbourhood of $s$, 
so $T$ is closed. Let now $s \in \partial T \cap (0,1)$. We have the 
orthogonal splittings $H^1 = K \oplus H$ and $L^2 = C \oplus D$ where 
$K = \ker D_s$ and $C = D_s(H^1)$. Recall that $K$ is finite-dimensional. 
This induces the decomposition 
\[
D_t=
\begin{pmatrix} 
a_t & b_t \\ 
c_t & d_t
\end{pmatrix}.
\] 
Note that $d_t: H \to D$ at is invertible at $s=t$, and thus also for
$t$ near $s$. If $(x_1,x_2) \in \ker D_t$ for $t$ with invertible
$d_t$, then $a_t(x_1)=-b_t(x_2)$ and $c_t(x_1)=-d_t(x_2)$. Thus,
$x_2=-d_t^{-1}\circ c_t(x_1)$ and $R_t(x_1) \definedas 
(b_t \circ d_t^{-1} \circ c_t - a_t)(x_1) = 0$ where $R_t:K \to C$. 
Hence we always have that $\dim \ker D_s \geq \dim \ker D_t$ and in
particular $\dim \ker D_s = \dim \ker D_t$ if and only if 
$R_t \equiv 0$. Assume that there is a half-closed interval 
$I \subset T$ starting or ending at $s$. For $t\in I$ we have 
$\ker D_t \neq \{0\}$ and thus $\det R_t = 0$. But $R_t$ depends 
analytically on $t$ which then implies that $R_t=0$ in the entire 
neighbourhood of $s$ where $d_t$ is invertible. This contradicts 
$s \in \partial T$ since it implies that there is a sequence 
$t_i\to s$ with $\dim \ker D_{t_i}=0$ and hence $\det R_{t_i} \neq 0$.
\end{proof}

From Theorem~\ref{thm_exist_class} we conclude the following.

\begin{thm} \label{thm_generic_metrics}
Let $M$ be an $n$-dimensional spin manifold with boundary, and 
let $h \in \Rinv(\partial M)$. Then $\Rinv(M \rel h)$ is 
generic in ${\mathcal R}(M \rel h)$ if and only if $[M,h] = 0$ 
in $R^{\rm inv}_n$.
\end{thm}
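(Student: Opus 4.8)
The plan is to obtain Theorem~\ref{thm_generic_metrics} as an essentially immediate corollary of the two results already in hand: the genericity statement of Proposition~\ref{open_dense} and the existence half of the classification theorem, Theorem~\ref{thm_exist_class}~(1). No new analysis is needed. First I would record the (trivial but necessary) remark that the ambient space $\mathcal{R}(M \rel h)$ is always nonempty: starting from any metric on $M$ one can deform it near $\partial M$ so that it becomes a product $h + dt^2$ there, which produces an element of $\mathcal{R}(M \rel h)$. Consequently, any subset of $\mathcal{R}(M \rel h)$ that is dense in the $C^\infty$-topology is itself nonempty; in particular, if $\Rinv(M \rel h)$ is generic (i.e. open in $C^1$ and dense in $C^\infty$) then $\Rinv(M \rel h) \neq \emptyset$.

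Given this, both implications are short. For ``$[M,h] = 0 \Rightarrow$ generic'': if $[M,h]$ vanishes in $R^{\rm inv}_n$, then Theorem~\ref{thm_exist_class}~(1) gives that $\Rinv(M \rel h)$ is nonempty, and Proposition~\ref{open_dense} applies verbatim to conclude that $\Rinv(M \rel h)$ is open in the $C^1$-topology and dense in the $C^\infty$-topology in $\mathcal{R}(M \rel h)$, which is exactly the assertion that it is generic. For the converse: if $\Rinv(M \rel h)$ is generic, then by the remark above it is nonempty, and Theorem~\ref{thm_exist_class}~(1) forces $[M,h] = 0$ in $R^{\rm inv}_n$. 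This closes the equivalence.

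I do not expect a genuine obstacle here, since all the real content is carried by the cited results: Proposition~\ref{open_dense} supplies the spectral-continuity and analyticity argument for openness and density, while Theorem~\ref{thm_exist_class}~(1) rests on the handle-attachment Theorem~\ref{thm_handle_attachment} and the $R^{\rm inv}_n$ machinery. The only points requiring care are bookkeeping: \emph{first}, one must agree that ``generic'' is understood to include nonemptiness, which is harmless here precisely because $\mathcal{R}(M \rel h)$ is nonempty; \emph{second}, the hypothesis $n \geq 4$ is inherited from Theorem~\ref{thm_exist_class}, so for the statement as phrased one should either tacitly assume $n \geq 4$ or, in low dimensions, invoke the corresponding low-dimensional facts directly. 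Apart from that, the argument is a two-line combination of the preceding results.
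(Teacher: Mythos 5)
Your proposal is correct and is essentially the same argument the paper has in mind: the paper's entire justification is the sentence ``From Theorem~\ref{thm_exist_class} we conclude the following,'' which tacitly combines Theorem~\ref{thm_exist_class}~(1) (nonemptiness of $\Rinv(M \rel h)$ is equivalent to $[M,h]=0$) with Proposition~\ref{open_dense} (nonemptiness implies genericity) exactly as you do. Your side remarks about the implicit hypotheses $n\geq 4$ and connectedness inherited from Theorem~\ref{thm_exist_class} are a fair observation about the paper's phrasing, not a gap in your proof.
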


If Conjecture~\ref{conj_thetainv} holds, then the metrics with 
invertible Dirac operator are generic if and only if the index 
$\theta([M,h])$ vanishes.

%%%%%%%%%%%%%%%%%%%%%%%%%%%%%%%%%%%%%%%%%%%%%%%%%%%%%%%%%%%%%%%%%
\bibliographystyle{amsplain}
\bibliography{invhandle} 
%%%%%%%%%%%%%%%%%%%%%%%%%%%%%%%%%%%%%%%%%%%%%%%%%%%%%%%%%%%%%%%%%

%%%%%%%%%%%%%%%%%%%%%%%%%%%%%%%%%%%%%%%%%%%%%%%%%%%%%%%%%%%%%%%%%
\end{document}